\DeclareMathOperator{\sech}{sech}
\DeclareMathOperator{\Span}{span}
 \newcommand{\RE}{\mathrm{Re}\,}
\newtheorem{theorem}{Theorem}[section]
 \newtheorem{proposition}[theorem]{Proposition}
 \theoremstyle{definition}
 \newtheorem{definition}[theorem]{Definition}
 \theoremstyle{remark}
 \newtheorem{remark}[theorem]{Remark}
 \numberwithin{equation}{section}
\newcommand{\R}{\mathbb R}
\begin{document}
\maketitle

\centerline{$^1$ Department of Mathematics,
IME-USP}
 \centerline{Rua do Mat\~ao 1010, Cidade Universit\'aria, CEP 05508-090,
 S\~ao Paulo, SP (Brazil)}
 \centerline{\tt angulo@ime.usp.br}
 \centerline{ $^2$ Instituto de Investigaciones en Matem\'aticas Aplicadas
   y en Sistemas,}
 \centerline{Universidad Nacional Aut\'{o}noma de M\'{e}xico,  Circuito Escolar s/n,}
 \centerline{Ciudad Universitaria, C.P. 04510 Cd. de M\'{e}xico (Mexico)}
 \centerline{\tt  plaza@mym.iimas.unam.mx}

\begin{abstract}

The aim of this work is to establish a  instability study for stationary kink and antikink/kink profiles solutions for the sine-Gordon equation on a metric graph with a structure represented by a $\mathcal Y$-junction so-called a Josephson  tricrystal junction. By considering boundary conditions at the graph-vertex of $\delta'$-interaction type, it is shown that these kink-soliton type stationary profiles are linearly (and nonlinearly) unstable. The extension theory of symmetric operators, Sturm-Liouville oscillation results and analytic perturbation theory of operators are fundamental ingredients in the stability analysis. The local well-posedness of the sine-Gordon model in $ H^1(\mathcal Y) \times L^2(\mathcal{Y})$  is also established. The theory developed in this investigation has prospects for the study of the (in)-stability of stationary wave solutions of other configurations for kink-solitons profiles. 
\end{abstract}

\textbf{Mathematics  Subject  Classification (2010)}. Primary
35Q51, 35Q53, 35J61; Secondary 47E05.\\

\textbf{Key  words}.  sine-Gordon model, Josephson tricrystal junction, $\delta'$-type interaction, kink, anti-kink solitons, perturbation theory, extension theory, instability.


\section{Introduction}

Nonlinear dispersive models on quantum star-shaped graphs (quantum graphs, henceforth) arise as simplifications for wave propagation, for instance, in quasi one-dimensional (e.g. meso- or nano-scaled) systems that {\it look like a thin neighborhood of a  graph}. We recall that a star-shaped metric graph, $\mathcal G$, is a structure represented by a finite or countable edges attached to a common vertex, $\nu=0$, having each edge identified with a copy of the half-line, $(-\infty, 0)$ or $(0, +\infty)$ (see Figure 1 below). Hence, a quantum star-shaped metric graph, $\mathcal G$, is a  star-shaped metric graph with a linear Hamiltonian operator (for example, a Schr\"odinger-like operator) suitably defined on functions which are supported on the edges. 

Quantum graphs have been used to describe a variety of physical problems and applications, for instance, condensed matter,  $\mathcal{Y}$-Josephson junction networks, polymers, optics, neuroscience, DNA chains, blood pressure waves in large arteries, or in shallow water models describing a fluid networks (see \cite{Berko17,BurCas01,Caput14,Fid15,Mug15} and the many references therein).  Recently, they have attracted much attention in the context of soliton transport in networks and branched structures since wave dynamics in networks can be modeled by nonlinear evolution equations (see, e.g., \cite{AdaNoj16, AngCav2, AngGol18b, AngGol18a,AnPl-delta,AnPl-deltaprime,MNS,SvG05,Susa19}).

The present study focuses on the dynamics of the one-dimensional  sine-Gordon equation,
\begin{equation}
\label{sine-G}
u_{tt} - c^2 u_{xx} + \sin u = 0,
\end{equation}
posed on a metric graph. The sine-Gordon model appears in a great variety of physical and biological models. For example, it has been used to describe the magnetic flux in a long Josephson line in superconductor theory \cite{BEMS,BaPa82,SCR}, mechanical oscillations of a nonlinear pendulum \cite{Dra83,Knob00} and the dynamics of a crystal lattice near a dislocation \cite{FreKo}. Recently, soliton solutions to equation \eqref{sine-G} have been used as simplified models of scalar gravitational fields in general relativity theory \cite{CFMT19,FCT18} and of oscillations describing the dynamics of DNA chains \cite{DerGa11,IvIv13} in the context of the \emph{solitons in DNA hypothesis} \cite{EKHKL80}. In addition, the sine-Gordon equation \eqref{sine-G} underlies many remarkable mathematical features such as a Hamiltonian structure \cite{TaFa76}, complete integrability \cite{AKNS73,AKNS74} and the existence of localized solutions (solitons) \cite{SCM,Sco03}. 

In recent contributions (cf. \cite{AnPl-delta, AnPl-deltaprime}), we performed the first rigorous analytical studies of the stability properties of stationary soliton solutions of kink and/or anti-kink profiles to the sine-Gordon equation \emph{posed on a $\mathcal{Y}$-junction graph}. There exist two main types of $\mathcal{Y}$-junctions. A $\mathcal{Y}$-junction of the first type (or type I) consists of one incoming (or parent) edge, $E_1 = (-\infty,0)$, meeting at one single vertex at the origin, $\nu = 0$, with other two outgoing (children) edges, $E_j = (0,\infty)$, $j = 2,3$. The second type (or $\mathcal{Y}$-junction of type II) resembles more a starred structure and consists of three identical edges of the form $E_j = (0,\infty)$, $1 \leqq j \leqq 3$. See Figure \ref{figYjunction} for an illustration. Junctions of type I are more common in unidirectional fluid flow models (see, for example, \cite{BoCa08}). The junctions of the second type (which belong to the star graph class) are often referred as \emph{Josephson tricrystal junctions} and are common in network of transmission lines (see, for instance, \cite{SvG05,KCK00,Sabi18}).

\begin{figure}[t]
\begin{center}
\subfigure[$\mathcal{Y} = (-\infty,0) \cup (0,\infty) \cup (0,\infty)$]{\label{figYtipoI}\includegraphics[scale=.4, clip=true]{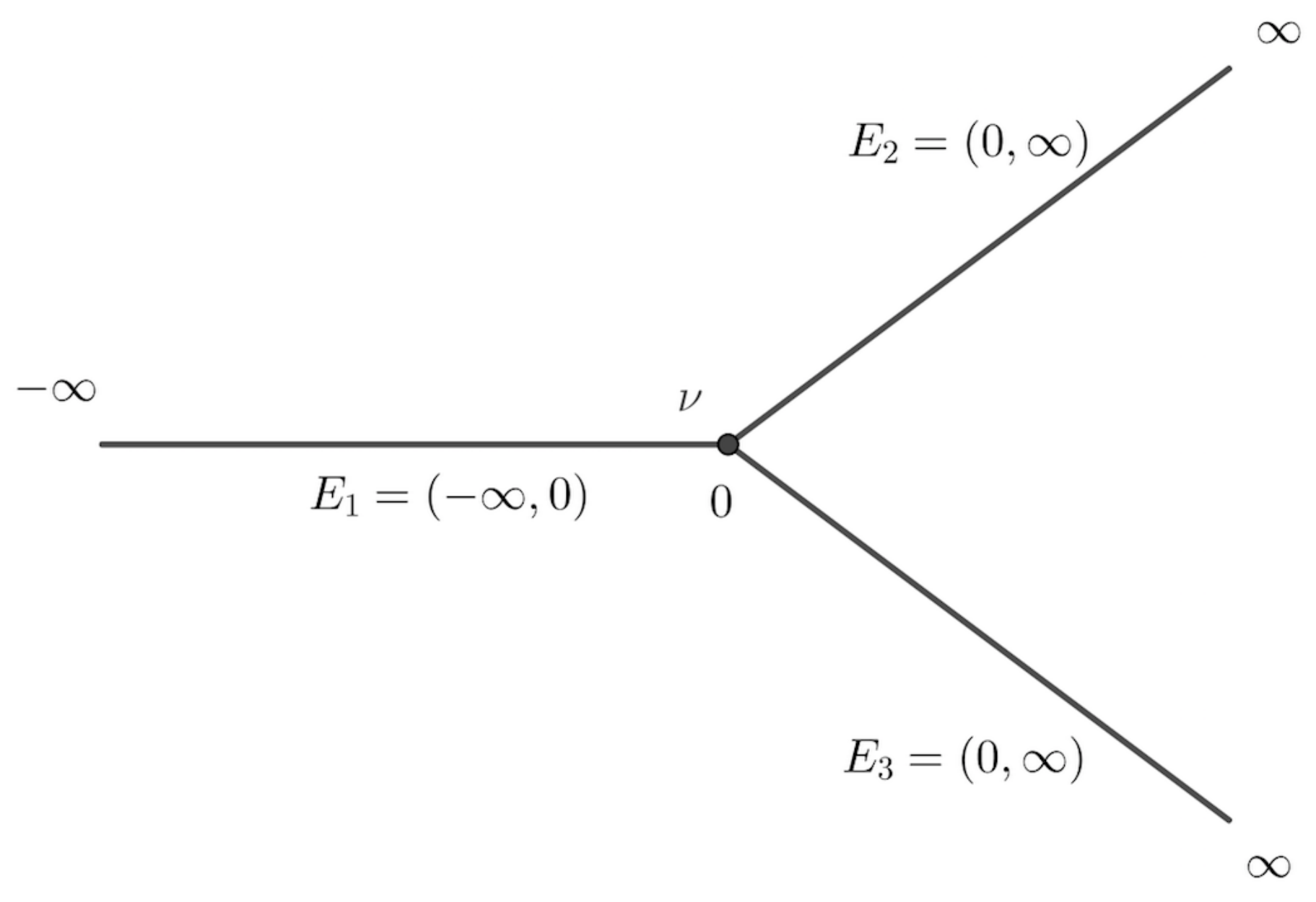}}
\subfigure[$\mathcal{Y} = (0,\infty) \cup (0,\infty) \cup (0,\infty)$]{\label{figYtipoII}\includegraphics[scale=.4, clip=true]{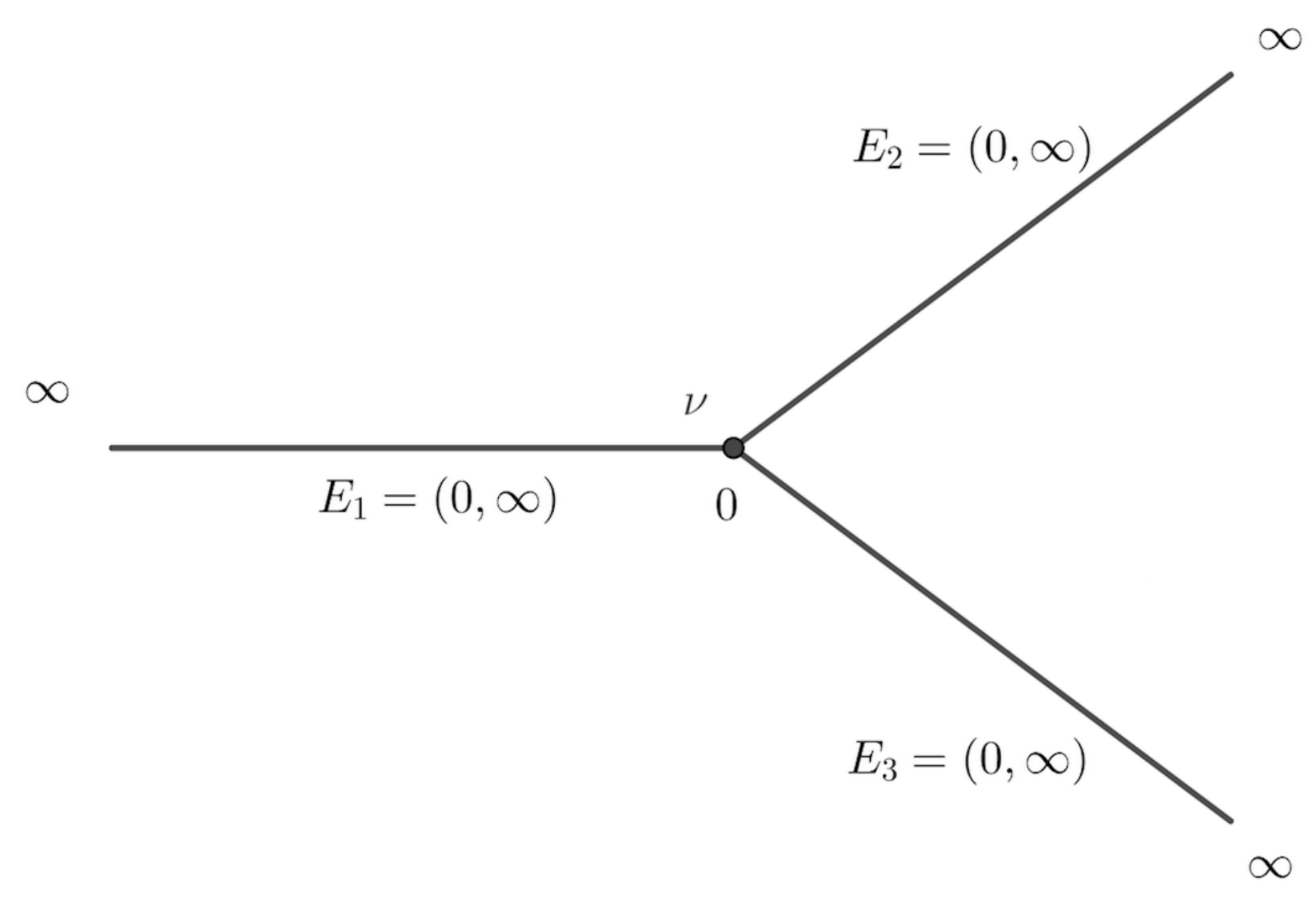}}
\end{center}
\caption{\small{Panel (a) shows a $\mathcal{Y}$-junction of the first type with $E_1 = (-\infty,0)$ and $E_j = (0,\infty)$, $j=2,3$, whereas panel (b) shows a $\mathcal{Y}$-junction of the second type (star graph) with $E_j = (0,\infty)$, $1 \leqq j \leqq 3$.}}\label{figYjunction}
\end{figure}

Our work focuses on the sine-Gordon model posed on a $\mathcal Y$-tricrystal junction, more preciely, on the equations
\begin{equation}
\label{sg1}
\partial_t^2 u_j - c_j^2 \partial_x^2 u_j + \sin u_j= 0, \qquad x \in E_j=(0, \infty), \;\,  t > 0, \;\, 1 \leqq j \leqq 3,
\end{equation}
where $u_j = u_j(x,t)$. It is assumed that the characteristic speed on each edge $E_j$ is constant and positive, $c_j > 0$, without loss of generality.

Posing the sine-Gordon equation on a metric graph comes out naturally from practical applications. Indeed, in the context of superconductor theory, the sine-Gordon equation on a metric graph arises as a model for coupling of two or more Josephson junctions in a network. A Josephson junction is a quantum mechanical structure that is made by two superconducting electrodes separated by a barrier (the junction), thin enough to allow coupling of the wave functions of electrons for the two superconductors \cite{Jsph65}. After appropriate normalizations, it can be shown that the phase difference $u$ (also known as order parameter) of the two wave functions satisfies the sine-Gordon equation \eqref{sine-G} \cite{Jsph65,BEMS}. Coupling three junctions at one common vertex, the so called tricrystal junction, can be regarded (and fabricated) as a probe of the order parameter symmetry of high temperature superconductors (cf. \cite{Tsuei94,Tsuei00}). Physically coupling three otherwise independent long Josephson junctions, $\mathcal{Y} = \cup_{j=1}^3 E_j$, together at one common vertex, was first proposed by Nakajima \emph{et al.} \cite{NakO76,NakO78} as a prototype for logic circuits.

What is more crucial in the analysis on quantum graphs is the choice of boundary conditions, mainly because the transition rules at the vertex completely determine the dynamics of the PDE model on the graph. For the sine-Gordon equation in $\mathcal{Y}$-junctions,  previous studies have basically (and almost exclusively) considered two types of boundary conditions: interactions of $\delta$ (continuity of the wave functions plus a law of Kirchhoff-type for the fluxes at the vertex, see \cite{AnPl-delta, Caput14, DuCa18} ) and of $\delta'$-type (continuity of the derivatives plus a Kirchhoff law for the self-induced magnetic flux).  Since Josephson models arise in the description of electromagnetic flux, interactions of $\delta'$-type have received more attention (see, for example, \cite{AnPl-deltaprime, Grunn93, KCK00, NakO78, NakO76, SvG05}). Thus, since the surface current density should be the same in all three films at the vertex, Nakajima \emph{et al.} \cite{NakO76,NakO78} (see also \cite{Grunn93,KCK00}) impose the condition
\begin{equation}
\label{bcmagnet}
c_1 \partial_x {u_1}_{|x=0} = c_2 \partial_x {u_2}_{|x=0} = c_3 \partial_x {u_3}_{|x=0},
\end{equation}
expressing that the magnetic field, which is proportional to the derivative of phase difference, should be continuous at the intersection. Moreover, the magnetic flux computed along an infinitesimal small contour encircling the origin (vertex) must vanish, that is, the total change of the gauge invariant phase difference must be zero \cite{KCK00,Susa19}. This leads to the Kirchhoff-type of boundary condition
\begin{equation}
\label{Kirchhoffbc}
\sum_{j=1}^3 c_j  u_j(0+) =0.
\end{equation}
The interaction conditions \eqref{bcmagnet}-\eqref{Kirchhoffbc} are known as boundary conditions of $\delta'$-type: they express continuity of the fluxes (derivatives) plus a Kirchhoff-type rule for the self-induced magnetic flux. 

Motivated by physical applications, the purpose of the present paper is to study the stability of particular stationary solutions to the sine-Gordon equation seen as a first order system posed on a $\mathcal{Y}$-tricrystal junction, namely, the system
\begin{equation}
\label{sg2}
\begin{cases}
\partial_t u_j = v_j\\
\partial_t v_j =c_j^2 \partial_x^2 u_j - \sin u_j,
\end{cases}
\qquad x \in E_j=(0, \infty), \;\,  t > 0, \;\, 1 \leqq j \leqq 3. 
\end{equation}

As far as we know, there is no rigorous analytical study of the stability of stationary solutions of type kink and/or anti-kink to the vectorial sine-Gordon model \eqref{sg2} on a tricrystal junction with boundary conditions of $\delta'$-interaction type available in the literature (see in Angulo and Plaza \cite{AnPl-deltaprime} the case of  $\delta'$-interactions on $\mathcal Y$-junction graph of type I). The stability of these static configurations is an important property from both the mathematical and the physical points of view. Stability can predict whether a particular state can be observed in experiments or not. Unstable configurations are rapidly dominated by dispersion, drift, or by other interactions depending on the dynamics, and they are practically undetectable in applications.

For tricrystal junctions  we will study stationary solutions with a kink  type structure. The latter have the form $
u_j (x,t) = \varphi_j(x)$, $v_j(x,t) = 0$,
for all $j = 1,2,3$, and $x \in E_j$, $t > 0$, in \eqref{sg2}, where each of the profile functions $\varphi_j$ satisfies the equation 
\begin{equation}\label{kinkequa}
-c_j^2 \varphi''_j + \sin \varphi_j=0,
\end{equation}
on each edge $E_j=(0, \infty)$ and for all $j$, as well as the boundary conditions of $\delta'$-type, at the vertex $\nu = 0$:
\begin{equation}
\label{bcIIfam}
\begin{aligned}
c_1 \varphi'_1(0+) = c_2 \varphi'_2(0+) &= c_3 \varphi'_3(0+),\\
\sum_{j=1}^3 c_j  \varphi_j(0+) &= \lambda c_1 \varphi'_1(0+).
\end{aligned}
\end{equation}
These conditions depend upon the parameter $\lambda$, which ranges along the whole real line and determines the dynamics of the model. Therefore, the value $\lambda \in \R$ is part of the physical parameters that determine the model (such as the speeds $c_j$, for instance). Instead of adopting \emph{ad hoc} boundary conditions, we consider a parametrized family of transition rules covering a wide range of applications and which, for the particular value $\lambda = 0$, include the Kirchhoff condition \eqref{Kirchhoffbc} previously studied in the literature. Our analysis focuses on two particular class of solutions of the sine-Gordon equation known as \emph{kink} and \emph{anti-kink}  (also referred to as topological solitons) \cite{Dra83,Sco03,SCM}.

Initially, we look at the particular family of solutions with the same kink-type structure with $c_j>0$,
\begin{equation}\label{tricry1u}
\varphi_j(x)= 4 \arctan \big( e^{-(x-b_j)/c_j}\big), \qquad  x \in (0,\infty), \,\; j=1,2,3,\\
\end{equation}
where the constants $b_j$ are determined by the boundary conditions \eqref{bcIIfam}. The family satisfies as well 
\begin{equation}
\label{bcinfty}
\varphi_j(+\infty) = 0, \qquad j = 1,2,3,
\end{equation}
ensuring that $\varPhi = (\varphi_j)_{j=1}^3 \in H^2(\mathcal{Y})$. Our second class of  solutions   are the anti-kink/kink type soliton, namely,
 profiles having the form 
\begin{equation}
\label{antikink}
\begin{cases}
\varphi_1(x) = 4 \arctan \big( e^{(x-a_1)/c_1}\big), & x \in (0, \infty),\;\; \lim_{x\to +\infty}\varphi_1(x)=2\pi,\\
\varphi_j(x) = 4 \arctan \big( e^{(x-a_j)/c_j}\big)-2\pi, & x \in (0,\infty),\;\;  \lim_{x\to +\infty}\varphi_j(x)=0\;\; j=2,3,\\
\end{cases}
\end{equation}
where each $a_j$ is a constant determined by the boundary conditions \eqref{bcIIfam}. Expression in \eqref{antikink} is so-called $2\pi$-kink because the total Josephson phase is $2\pi$ when one circles the branch point at large distances, i.e., 
$\sum_{j=1}^3 \varphi_j(+\infty)=2\pi$. We have left open the stability study of other Josephson configurations systems, such as a tricrystal junction with a $\pi$-kink, $\phi_1(x) = 4 \arctan \big( e^{(x-a_1)/c_1}\big)-\pi$,  $\phi_j(x) = 4 \arctan \big( e^{(x-a_j)/c_j}\big)-2\pi$, $j=2,3$. This stability analysis is of important also for experimentalists since
these network systems open a large opportunity for applications in high-performance computers (see \cite{SvG05} and references therein).

In the forthcoming analysis we establish the existence of two smooth mapping of stationary profiles, the first one  $\lambda\in  ( -\infty, -\sum_{j=1}^3 c_j)\to \Pi_{\lambda,\delta'}=(\varphi_1,\varphi_2,\varphi_3, 0,0,0)$, with $\varphi_j=\varphi_{j, b_j(\lambda)}$ defined in \eqref{tricry1u}, and the second one $\lambda\in  ( -\infty, +\infty)\to \Phi_{\lambda,\delta'}=(\varphi_1, \varphi_{2}, \varphi_{3}, 0,0,0)$, with $\varphi_j= \varphi_{j, a_j(\lambda)}$ defined in \eqref{antikink}.

The main linear instability results of this manuscript are the following,

\begin{theorem}
\label{trimain} 
Let $\lambda\in  ( -\infty, -\sum_{j=1}^3 c_j)$ and consider the smooth family of stationary profiles $\lambda\mapsto \Pi_{\lambda,\delta'}$. Then $\Pi_{\lambda, \delta'}$ is linear and nonlinearly  unstable for the sine-Gordon model \eqref{stat1} on a tricrystal junction  in the following cases:
\begin{enumerate}
\item[1)] for $\lambda\in  ( -\frac{\pi}{2} \sum_{j=1}^3 c_j, -\sum_{j=1}^3 c_j)$ and $c_i>0$,
\item[2)] for $\lambda\in  ( -\infty, -\frac{\pi}{2} \sum_{j=1}^3 c_j]$ and $c_1=c_2=c_3$.
\end{enumerate}
\end{theorem}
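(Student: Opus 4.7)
The plan is to translate the instability question for $\Pi_{\lambda,\delta'}$ into a Morse-index count for the self-adjoint operator obtained by linearizing \eqref{sg2} around the profile, and then to exhibit a negative eigenvalue in each of the two prescribed parameter regimes. First I would linearize the first-order system \eqref{sg2} about $\Pi_{\lambda,\delta'}$ and write the linearized flow as $\partial_t W = J\mathcal{L}_{\lambda,\delta'} W$, where $\mathcal{L}_{\lambda,\delta'}$ is the diagonal Schrödinger-type operator with components $-c_j^2 \partial_x^2+\cos\varphi_j$ on $E_j$ and domain dictated by the linearization of the $\delta'$ conditions \eqref{bcIIfam}. The Hamiltonian structure then yields the standard reduction: if $\mathcal{L}_{\lambda,\delta'}\psi = -\mu^2\psi$ with $\mu>0$, then $e^{\mu t}(\psi,\mu\psi)$ is a growing mode of the linearization. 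Since $\varphi_j(x) = 4\arctan(e^{-(x-b_j)/c_j})$ gives $\cos\varphi_j(x) = 1 - 2\sech^2((x-b_j)/c_j)$, each diagonal entry is a translated P\"oschl--Teller operator on $(0,\infty)$ whose half-line spectral data are explicit, and the essential spectrum of $\mathcal{L}_{\lambda,\delta'}$ is $[1,\infty)$, so the task reduces to producing one discrete negative eigenvalue.

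The rigorous definition of $\mathcal{L}_{\lambda,\delta'}$ as a self-adjoint extension would be carried out via the extension theory of symmetric operators announced in the abstract, yielding a one-parameter family indexed by $\lambda\in\R$; the dependence of its spectrum on $\lambda$ would then be tracked through analytic perturbation theory, combined with Sturm--Liouville oscillation results adapted to the graph setting. For item 1), I would locate a threshold value near $\lambda=-\sum_j c_j$ at which a discrete eigenvalue of $\mathcal{L}_{\lambda,\delta'}$ separates from the essential spectrum, and then use monotonicity of the quadratic form $\langle \mathcal{L}_{\lambda,\delta'}\,\cdot,\,\cdot\rangle$ with respect to $\lambda$ together with a test-function ansatz built from the $\sech$ modes of each edge, in order to propagate this eigenvalue strictly below zero throughout the interval $(-\tfrac{\pi}{2}\sum_j c_j,-\sum_j c_j)$. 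For item 2), the symmetry $c_1=c_2=c_3$ permits a decomposition of $L^2(\mathcal{Y})$ into an invariant symmetric channel and an antisymmetric complement, on each of which $\mathcal{L}_{\lambda,\delta'}$ restricts to a scalar P\"oschl--Teller operator on $(0,\infty)$ subject to a $\lambda$-dependent Robin boundary condition at the origin; explicit Sturm oscillation on these reduced problems should then deliver a negative ground state throughout the full range $(-\infty,-\tfrac{\pi}{2}\sum_j c_j]$.

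The passage from linear to nonlinear instability would follow from the classical Henry--Perez--Lomba / Grillakis--Shatah--Strauss mechanism, using as inputs the simple real eigenvalue of $J\mathcal{L}_{\lambda,\delta'}$ just constructed and the local well-posedness of the sine-Gordon model in $H^1(\mathcal{Y})\times L^2(\mathcal{Y})$ established earlier in the paper. The main obstacle I anticipate is the sharp Morse-index bookkeeping on the full parameter intervals: because the $\delta'$ conditions couple all three edges through a single parameter $\lambda$, ruling out the scenario in which the negative eigenvalue is absorbed back into the essential spectrum at an intermediate value of $\lambda$ requires a careful combination of the extension-theoretic description of the vertex coupling with oscillation-theoretic comparison, and it is at this precise point that the analytic perturbation theory must be deployed most delicately, especially in case 1) where the symmetry reduction of case 2) is not available.
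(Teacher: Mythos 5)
Your overall architecture coincides with the paper's: linearize about $\Pi_{\lambda,\delta'}$, reduce instability to a kernel/Morse-index computation for the diagonal operator $\mathcal{W}_\lambda$ with the $\delta'$-domain, restrict to the symmetric invariant subspace and use analytic perturbation for case 2), and upgrade linear to nonlinear instability via the $C^2$ data-to-solution map. Your remark that an eigenvalue $-\mu^2<0$ of $\mathcal{W}_\lambda$ yields the growing mode $e^{\mu t}(\psi,\mu\psi)$ is correct and is essentially what Theorem \ref{crit} packages.

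The genuine gap is in case 1), exactly at the point you yourself flag as the main obstacle: you never supply a workable mechanism for proving $n(\mathcal{W}_\lambda)=1$ and $\ker(\mathcal{W}_\lambda)=\{0\}$ on $(-\frac{\pi}{2}\sum_j c_j,-\sum_j c_j)$, and the mechanism you sketch does not match the structure of the problem. There is no threshold near $\lambda=-\sum_j c_j$ at which a discrete eigenvalue detaches from the essential spectrum; the relevant dichotomy sits at $\lambda=-\frac{\pi}{2}\sum_j c_j$, where $b_j$ and hence $\varphi_j''(0)$ change sign. The paper's upper bound $n(\mathcal{W}_\lambda)\leqq 1$ comes from extension theory: every $\mathcal{W}_\lambda$ is a self-adjoint extension of a single symmetric operator $\widetilde{T}$ with domain \eqref{tricr2} and deficiency indices $n_\pm(\widetilde{T})=1$, the Darboux-type factorization \eqref{spec7} gives $\widetilde{T}\geqq 0$ precisely because the boundary term $P=-\sum_j c_j^2\psi_j(0)^2\,\varphi_j''(0)/\varphi_j'(0)$ in \eqref{7spec} is nonnegative when $\varphi_j''(0)\geqq 0$ (i.e.\ when $\lambda\geqq-\frac{\pi}{2}\sum_j c_j$), and Proposition \ref{semibounded} then caps the number of negative eigenvalues of any self-adjoint extension by the deficiency index; the lower bound is the explicit negative direction $\langle\mathcal{W}_\lambda\Phi,\Phi\rangle<0$ with $\Phi=(\varphi_j)$, via $x\cos x\leqq\sin x$ on $[0,\pi]$. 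Without an ingredient of this kind your ``monotonicity in $\lambda$ plus oscillation comparison'' has no traction, and note (Remark \ref{fora}) that $P<0$ once $\lambda<-\frac{\pi}{2}\sum_j c_j$, which is exactly why case 2) cannot be handled on the full space and forces the symmetry reduction. On that reduction, your description of the complementary channel is also inaccurate for $c_1=c_2=c_3$: on $\{\sum_j u_j=0\}$ the vertex conditions degenerate to $\lambda$-independent Neumann conditions rather than a Robin condition, and at $\lambda_0=-\frac{\pi}{2}\sum_j c_j$ this channel carries the two-dimensional kernel $\Span\{(\varphi_1',-\varphi_2',0),(0,\varphi_2',-\varphi_3')\}$; the paper avoids it altogether by running the entire instability argument (semigroup generation included) on the invariant subspace $\mathcal{C}_1$, a step your plan would need to make explicit.
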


\begin{theorem}\label{0antimain} 
Let $c_1=c_2=c_3$, $\lambda\in  (-\infty, +\infty)$,  and the smooth family of stationary anti-kink/kink profiles $\lambda\to \Phi_{\lambda,\delta'}$ determined above. Then $\Phi_{\lambda, \delta'}$ is spectrally unstable for the sine-Gordon model \eqref{sg2} on a tricrystal junction. 
\end{theorem}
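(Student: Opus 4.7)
The plan is to linearize the first-order sine-Gordon system \eqref{sg2} about the anti-kink/kink profile $\Phi_{\lambda,\delta'}$ and reduce the spectral instability statement to a Morse-index calculation. Writing a perturbation as $e^{\sigma t}(w,\sigma w)$ and substituting into \eqref{sg2} yields the eigenvalue equation $\mathcal{L}^A_{\delta'} w=-\sigma^2 w$ for the self-adjoint Schr\"odinger-type operator
\[
(\mathcal{L}^A_{\delta'} w)_j = -c^2 w_j'' + \cos(\varphi_j)\, w_j,\qquad j=1,2,3,
\]
whose domain in $L^2(\mathcal{Y})$ encodes the $\delta'$-interaction rules \eqref{bcIIfam}. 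Because $\mathcal{L}^A_{\delta'}$ is self-adjoint, $\sigma$ is either real or purely imaginary, so establishing spectral instability amounts to proving that $\mathcal{L}^A_{\delta'}$ has at least one strictly negative eigenvalue for every $\lambda\in\mathbb{R}$; any such eigenvalue $\mu<0$ then delivers the unstable mode $\sigma=\sqrt{-\mu}>0$.

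Under the hypothesis $c_1=c_2=c_3=c$, uniqueness of the profile parameters forces $a_2(\lambda)=a_3(\lambda)$, so $\mathcal{L}^A_{\delta'}$ commutes with the involution swapping edges $2$ and $3$. This yields an $\mathcal{S}_2$-decomposition $L^2(\mathcal{Y})=\mathcal{H}_+\oplus\mathcal{H}_-$ with antisymmetric piece $\mathcal{H}_-=\{(0,w,-w):w\in L^2(0,\infty)\}$: on $\mathcal{H}_-$ the flux condition in \eqref{bcIIfam} collapses to the Neumann rule $w'(0+)=0$ while the Kirchhoff-type sum is automatically satisfied, so $\mathcal{L}^A_{\delta'}\big|_{\mathcal{H}_-}$ reduces to the scalar Neumann P\"oschl-Teller operator
\[
\mathcal{T}_- := -c^2\partial_x^2 + 1 - 2\sech^2\!\big((\cdot-a_2)/c\big) \quad \text{on}\;\; L^2(0,\infty).
\]
Since $\varphi_2'(x) = (2/c)\sech((x-a_2)/c)$ satisfies $\mathcal{T}_-\varphi_2'=0$ pointwise, I will use $g(x)=\sech((x-a_2)/c)\in H^1(0,\infty)$ as a trial function in the quadratic form of $\mathcal{T}_-$. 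Integration by parts, together with the pointwise vanishing of $\mathcal{T}_- g$ in the bulk, gives
\[
\langle \mathcal{T}_- g,\, g\rangle \;=\; -c^2 g'(0)g(0) \;=\; -c\,\sech^2(a_2/c)\,\tanh(a_2/c),
\]
which is strictly negative on the regime $\{\lambda : a_2(\lambda)>0\}$, producing a negative eigenvalue of $\mathcal{L}^A_{\delta'}$ there. An analogous trial in $\mathcal{H}_+$, built from the P\"oschl-Teller ground states centered at $a_1$ and $a_2$ and adjusted for the $\lambda$-linear boundary form dictated by the $\delta'$ conditions, is designed to cover the complementary regime.

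The main obstacle will be stitching these partial results into a statement uniform over all $\lambda\in\mathbb{R}$: as $\lambda$ crosses its critical values the quantities $a_j(\lambda)$ change sign and no single explicit trial function suffices. To bridge the remaining ranges I plan to invoke the three tools highlighted in the abstract. First, extension theory of symmetric operators realizes $\mathcal{L}^A_{\delta'}$ as a one-parameter family (in $\lambda$) of self-adjoint extensions of a common minimal symmetric operator, analyzable via von Neumann-Krein deficiency data. Second, Sturm-Liouville oscillation applied to the Friedrichs (Dirichlet) extension---where the three edges decouple into independent half-line P\"oschl-Teller problems---furnishes an exact anchor value for the Morse index via the nodal count of $\varphi_j'$. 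Third, analytic perturbation theory \`a la Kato then tracks the motion of eigenvalues along the family as $\lambda$ varies and guarantees persistence of at least one strictly negative eigenvalue throughout $\lambda\in\mathbb{R}$. Assembling these ingredients yields $n(\mathcal{L}^A_{\delta'})\geq 1$ uniformly, and hence the spectral instability of $\Phi_{\lambda,\delta'}$ as claimed.
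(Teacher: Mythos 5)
Your opening reduction is sound and, for the \emph{spectral} instability claim, actually lighter than the paper's route: if $\mathcal{L}_\lambda w=\mu w$ with $\mu<0$ and $w\in D(\mathcal{L}_\lambda)$, then $\Psi=(w,\sqrt{-\mu}\,w)$ is a genuine eigenfunction of $J\mathcal{E}$ with eigenvalue $\sqrt{-\mu}>0$, so one only needs $n(\mathcal{L}_\lambda)\geq 1$ (the paper instead invokes Theorem \ref{crit}, which also requires invertibility and the exact count $n(\mathcal{L}_\lambda)=1$, obtained from the extension-theory upper bound). Your antisymmetric-sector computation is also correct: on $\mathcal{H}_-=\{(0,w,-w)\}$ the $\delta'$ conditions do collapse to a Neumann half-line P\"oschl--Teller problem, $g=\sech((\cdot-a_2)/c)$ lies in its form domain, and $Q_-(g)=-c\,\sech^2(a_2/c)\tanh(a_2/c)<0$ precisely when $a_2(\lambda)>0$, i.e.\ $\lambda<-\tfrac{\pi}{2}c_1$. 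This is a genuinely different (and arguably cleaner) device for that range than the paper's restriction to the symmetric subspace $\mathcal{C}=\{u_1=u_2=u_3\}$ in Proposition \ref{Appanti}.

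The gap is the complementary regime $\lambda\geq-\tfrac{\pi}{2}c_1$, where $a_2\leq 0$ and your trial function gives $Q_-(g)\geq 0$. The ``analogous trial in $\mathcal{H}_+$'' is never exhibited, and this is exactly where the difficulty lives: the paper's symmetric trial function $\Lambda_1=(\varphi_1',\varphi_2',\varphi_3')$ works only for $\lambda\in[-\tfrac{\pi}{2}c_1,0)$ (formula \eqref{2antiqua}), and for $\lambda\geq 0$ the boundary term $\tfrac{9c_1^2}{\lambda}[\varphi_1'(0)]^2$ is positive and no easy explicit negative direction is known (Remark \ref{antifora}); the paper must switch to analytic perturbation in the gap metric near $\lambda=0$ plus a Riesz-projection continuation (Proposition \ref{antimain4}). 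Your closing paragraph gestures at this machinery but omits its essential engine: the continuation argument only works because $\ker(\mathcal{L}_\lambda)=\{\mathbf{0}\}$ for all $\lambda\neq-\tfrac{\pi}{2}(c_2+c_3-c_1)$ (Proposition \ref{antimain}), which forbids the negative eigenvalue from crossing zero as $\lambda$ moves through $[0,\infty)$; you never compute the kernel. Moreover, your proposed ``anchor'' is miscalibrated: $\varphi_j'$ is a nodeless $\sech$-profile, so the decoupled Dirichlet (Friedrichs) half-line operators are \emph{nonnegative} and Sturm--Liouville oscillation yields Morse index zero there --- in the paper this feeds the \emph{upper} bound $n(\mathcal{L}_\lambda)\leq n_\pm=1$ via Proposition \ref{semibounded}, and cannot by itself produce the negative eigenvalue you need. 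As written, the proof covers only $\lambda<-\tfrac{\pi}{2}c_1$.
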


In our stability analysis below, the family of linearized operators around the stationary profiles plays a fundamental role. These operators are characterized by the following formal self-adjoint diagonal matrix operators,
\begin{equation}\label{trav23}
\mathcal{W} \bold{v}=\Big (\Big(-c_j^2\frac{d^2}{dx^2}v_j + \cos (\varphi_j)v_j
\Big)\delta_{j,k} \Big ),\quad \l1\leqq j, k\leqq 3,\;\;\bold{v}= (v_j)_{j=1}^3,
\end{equation}
where $\delta_{j,k}$ denotes the Kronecker symbol, and defined on domains with $\delta'$-type interaction at the vertex $\nu = 0$, 
\begin{equation}\label{2trav23}
D(\mathcal{W})= \Big \{\bold{v}=(v_j)_{j=1}^3\in H^2(\mathcal{Y}):
c_1v'_1(0+)=c_2v'_2(0+)=c_3v_3(0+),\;\; \sum\limits_{j=1}^3 c_jv_j(0+)=\lambda c_1v'_1(0+) \Big \},
\end{equation}
with $\lambda \in \R$.
 It is to be observed that the particular family \eqref{tricry1u} of kink-profile stationary solutions under consideration is such that $ (\varphi_j)_{j=1}^3 \in D(\mathcal{W})$ in view that they satisfy the boundary conditions \eqref{bcIIfam}.

In section \S \ref{seccriterium} we establish a general instability criterion for static solutions for the sine-Gordon model \eqref{sg2} on a $\mathcal{Y}$-junction. The reader can find this result in Theorem \ref{crit} below. It essentially provides sufficient conditions on the flow of the semigroup generated by the linearization around the stationary solutions, for the existence of a pair of positive/negative real eigenvalues of this linearization, which determine the Morse index for the associated self-adjoint operator $(\mathcal{W}, D(\mathcal{W})$. The proof of Theorems \ref{trimain} and \ref{0antimain} follow as an application of Theorem \ref{crit}.

The structure of the paper is the following:  in section \S \ref{seccriterium}, we review the general instability criterion for stationary solutions for the sine-Gordon model \eqref{sg2} on a $\mathcal{Y}$-junction developed in \cite{AnPl-delta} (see Theorem \ref{crit} below; see also \cite{AngCav}). It is to be observed that this instability criterion is very versatile, as it applies to any type of stationary solutions (such as anti-kinks or breathers, for example) and for different interactions at the vertex, such as both the $\delta$- and $\delta'$-types. The subsection 3.1 is devoted to develop the instability theory of kink-profiles and we show Theorem \ref{trimain}. A special space is defined in order to analyze the Cauchy problem in $H^1(\mathcal Y)\times L^2(\mathcal Y)$. Subsection 3.2 is devoted to the proof of  Theorem \ref{0antimain}. By convenience of the reader and by the sake of completeness we establish in the Appendix some results of the extension theory of symmetric operators used in the body of the manuscript.

\subsection*{On notation}

For any $-\infty\leq a<b\leq\infty$, we denote by $L^2(a,b)$ the Hilbert space equipped with the inner product $
(u,v)=\int_a^b u(x)\overline{v(x)}dx$.
By $H^n(a,b)$  we denote the classical  Sobolev spaces on $(a,b)\subseteq \mathbb R$ with the usual norm.   We denote by $\mathcal{Y}$ the junction of type II parametrized by the edges $E_j = (0,\infty)$, $j =1,2,3$,  attached to a common vertex $\nu=0$. On the graph $\mathcal{Y}$ we define the classical $L^p$-spaces 
  \begin{equation*}
  L^p(\mathcal{Y})=L^p(0, +\infty)  \oplus L^p(0, +\infty) \oplus L^p(0, +\infty), \quad \,p>1,
  \end{equation*}   
 and  Sobolev-spaces $H^m(\mathcal{Y})=H^m(0, +\infty) \oplus H^m(0, +\infty)  \oplus H^m(0, +\infty)$  
with the natural norms. Also, for $\mathbf{u}= (u_j)_{j=1}^3$, $\mathbf{v}= (v_j)_{j=1}^3 \in L^2(\mathcal{Y})$, the inner product is defined by
$$
\langle \mathbf{u}, \mathbf{v}\rangle=  \sum_{j=1}^3 \int_0^{\infty}  u_j(x) \overline{v_j(x)} \, dx
$$
Depending on the context we will use the following notations for different objects. By $\|\cdot \|$ we denote  the norm in $L^2(\mathbb{R})$ or in $L^2(\mathcal{Y})$. By $\| \cdot\| _p$ we denote  the norm in $L^p(\mathbb{R})$ or in $L^p(\mathcal{Y})$.   Finally, if $A$ is a closed, densely defined symmetric operator in a Hilbert space $H$ then its domain is denoted by $D(A)$, the deficiency indices of $A$ are denoted by  $n_\pm(A):=\dim  \ker (A^*\mp iI)$, where $A^*$ is the adjoint operator of $A$, and the number of negative eigenvalues counting multiplicities (or Morse index) of $A$ is denoted by  $n(A)$.

 \section{Preliminaries: Linear instability criterion for sine-Gordon model on a tricrystal junction}
\label{seccriterium}

We start our study by recasting the equations in \eqref{sg2} in the vectorial form 
  \begin{equation}\label{stat1}
  \bold w_t=JE\bold w +F(\bold w)
 \end{equation} 
  where $\bold w=(u, v)^\top$, with $u=(u_1, u_2, u_3)^\top$, $v=(v_1, v_2, v_3)^\top$, $u_j, v_j: E_j \to \mathbb R$, $1 \leqq j \leqq 3$,
 \begin{equation}\label{stat2} 
J=\left(\begin{array}{cc} 0 & I_3 \\ -I_3  & 0 \end{array}\right),\quad E=\left(\begin{array}{cc} \mathcal T& 0 \\0 & I_3\end{array}\right), \quad 
F(\bold w)=\left(\begin{array}{cc}  0\\  0   \\  0   \\  -\sin (u_1)   \\  -\sin (u_2)   \\  -\sin (u_3)  \end{array}\right),
\end{equation} 
 and where $I_3$ denotes the identity matrix of order $3$ and $ \mathcal T$ is the diagonal-matrix linear operator
 \begin{equation} 
 \label{Lapla}
 \mathcal{T}=\Big (\Big(-c_j^2\frac{d^2}{dx^2}
\Big)\delta_{j,k} \Big ),\quad\l1\leqq j, k\leqq 3.
   \end{equation}

For the $\mathcal{Y}$-junction being a tricrystal junction, we will use the $\delta'$-interaction domain for $\mathcal T_\lambda\equiv\mathcal T$ given by \eqref{2trav23}, namely, $D(\mathcal{T}_{\lambda}) =D(\mathcal{W})$:
\begin{equation}
\label{tricry2}
D(\mathcal{T}_{\lambda}) :=  \{(v_j)_{j=1}^3\in H^2(\mathcal Y):
c_1v'_1(0+)=c_2v'_2(0+)=c_3v'_3(0+),\;\; \sum\limits_{j=1}^3 c_jv_j(0+)=\lambda c_1v'_1(0+) \},
\end{equation}
with $\lambda\in \mathbb R$ (see Proposition \ref{T}). 

In the sequel, we review the linear instability criterion of stationary solutions for the sine-Gordon model \eqref{sg2} on a  $\mathcal{Y}$-junction developed in \cite{AnPl-delta}. Although the stability analysis in \cite{AnPl-delta} pertains to interactions of $\delta$-type at the vertex, it is important to note that the criterion proved in that reference also applies to any type of stationary solutions independently of the boundary conditions under consideration and, therefore, it can be used to study the present configurations with boundary rules at the vertex of $\delta'$-interaction type, or even to other types of stationary solutions to the sine-Gordon equation such as anti-kinks and breathers, for instance. In addition, the criterion applies to both the $\mathcal Y$-junction of type I (see Figure \ref{figYtipoI}) and of type II (see Figure \ref{figYtipoII}). 

Let $\mathcal{Y}$ be a tricrystal junction. Let us suppose that $JE$ on a domain $D(JE)\subset H^1(\mathcal Y)\times L^2(\mathcal Y)$ is the infinitesimal generator of a $C_0$-semigroup on $H^1(\mathcal Y)\times L^2(\mathcal Y)$ and that there exists an stationary solution $\Upsilon=(\zeta_1, \zeta_2, \zeta_3,0,0,0)\in D(JE)$. Thus, every component $\zeta_j$ satisfies the equation
\begin{equation}\label{statio}
-c^2_j \zeta''_j + \sin (\zeta_j)=0,\quad j=1,2,3.
\end{equation}
 Now, we suppose  that $\bold w$ satisfies formally  equality in \eqref{stat1} and we define
  \begin{equation}\label{stat3}
 \bold v \equiv \bold w-\Upsilon,
  \end{equation}
  then, from \eqref{statio} we obtain the following  linearized system for \eqref{stat1} around $\Phi$,
   \begin{equation}\label{stat4}
  \bold v_t=J\mathcal E\bold v,
 \end{equation} 
  with $\mathcal E$ being the $6\times 6$ diagonal-matrix $\mathcal E=\left(\begin{array}{cc} \mathcal L & 0 \\0 & I_3\end{array}\right)$, and 
 \begin{equation}\label{stat5}
\mathcal{L} =\Big (\Big(-c_j^2\frac{d^2}{dx^2}+ \cos (\zeta_j)
\Big)\delta_{j,k} \Big ),\qquad 1\leqq j, k\leqq 3.
\end{equation}
We point out the equality $J\mathcal E=J E+\mathcal{Z}$, with 
\begin{equation}\label{Z}
\mathcal{Z} = \left(\begin{array}{cc} 0& 0 \\ \big( - \cos(\zeta_j) \, \delta_{j,k} \big)& 0\end{array}\right)
\end{equation}
 being a \emph{bounded} operator on $H^1(\mathcal Y)\times L^2(\mathcal Y)$. This  implies that $J\mathcal E$ also generates a  $C_0$-semigroup on  $H^1(\mathcal Y)\times L^2(\mathcal Y)$ (see Pazy \cite{Pa}).
  
The linear instability criterion provides sufficient conditions for the trivial solution $\bold v \equiv 0$ to be unstable by the linear flow of \eqref{stat4}. More precisely, it underlies the existence of a {\it growing mode solution} to \eqref{stat4} of the form $ \bold v=e^{\lambda t} \Psi$ and $\RE \lambda >0$. To find it, one needs to solve the formal system 
 \begin{equation}\label{stat6}
 J\mathcal E \Psi=\lambda \Psi,
\end{equation}
with $\Psi\in D(J\mathcal E)$. If we denote by $\sigma(J\mathcal E)= \sigma_{\mathrm{pt}}(J\mathcal E)\cup \sigma_{\mathrm{ess}}(J\mathcal E)$ the spectrum  of $J\mathcal E$ (namely, $\lambda \in  \sigma_{\mathrm{pt}}(J\mathcal E)$ if $\lambda$ is isolated and with finite multiplicity) then we have the following
\begin{definition}
The stationary vector solution $\Upsilon \in D(\mathcal E)$    is said to be \textit{spectrally stable} for model sine-Gordon \eqref{stat1} if the spectrum of $J\mathcal E$, $\sigma(J\mathcal \mathcal E)$, satisfies $\sigma(J\mathcal E)\subset i\mathbb{R}.$
Otherwise, the stationary solution $\Upsilon\in D(\mathcal E)$   is said to be \textit{spectrally unstable}.
\end{definition}
\begin{remark}
It is well-known that $ \sigma_{\mathrm{pt}}(J\mathcal E)$ is symmetric with respect to both the real and imaginary axes and $ \sigma_{\mathrm{ess}}(J\mathcal E)\subset i\mathbb{R}$ under the assumption that $J$ is skew-symmetric and that $\mathcal E$ is self-adjoint (by supposing, for instance, Assumption $(S_3)$ below for $\mathcal L$; see \cite[Lemma 5.6 and Theorem 5.8]{GrilSha90}). These cases on $J$ and  $\mathcal E$ are considered in the theory. Hence, it is equivalent to say that $\Upsilon\in D(J\mathcal E)$ is  \textit{spectrally stable} if $ \sigma_{\mathrm{pt}}(J\mathcal E)\subset i\mathbb{R}$, and it is spectrally unstable if $ \sigma_{\mathrm{pt}}(J\mathcal E)$ contains point $\lambda$ with  $\RE \lambda>0.$
\end{remark}

 It is widely known  that the spectral instability of a specific traveling wave solution of an evolution type model is   a key prerequisite to show their nonlinear instability property (see \cite{GrilSha90, Lopes, ShaStr00} and the references therein). Thus we have the following definition.

 \begin{definition}\label{nonstab}
The stationary vector solution $\Upsilon \in D(\mathcal E)$    is said to be \textit{nonlinearly unstable} in $X\equiv H^1(\mathcal Y)\times L^2(\mathcal Y)$-norm for model sine-Gordon \eqref{stat1} if there is $\epsilon>0$ such that for every $\delta>0$ there exist an initial data $\bold w_0$ with $\|\Upsilon -\bold w_0\|_X<\delta$ and an instant $t_0=t_0(\bold w_0)$, such that $\|\bold w(t_0)-\Upsilon \|_X>\epsilon$, where $\bold w=\bold w(t)$ is the solution of the sine-Gordon model with initial data $\bold w(0)=\bold w_0$.
\end{definition}
 
 From \eqref{stat6}, the eigenvalue problem to solve is now reduced to
  \begin{equation}\label{stat11}    
 J\mathcal E\Psi =\lambda \Psi, \quad \RE \lambda >0,\;\;\Psi\in D(\mathcal E).
 \end{equation}  
 Next, we establish our theoretical framework and assumptions for obtaining a nontrivial solution to  problem in \eqref{stat11}:
 \begin{enumerate}
 \item[($S_1$)]  $J\mathcal E$ is the generator of a $C_0$-semigroup $\{S(t)\}_{t\geqq 0}$.  
 \item[($S_2$)] Let $\mathcal L$ be the matrix-operator in \eqref{stat5}  defined on a domain $D(\mathcal L)\subset L^2(\mathcal Y)$ on which $\mathcal L$ is self-adjoint.
 \item[($S_3$)] Suppose $\mathcal L:D(\mathcal L)\to L^2(\mathcal Y)$ is  invertible  with Morse index $n(\mathcal L)=1$ and such that $\sigma(\mathcal L)=\{\lambda_0\}\cup J_0$ with $J_0\subset [r_0, +\infty)$, for $r_0>0$, and $\lambda_0<0$,
 \end{enumerate} 
 
The criterion for linear instability reads precisely as follows (cf. \cite{AnPl-delta}).
\begin{theorem}[linear instability criterion]
\label{crit}
Suppose the assumptions $(S_1)$ - $(S_3)$ hold.  Then the operator $J\mathcal E$ has a real positive and a real negative eigenvalue. 
\end{theorem}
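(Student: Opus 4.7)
The plan is to exploit the block structure of $J\mathcal{E}$ to reduce the spectral problem $J\mathcal{E}\Psi = \lambda\Psi$ to a scalar eigenvalue problem for the self-adjoint operator $\mathcal{L}$. First I would compute
$$
J\mathcal{E} = \begin{pmatrix} 0 & I_3 \\ -\mathcal{L} & 0 \end{pmatrix},
$$
so that, writing $\Psi = (u,v)^\top$, the eigenvalue equation becomes the coupled system $v = \lambda u$ and $-\mathcal{L} u = \lambda v$. Eliminating $v$ yields the fundamental identity
$$
\mathcal{L} u = -\lambda^2 u.
$$
Thus finding a real nonzero $\lambda$ with $J\mathcal{E}\Psi = \lambda \Psi$ is equivalent to exhibiting a strictly negative point of $\sigma(\mathcal{L})$.

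Next I would invoke hypothesis $(S_3)$: the self-adjoint operator $(\mathcal{L}, D(\mathcal{L}))$ has exactly one negative eigenvalue $\lambda_0 < 0$, which is isolated and simple because $n(\mathcal{L})=1$ and the rest of $\sigma(\mathcal{L})$ lies in $[r_0,+\infty)$. Let $u_0 \in D(\mathcal{L})$ be a normalized eigenfunction, $\mathcal{L} u_0 = \lambda_0 u_0$. Setting $\lambda_{\pm} := \pm\sqrt{-\lambda_0} \in \mathbb{R}\setminus\{0\}$ and $\Psi_{\pm} := (u_0,\lambda_{\pm} u_0)^\top$, a direct substitution in the formula above shows $J\mathcal{E}\Psi_{\pm} = \lambda_{\pm}\Psi_{\pm}$, which simultaneously produces the desired real positive and real negative eigenvalues. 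The symmetry $(u,v) \mapsto (u,-v)$ is exactly what couples the two signs.

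Finally, I would verify that $\Psi_{\pm} \in D(J\mathcal{E})$. Since $u_0 \in D(\mathcal{L}) \subset H^2(\mathcal{Y})$, the $\delta'$-transmission conditions at the vertex encoded in $D(\mathcal{L}) = D(\mathcal{T}_{\lambda})$ are automatically satisfied by $u_0$; moreover $\lambda_{\pm} u_0 \in H^1(\mathcal{Y})$, so $\Psi_{\pm}$ lies in the natural domain of the generator $J\mathcal{E}$ on $H^1(\mathcal{Y}) \times L^2(\mathcal{Y})$ given by $(S_1)$. This step is purely bookkeeping but is the only place where the functional-analytic framework (rather than abstract algebra) enters.

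The main obstacle, as such, is conceptual rather than technical: the entire analytical content of the theorem is packaged into hypothesis $(S_3)$, so the theorem is really a translation mechanism that converts a negative Morse index for $\mathcal{L}$ into genuine exponentially growing modes for the first-order system. Once the block decomposition of $J\mathcal{E}$ is noted, no further spectral-theoretic work is required beyond the domain check; the burden is shifted to later sections, where Sturm--Liouville and extension-theoretic arguments must verify $(S_3)$ for the kink and anti-kink/kink profiles appearing in Theorems \ref{trimain} and \ref{0antimain}.
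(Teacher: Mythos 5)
Your proof is correct and is essentially the argument underlying the paper's criterion: the paper only cites \cite{AngCav, AnPl-delta} for the proof, and for this wave-type block structure the mechanism is exactly your quadratic-pencil reduction $\mathcal{L}u=-\lambda^2 u$, with the pair $\pm\sqrt{-\lambda_0}$ produced by the single negative eigenvalue guaranteed in $(S_3)$. Your domain verification $\Psi_{\pm}\in D(\mathcal{L})\times H^1(\mathcal{Y})=D(J\mathcal{E})$ and the observation that $(S_1)$ and $(S_2)$ enter only as bookkeeping for the eigenvalue construction are also accurate.
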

\begin{proof}
See \cite{AngCav, AnPl-delta}.
\end{proof}

\section{Instability of stationary solutions for the sine-Gordon equation with $\delta'$-interaction on a tricrystal junction}
\label{secinsI}

In this section we study the stability of stationary solutions determined by a $\delta'$-interaction type at the vertex  $\nu=0$ of a tricrystal junction. First we study the kink-profile type in \eqref{tricry1u}-\eqref{bcinfty}, thus the local well-posedness problem associated to \eqref{sg2} with a $\delta'$-interaction. Next,  we apply the linear instability criterion (Theorem \ref{crit}) to prove 
that the family of stationary solutions \eqref{tricry1u} are linearly (and nonlinearly) unstable (Theorem \ref{trimain} above). Our second focus goes to the study of the anti-kink-profile type in \eqref{antikink} and similarly as in the former profile case we establish the necessary ingredients for obtaining Theorem \ref{0antimain} above.

\subsection{Kink-profile's instability  on a tricrystal junction}

We start our stability study for the kink-profile type in \eqref{tricry1u}-\eqref{bcinfty} and so our first focus is dedicated to the Cauchy problem associated to the sine-Gordon model in \eqref{stat1}. As this study is not completely  standard in the case of metric graphs we provide the new ingredients that arise.

\subsubsection{Cauchy Problem in $H^1(\mathcal Y)\times L^2(\mathcal Y)$}

In this subsection we establish the local well-posedness in $H^1(\mathcal Y)\times L^2(\mathcal Y)$ of the  sine-Gordon equation on a tricrystal junction (section \S 2 in \cite{AnPl-delta}). We start with the following result from the extension theory. The proof follows the same strategy as in Proposition A.6 and Theorem 3.1 in Angulo and Plaza \cite{AnPl-deltaprime} (see Proposition \ref{11} in the Appendix) and we omit it.

\begin{proposition}\label{T}
Consider the closed symmetric operator $(\mathcal T, D( \mathcal T))$ densely defined on $L^2(\mathcal Y)$, with $\mathcal Y$ being a tricrystal junction, by
\begin{equation}\label{tricr2}
\begin{split}
&\mathcal{T}=\Big (\Big(-c_j^2\frac{d^2}{dx^2}\Big)\delta_{j,k} \Big ),\;\;1\leqq j, k\leqq 3,\\
&D(\mathcal{T})= \Big \{(v_j)_{j=1}^3\in H^2(\mathcal{Y}):
c_1v_1'(0+)=c_2v'_2(0+)=c_3v_3'(0+)=0,\;\; \sum_{j=1}^3 c_jv_j(0+)=0 \Big \}.
\end{split}
\end{equation}
Here $c_j\neq 0$, $1\leqq j\leqq 3$, and $\delta_{j,k}$ is the Kronecker symbol. Then, the deficiency indices are $n_{\pm}( \mathcal T)=1$. Therefore, we have that all the self-adjoint extensions of $( \mathcal T, D( \mathcal T))$ are given by the one-parameter  family $(\mathcal T_\lambda, D(\mathcal T_\lambda))$,  $\lambda\in \mathbb R$, with $\mathcal T_\lambda\equiv \mathcal T$ and $D(\mathcal T_\lambda)$  defined by
 \begin{equation}\label{tricr3}
D(\mathcal T_\lambda)= \{(u_j)_{j=1}^3\in H^2(\mathcal{Y}):
c_1u'_1(0+)=c_2u'_2(0+)=c_3u'_3(0+),\;\; \sum_{j=1}^3 c_ju_j(0+)=\lambda c_1u'_1(0+) \}.
\end{equation}

Moreover, the spectrum of the family of self-adjoint operators $(\mathcal T_\lambda, D(\mathcal T_\lambda))$ satisfies $\sigma_{\mathrm{ess}}(\mathcal T_\lambda)=[0, +\infty)$ for every $\lambda\neq 0$. For $\lambda<0$,  $\mathcal T_\lambda$ has precisely one negative simple eigenvalue. If $\lambda>0$ then $\mathcal T_\lambda$ has no eigenvalues (see \cite{AngGol18b}). 
\end{proposition}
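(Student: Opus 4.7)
The plan is to apply the von Neumann theory of self-adjoint extensions of symmetric operators, combined with explicit boundary-form computations. First I would verify that $(\mathcal{T}, D(\mathcal{T}))$ is closed, densely defined, and symmetric on $L^2(\mathcal{Y})$; density and closedness follow because the domain is cut out of $H^2(\mathcal{Y})$ by a finite number of continuous linear conditions on the boundary values, and symmetry is a direct consequence of componentwise integration by parts, which produces the boundary form
\begin{equation*}
\omega(u,v) = \sum_{j=1}^3 c_j^2 \bigl[u_j'(0+)\overline{v_j(0+)} - u_j(0+)\overline{v_j'(0+)}\bigr].
\end{equation*}
This form vanishes identically when both $u,v \in D(\mathcal{T})$, since every $u_j'(0+)$ and $v_j'(0+)$ is zero.

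To pin down $D(\mathcal{T}^*)$ and compute the deficiency indices, I would test $\omega(u,v) = 0$ against all admissible $v \in D(\mathcal{T})$. Because the tuple $(v_1(0+),v_2(0+),v_3(0+))$ ranges freely over the hyperplane $\sum_j c_j v_j(0+) = 0$, the cotangent vector $(c_j^2 u_j'(0+))_{j}$ is forced to be proportional to $(c_j)_j$, which is exactly the condition $c_1 u_1'(0+) = c_2 u_2'(0+) = c_3 u_3'(0+)$. Hence
\begin{equation*}
D(\mathcal{T}^*) = \bigl\{ u \in H^2(\mathcal{Y}) : c_1 u_1'(0+) = c_2 u_2'(0+) = c_3 u_3'(0+) \bigr\}.
\end{equation*}
Solving $\mathcal{T}^* u = \pm i u$ gives on each edge only one decaying exponential $u_j(x) = A_j e^{-\alpha_\pm x/c_j}$ (with $\alpha_\pm$ the root of $\alpha^2 = \mp i$ having positive real part), a three-dimensional solution space; the two equalities defining $D(\mathcal{T}^*)$ cut its dimension to one, so $n_{\pm}(\mathcal{T}) = 1$.

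By von Neumann's theorem, the self-adjoint extensions of $\mathcal{T}$ form a one-parameter family indexed by $U(1)$. To show that each $\mathcal{T}_\lambda$, $\lambda \in \R$, is self-adjoint, I would substitute the parametrization $c_j u_j'(0+) = \alpha_u$, $\sum_j c_j u_j(0+) = \lambda \alpha_u$ (and the analogous quantities $\alpha_v$ for $v$) into $\omega(u,v)$, whereupon the form collapses to $\lambda(\alpha_u \overline{\alpha_v} - \overline{\alpha_v}\alpha_u) = 0$; thus $\mathcal{T}_\lambda$ is symmetric, and since $D(\mathcal{T}_\lambda)$ has codimension three in $H^2(\mathcal{Y})$---the right dimension for a Lagrangian subspace of the six-dimensional boundary-data space---it must be self-adjoint. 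Distinct $\lambda$ produce distinct extensions, and dimension counting shows this one-real-parameter family exhausts $U(1)$.

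For the spectrum, $\sigma_{\mathrm{ess}}(\mathcal{T}_\lambda) = [0,+\infty)$ follows because the resolvent of $\mathcal{T}_\lambda$ differs from that of the Neumann realization on each half-line (whose essential spectrum is $[0,\infty)$) by a finite-rank operator, so Weyl's theorem applies. For a negative eigenvalue $-\kappa^2$, I would plug in the ansatz $u_j(x) = A_j e^{-\kappa x/c_j}$; the first boundary condition forces $A_1 = A_2 = A_3 = A$, and the second reduces to $A\sum_j c_j = -\lambda \kappa A$, giving $\kappa = -(\sum_j c_j)/\lambda$. A positive $\kappa$ exists iff $\lambda < 0$, in which case the eigenvalue is simple because the eigenfunction is determined up to a scalar. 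For $\lambda > 0$ the same computation rules out negative eigenvalues, while embedded eigenvalues in $[0,\infty)$ are excluded because any candidate eigenfunction would have to be a bounded linear combination of trigonometric (or affine) modes on each edge, which cannot lie in $L^2(\mathcal{Y})$. I expect the main technical step to be the careful identification of $D(\mathcal{T}^*)$ via the boundary form; once that is in place everything else reduces to explicit Sturm--Liouville computations.
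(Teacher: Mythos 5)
Your proof is correct and follows essentially the same route the paper itself invokes (the paper omits the proof, deferring to the extension-theory argument of Proposition A.6 and Theorem 3.1 in \cite{AnPl-deltaprime} and to Proposition \ref{11} of the Appendix): integration by parts to identify the boundary form and $D(\mathcal T^*)$, explicit computation of the deficiency subspaces to get $n_\pm(\mathcal T)=1$, von Neumann's parametrization for the self-adjoint extensions, and the decaying-exponential ansatz plus Weyl's theorem for the spectral statements. The one inaccurate sub-claim is that ``dimension counting shows this one-real-parameter family exhausts $U(1)$'': a continuous injection of $\mathbb R$ into the circle of extensions cannot be onto, and indeed $\{\mathcal T_\lambda\}_{\lambda\in\mathbb R}$ misses exactly one extension, the decoupled Neumann one with $u_j'(0+)=0$ for all $j$ (formally $\lambda=\infty$); this imprecision is already present in the statement of the proposition and is harmless for the rest of the paper, which only uses finite values of $\lambda$.
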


\begin{theorem}\label{well3} Let $\mathcal Y=(0,+\infty)\cup (0,+\infty)\cup(0,+\infty)$.
	For any $\Psi \in H^1(\mathcal{Y})\times L^2(\mathcal Y) $
	there exists $T > 0$ such that the sine-Gordon
	equation \eqref{stat1} has a unique solution $\mathbf w \in C
	([0,T]; H^1(\mathcal{Y})\times  L^2(\mathcal Y) )$ satisfying  $\mathbf
	w(0)=\Psi$. For
	each $T_0\in (0, T)$ the mapping data-solution
	\begin{equation}\label{mapping}
	\Psi\in H^1(\mathcal{Y})\times  L^2(\mathcal Y) \to \mathbf w \in C ([0,T_0];
	H^1(\mathcal{Y})\times  L^2(\mathcal Y) ),
	\end{equation}
	 is at least of class $C^2$. 
	\end{theorem}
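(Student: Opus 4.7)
The plan is to recast \eqref{sg2} as the abstract semilinear evolution equation \eqref{stat1}, $\mathbf w_t=JE\mathbf w+F(\mathbf w)$, on the energy space $X=H^1(\mathcal Y)\times L^2(\mathcal Y)$, exhibit $JE$ as the generator of a $C_0$-group on $X$, and then apply the standard semilinear theory (Pazy \cite{Pa}) since $F$ is a globally Lipschitz and smooth nonlinearity from $X$ to $X$. A convenient auxiliary step is to shift: fix $k>0$ so that $\mathcal T_\lambda+kI>0$, rewrite $JE=J\widetilde E+\widetilde{\mathcal Z}$, with $\widetilde E=\mathrm{diag}(\mathcal T_\lambda+kI,I_3)$ self-adjoint and strictly positive and with $\widetilde{\mathcal Z}$ a bounded operator on $X$ (analogous to $\mathcal Z$ in \eqref{Z}); this reduces the semigroup problem to the case of a positive self-adjoint $\widetilde E$, which is the only one for which one has a direct Stone-type argument.

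First I would use Proposition \ref{T} to identify $\mathcal T_\lambda$ as a self-adjoint operator on $L^2(\mathcal Y)$, compute its Friedrichs form domain $V$, and show $V$ is continuously and densely embedded in $H^1(\mathcal Y)$ with equivalent norms (the $\delta'$-type Kirchhoff condition $\sum_j c_j v_j(0+)=\lambda c_1 v_1'(0+)$ is a natural boundary condition that disappears from the form, while the continuity-of-fluxes condition is not a trace-sense constraint in $H^1$); this yields the form characterization and allows one to endow $X=V\times L^2(\mathcal Y)$ with the (shifted) energy inner product
\begin{equation*}
\langle (u,v),(\tilde u,\tilde v)\rangle_X=\langle (\mathcal T_\lambda+kI)^{1/2}u,(\mathcal T_\lambda+kI)^{1/2}\tilde u\rangle+\langle v,\tilde v\rangle,
\end{equation*}
which is topologically equivalent to the usual $H^1(\mathcal Y)\times L^2(\mathcal Y)$ norm. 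Next, on the domain $D(J\widetilde E)=D(\mathcal T_\lambda)\times V$, the operator $J\widetilde E$ is skew-adjoint with respect to $\langle\cdot,\cdot\rangle_X$ (a direct integration-by-parts computation, using the $\delta'$ conditions to kill the boundary terms at $\nu=0$, gives $\langle J\widetilde E\Phi,\Phi\rangle_X=0$ for $\Phi\in D(J\widetilde E)$ and $(J\widetilde E)^*=-J\widetilde E$). By Stone's theorem, $J\widetilde E$ generates a unitary $C_0$-group $\{\widetilde S(t)\}_{t\in\mathbb R}$ on $X$; adding the bounded perturbation $\widetilde{\mathcal Z}$, Pazy \cite{Pa} gives that $JE=J\widetilde E+\widetilde{\mathcal Z}$ also generates a $C_0$-group $\{S(t)\}_{t\in\mathbb R}$ on $X$.

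Once the linear semigroup is in place, local existence is the classical Picard fixed point argument on the mild formulation
\begin{equation*}
\mathbf w(t)=S(t)\Psi+\int_0^t S(t-s)F(\mathbf w(s))\,ds,
\end{equation*}
in the ball of radius $2\|S\|_{L^\infty([0,T],\mathcal B(X))}\|\Psi\|_X$ in $C([0,T];X)$. The nonlinearity $F(\mathbf w)=(0,0,0,-\sin u_1,-\sin u_2,-\sin u_3)^\top$ is globally Lipschitz from $X$ to $X$ with constant $1$, because $|\sin a-\sin b|\leqq|a-b|$ pointwise, so $\|\sin u-\sin\tilde u\|_{L^2(E_j)}\leqq\|u-\tilde u\|_{L^2(E_j)}\leqq\|u-\tilde u\|_{H^1(E_j)}$. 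Choosing $T>0$ small so that the Lipschitz constant of the integral operator is less than $\tfrac12$ yields a unique fixed point $\mathbf w\in C([0,T];X)$ with $\mathbf w(0)=\Psi$. For the smoothness statement, since $\sin$ is real-analytic, $F:X\to X$ is of class $C^\infty$, and the standard implicit function / smooth dependence argument applied to the fixed-point map gives that $\Psi\mapsto\mathbf w$ is of class $C^k$ for every $k$, in particular $C^2$, on any subinterval $[0,T_0]\subset[0,T)$.

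The main obstacle, and the step that cannot be imported verbatim from the classical semilinear wave equation on $\mathbb R$, is the skew-adjointness of $J\widetilde E$ on $X$ with the $\delta'$-type domain: one must verify both that the boundary terms produced by integration by parts on the three half-lines actually cancel under the two $\delta'$ relations in \eqref{tricry2} (this reduces to $\sum_j c_j(c_j v_j'(0+)\overline{\tilde v_j(0+)}-v_j(0+)\overline{c_j\tilde v_j'(0+)})=0$, which follows from $c_1v_1'(0+)=c_2v_2'(0+)=c_3v_3'(0+)$ combined with the Kirchhoff-type line), and that no additional defect subspaces appear in the adjoint, which is where Proposition \ref{T} (deficiency indices $n_\pm(\mathcal T)=1$) and the parametrization of all self-adjoint extensions is crucial. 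Everything else, including propagation of regularity to $D(\mathcal T_\lambda)\times V$ for more regular initial data, is routine and follows the pattern used in \cite[\S 2]{AnPl-delta}.
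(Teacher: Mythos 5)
Your proposal is correct and follows essentially the same route as the paper, which simply invokes the strategy of \cite{AnPl-deltaprime} (Theorems 3.2 and 3.3): generation of a $C_0$-semigroup by $JE$ on $D(\mathcal T_\lambda)\times H^1(\mathcal Y)$, a contraction-mapping argument for the mild formulation using the global Lipschitz bound $|\sin a-\sin b|\leqq|a-b|$, and the implicit function theorem for the $C^2$ dependence on data; your write-up merely fills in the Stone-plus-bounded-perturbation details that the paper leaves to the cited reference. The only caveat worth recording is that your remark that the Kirchhoff-type condition is a natural boundary condition absorbed into the form (so that the form domain is all of $H^1(\mathcal Y)$) holds for $\lambda\neq 0$; for $\lambda=0$ it becomes an essential constraint and the energy space must be adjusted accordingly.
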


\begin{proof} By applying the same strategy as in \cite{AnPl-deltaprime} (Theorems 3.2 and 3.3), we have the following observations:
\begin{enumerate}
\item[1)] Consider the linear operators $J$ and $E$ defined in \eqref{stat2} with $(\mathcal T_\lambda, D(\mathcal T_\lambda))$ defined in Proposition \ref{T}. Then, $\mathcal A\equiv JE$ with $D(\mathcal A)= D(\mathcal T_\lambda)\times H^1(\mathcal Y)$ is the infinitesimal  generator of a $C_0$-semigroup  on $H^1(\mathcal Y)\times L^2(\mathcal Y)$.
\item[2)]  By using the contraction mapping principle, we obtain the local well-posedness result for the sine-Gordon equation \eqref{stat1} on $H^1(\mathcal{Y})\times L^2(\mathcal Y)$. The $C^2$-property of the mapping data-solution follows from the implicit function theorem.
\end{enumerate}
\end{proof}

 We are ready to draw conclusions about the linearized operator around the family of stationary solutions of the form \eqref{tricry1u} on a tricrystal junction required by the assumption in section \S \ref{seccriterium}.
\begin{proposition}
\label{coro1} 
Consider the operator $(\mathcal W_\lambda, D(\mathcal W_\lambda ))$ determined by $\mathcal{W}_\lambda \equiv \mathcal{W}$ defined in \eqref{trav23}, on the domain $D(\mathcal{W}_\lambda) = D(\mathcal{T}_{\lambda})$ defined in \eqref{tricr3}. Let $\mathcal E$  be the following diagonal-matrix 
$$
\mathcal E=\left(\begin{array}{cc} \mathcal W_\lambda & 0 \\0 & I_3\end{array}\right).
$$
  Then $J\mathcal E$ is the generator of a $C_0$-semigroup on $H^1(\mathcal{Y})\times L^2(\mathcal Y)$ with $D(J\mathcal E)=D(\mathcal W_\lambda )\times H^1(\mathcal Y)$. This implies, in turn, that assumption $(S_1)$ (see section \S \ref{seccriterium}) is satisfied.
\end{proposition}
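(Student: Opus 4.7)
The plan is to combine the semigroup generation for the free operator $JE$ (already available from the proof of Theorem \ref{well3}) with a bounded-perturbation argument in the style of Pazy. The decomposition $J\mathcal{E} = JE + \mathcal{Z}$, with $\mathcal{Z}$ the lower-triangular operator in \eqref{Z}, is the observation that reduces the proof to two pieces.

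First I would record the self-adjointness of $\mathcal{W}_\lambda$ on the domain $D(\mathcal{T}_\lambda)$. Proposition \ref{T} yields that $(\mathcal{T}_\lambda, D(\mathcal{T}_\lambda))$ is self-adjoint, while $V = \mathrm{diag}(\cos \varphi_j)$ is a bounded multiplication operator on $L^2(\mathcal{Y})$ with $\|V\| \leq 1$; the Kato--Rellich theorem then gives self-adjointness of $\mathcal{W}_\lambda = \mathcal{T}_\lambda + V$ on $D(\mathcal{T}_\lambda)$. Consequently $\mathcal{E}$ is self-adjoint on $D(\mathcal{W}_\lambda) \times L^2(\mathcal{Y})$, and in particular it is densely defined and closed, which will be needed in order to apply assumption $(S_3)$ later on in the instability analysis.

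Second I would invoke step (1) of the proof of Theorem \ref{well3}: the operator $JE$ with domain $D(JE) = D(\mathcal{T}_\lambda) \times H^1(\mathcal{Y})$ is the generator of a $C_0$-semigroup on $H^1(\mathcal{Y}) \times L^2(\mathcal{Y})$. This is the free tricrystal wave semigroup with $\delta'$-boundary conditions, and its generation property is proved by the same Lumer--Phillips/energy-space argument carried out in detail in \cite{AnPl-deltaprime} for the $\mathcal{Y}$-junction of type I, now using the tricrystal domain from Proposition \ref{T} in place of the type-I domain.

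Third I would check that $\mathcal{Z}$ is bounded on $H^1(\mathcal{Y}) \times L^2(\mathcal{Y})$. Since $\mathcal{Z}(u,v)^\top = (0, -\mathrm{diag}(\cos \varphi_j)\, u)^\top$ with $|\cos \varphi_j| \leq 1$ on each edge, multiplication by $\cos \varphi_j$ is bounded from $H^1$ into $L^2$ with norm $\leq 1$, so $\mathcal{Z}$ is bounded on the product space. The bounded-perturbation theorem for $C_0$-semigroups (Pazy, Chapter 3, Theorem 1.1) then gives that $J\mathcal{E} = JE + \mathcal{Z}$, with unchanged domain $D(\mathcal{W}_\lambda) \times H^1(\mathcal{Y})$, generates a $C_0$-semigroup on $H^1(\mathcal{Y}) \times L^2(\mathcal{Y})$, which is precisely assumption $(S_1)$. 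The only genuinely non-routine ingredient is the generation property of the free operator $JE$ itself; once this is inherited from the Cauchy-problem analysis the rest is a clean functional-analytic wrap-up.
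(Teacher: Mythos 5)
Your proposal is correct and follows essentially the same route as the paper: the decomposition $J\mathcal E=JE+\mathcal Z$, the generation property of $JE$ inherited from item 1) in the proof of Theorem \ref{well3}, and the bounded-perturbation theorem for $C_0$-semigroups (Pazy) are exactly the ingredients the paper uses. The additional Kato--Rellich remark on the self-adjointness of $\mathcal W_\lambda$ is a harmless (and accurate) supplement.
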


\begin{proof} From the relation $J\mathcal E=JE + \mathcal Z$ (see \eqref{Z} for $\zeta_j=\varphi_j$), standard semigroup theory and item 1) in the proof of Theorem \ref{well3} imply the result.
\end{proof}

\subsubsection{Kink-profile for the sine-Gordon equation on a tricrystal-junction}
\label{secprofilesII}

We will consider  stationary solutions $(\varphi_j)_{j=1}^3$ for the  sine-Gordon equation on a tricrystal-junction  of the form \eqref{tricry1u} satisfying the $\delta'$-interactions at the vertex given by \eqref{bcIIfam},  this means that $(\varphi_j)_{j=1}^3\in D(\mathcal{T}_{\lambda})$.
%
%
%
Here we shall consider the {\it full continuity case} at  the vertex, under which
\[
\varphi_1(0+)=\varphi_2(0+)= \varphi_3(0+).
\] 
Hence $b_1/c_1=b_2/c_2=b_3/c_3$ and, moreover, the conditions $c_1 \varphi'_1(0+)=c_2\varphi'_2(0+)=c_3 \varphi'_3(0+)$ hold. The Kirchhoff type condition in \eqref{bcIIfam} implies, for $y=e^{b_1/c_1}$, the relation 
\begin{equation}\label{arct}
\frac{1+ y^2}{y}\arctan(y) \sum_{j=1}^3 c_j=-\lambda.
\end{equation}
Thus from the strictly-increasing property of the positive mapping $y\mapsto \frac{1+ y^2}{y}\arctan(y)$, $y>0$, we obtain from \eqref{arct} that $\lambda\in (-\infty, -\sum_{j=1}^3 c_j)$ and  the existence of a smooth mapping $\lambda\mapsto b_1(\lambda)$ satisfying \eqref{arct}. Moreover, $\lambda\in (-\infty, -\sum_{j=1}^3c_j)\mapsto \Pi_{\lambda, \delta'}=(\varphi_{1,b_1(\lambda)}, \varphi_{2, b_2(\lambda)}, \varphi_{3, b_3(\lambda)},0,0,0)$ represents a real-analytic family of static profiles for the sine-Gordon equation on a tricrystal-junction. Thus, we have:
\begin{enumerate}
\item[1)] for  $\lambda\in (-\infty, -\frac{\pi}{2} \sum_{j=1}^3 c_j)$ we obtain that $b_i>0$ and $\varphi_i''(b_i) =0$, for every $i$. Moreover, $\varphi_i(0+)\in (\eta, 2\pi)$, $i=1,2, 3$, with $\eta=4 \arctan\big(e^{b_1/c_2}\big)>\pi$. Thus, the profile $(\varphi_1, \varphi_2,\varphi_3)$ looks like that presented in Figure \ref{figBumpYI} below (bump-type profile);
\item[2)] for $\lambda \in (-\frac{\pi}{2} \sum_{j=1}^3 c_j, -\sum_{j=1}^3 c_j)$ we obtain $b_i<0$, $ \varphi''_i>0$, $\varphi_i\in (0, \pi)$ for every $i$. Thus, the profile $(\varphi_1, \varphi_2,\varphi_3)$ is of tail-type as that of Figure \ref{figTailYII} below; 
\item[3)] for $\lambda=-\frac{\pi}{2} \sum_{j=1}^3 c_j$ we obtain $b_i=0$, $\varphi_i(0)=\pi$ and $\varphi_i''(0) =0$, for every $i$. Moreover, $\varphi_i''(x) >0$ for $x>0$. Thus, the profile $(\varphi_1, \varphi_2,\varphi_3)$ is similar to that of Figure \ref{figZeroWYII} below.
\end{enumerate}

\begin{figure}[th]
\begin{center}
\subfigure[$\lambda\in (-\infty,-\tfrac{3\pi}{2})$]{\label{figBumpYI}\includegraphics[scale=.44, clip=true]{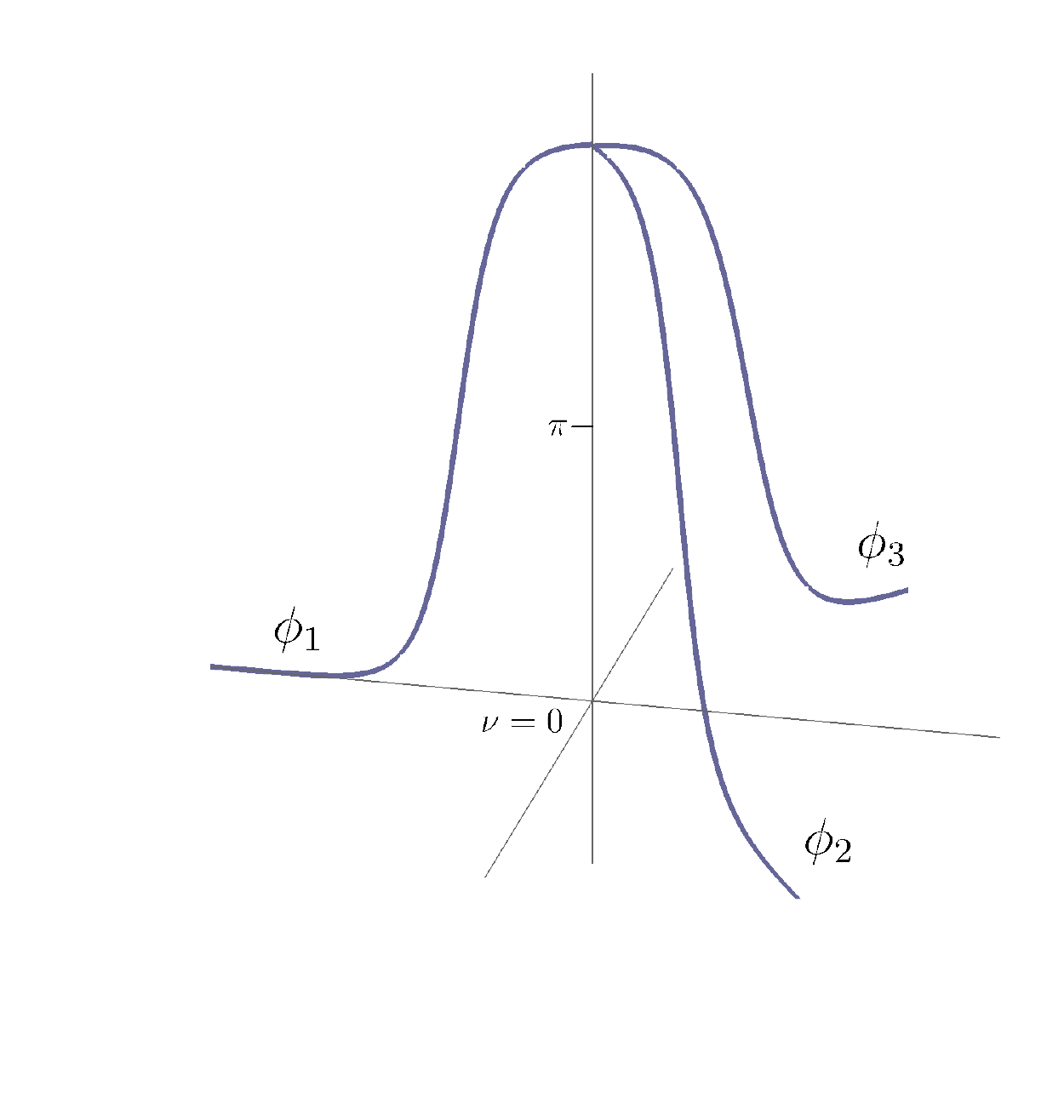}}
\subfigure[$\lambda\in (-\tfrac{3\pi}{2},-3)$]{\label{figTailYII}\includegraphics[scale=.44, clip=true]{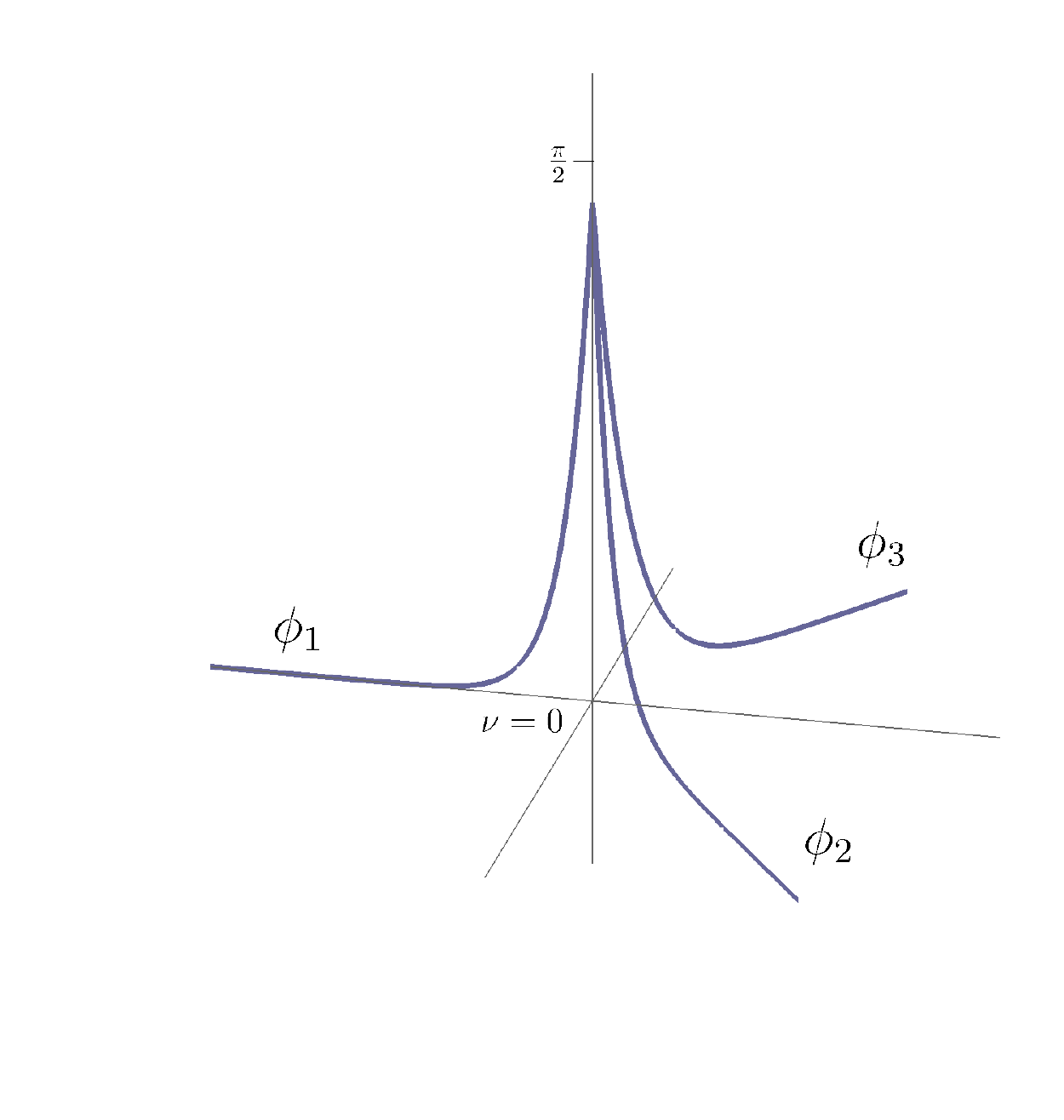}}
\subfigure[$\lambda = -\tfrac{3\pi}{2}$]{\label{figZeroWYII}\includegraphics[scale=.44, clip=true]{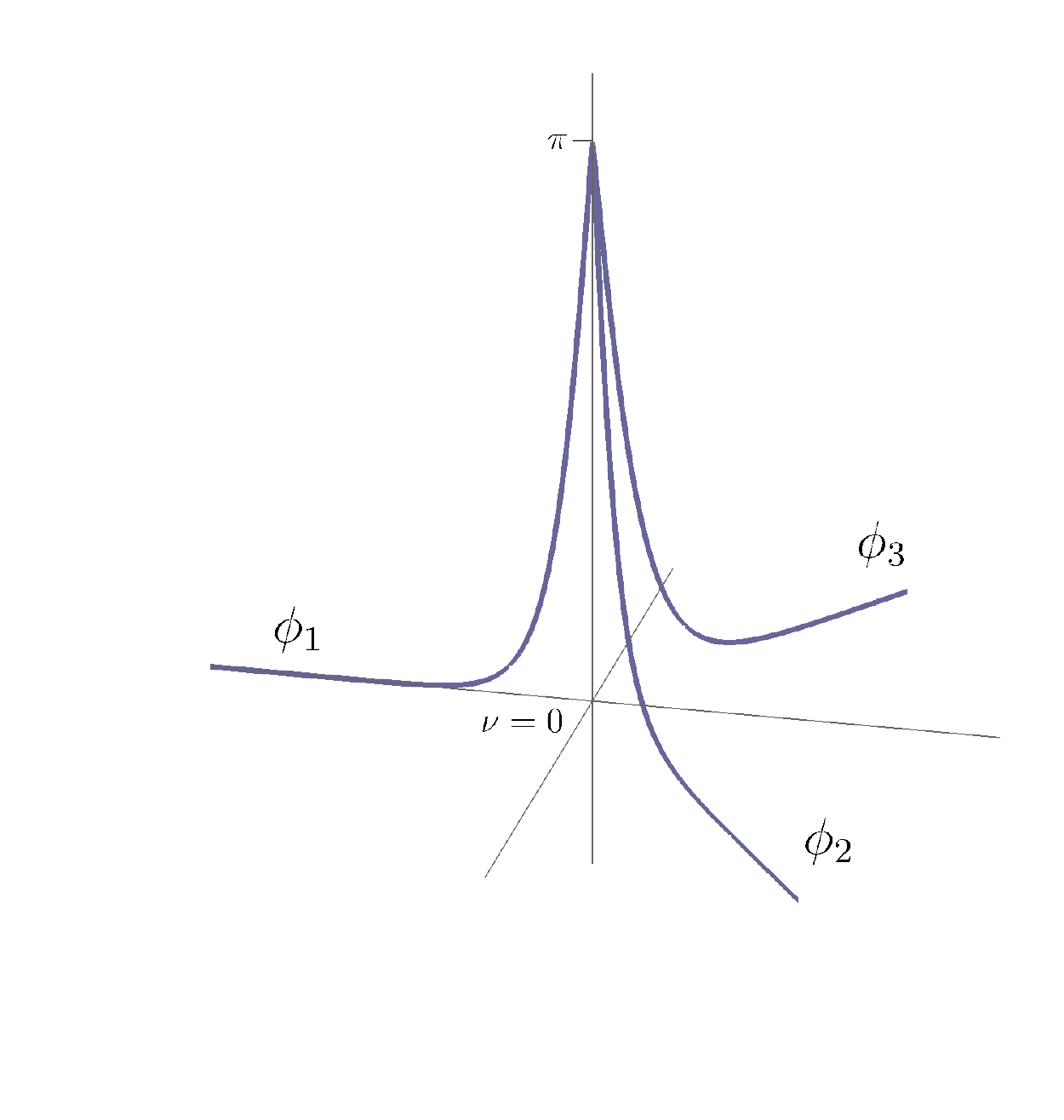}}
\end{center}
\caption{\small{Plots of stationary solutions \eqref{tricry1u} in the case where $c_j = 1$ for all $j=1,2,3$, for different values of $\lambda \in (\infty, -\sum_j c_j) = (-\infty, -3)$. Panel (a) shows the stationary profile solutions (``bump'' configuration) for the case $\lambda \in (-\infty,-\tfrac{3\pi}{2})$. Panel (b) shows the profiles of\/ ``tail'' type for the case $\lambda \in (-\tfrac{3\pi}{2},-3)$. Panel (c) shows the ``smooth'' profile solutions when $\lambda = -\tfrac{3\pi}{2}$ (color online).}}\label{figYII}
\end{figure}

The stability result for the stationary profiles, $\Pi_{\lambda,\delta'}=(\varphi_1,\varphi_2,\varphi_3, 0,0,0)$, with $\varphi_j=\varphi_{j, b_j(\lambda)}$ defined in \eqref{tricry1u} and \eqref{arct} in the continuous case is that established in Theorem \ref{trimain}.  We leave for a  possible future study the stability analysis of other kink-type profiles, such as those non-continuous at the vertex.

The proof of Theorem \ref{trimain} is a consequence of Theorem \ref{crit}. Thus we only need to verify assumption $(S_3)$ associated to the family of self-adjoint operators  $\mathcal W_\lambda$ in \eqref{trav23} (see Proposition \ref{T}),
with the domain $D(\mathcal{W}_\lambda)=D(\mathcal{T}_{\lambda})$ in \eqref{tricr3} and the kink-profiles $\varphi_j$ defined in \eqref{tricry1u}. 

\subsubsection{Spectral study for $(\mathcal W_\lambda, D(\mathcal{W}_\lambda))$ on a tricrystal-junction}

In this subsection, the  spectral behavior for $\mathcal W_\lambda$ on $D(\mathcal{W}_\lambda)$ will be study with the focus of verifying assumption $(S_3)$ in  section 2.

\begin{proposition}\label{1crystal}
Let $\lambda\in  (-\infty, -\sum_{j=1}^3 c_j)$, $c_j>0$. Then for $\lambda\neq  -\frac{\pi}{2} \sum_{j=1}^3 c_j$ we have $\ker( \mathcal{W}_\lambda )=\{0\}$. For $\lambda_0= -\frac{\pi}{2} \sum_{j=1}^3c_j$, $\ker( \mathcal{W}_{\lambda_0} )=2$. Moreover, $\sigma_{\mathrm{ess}}( \mathcal{W}_{\lambda})=[1,+\infty)$.
\end{proposition}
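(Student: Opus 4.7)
My plan is to reduce the kernel computation to a finite-dimensional algebraic problem at the vertex. First I would use that on each edge $E_j=(0,+\infty)$ the scalar Schr\"odinger operator $L_j:=-c_j^2\partial_x^2+\cos\varphi_j$ has potential tending to $\cos 0=1$ at $+\infty$, so it is limit point at infinity and the space of $H^2(0,+\infty)$-solutions of $L_jv_j=0$ has dimension at most one. Differentiating the profile equation $-c_j^2\varphi_j''+\sin\varphi_j=0$ gives $L_j\varphi_j'=0$, and the explicit formula $\varphi_j'(x)=-\tfrac{2}{c_j}\sech\bigl((x-b_j)/c_j\bigr)$ shows $\varphi_j'\in L^2(0,+\infty)$; hence the $L^2$-kernel of $L_j$ is exactly $\Span\{\varphi_j'\}$. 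Consequently any $\mathbf v=(v_j)\in\ker(\mathcal W_\lambda)$ must have the form $v_j=\alpha_j\varphi_j'$ with $(\alpha_1,\alpha_2,\alpha_3)\in\mathbb R^3$ to be fixed by the vertex conditions.

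Next I would compute the vertex traces from this ansatz. Using $\varphi_j''(0^+)=c_j^{-2}\sin\varphi_j(0^+)$ from the profile ODE together with the continuity relation $b_1/c_1=b_2/c_2=b_3/c_3$ established in \S\ref{secprofilesII}, one obtains
\[
v_j(0^+)=-\tfrac{2\alpha_j}{c_j}\sech(b_1/c_1),\qquad v_j'(0^+)=\tfrac{\alpha_j}{c_j^2}\sin\varphi_\ast,
\]
where $\varphi_\ast:=\varphi_j(0^+)$ is the common value. The $\delta'$-conditions \eqref{bcIIfam} then reduce to two linear equations in $(\alpha_1,\alpha_2,\alpha_3)$, whose behaviour depends crucially on whether $\sin\varphi_\ast$ vanishes.

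The case split follows directly from the profile analysis: $\varphi_\ast=\pi$ (hence $\sin\varphi_\ast=0$) exactly when $b_j=0$, i.e.\ when $\lambda=\lambda_0:=-\tfrac{\pi}{2}\sum_jc_j$, and $\varphi_\ast\in(0,\pi)\cup(\pi,2\pi)$ otherwise. For $\lambda\neq\lambda_0$, the flux-continuity condition forces $\alpha_j/c_j$ to be independent of $j$; writing $\alpha_j=\beta c_j$ and inserting into the Kirchhoff-type relation, I would use the parametrization $y:=e^{b_1/c_1}$, the identity $\lambda=-\tfrac{(1+y^2)\arctan y}{y}\sum_jc_j$ coming from \eqref{arct}, and the multiple-angle formula $\sin(4\arctan y)=\tfrac{4y(1-y^2)}{(1+y^2)^2}$ to collapse the remaining condition to
\[
\beta\bigl(\,y-(1-y^2)\arctan y\,\bigr)=0.
\]
A short calculus check---namely, $g(y):=\tfrac{y}{1-y^2}-\arctan y$ satisfies $g(0)=0$ and $g'(y)=\tfrac{4y^2}{(1-y^2)^2(1+y^2)}>0$ on $(0,1)$, while the two sides have opposite signs for $y>1$---shows that the bracket vanishes only at $y=1$, which is exactly the excluded value $\lambda=\lambda_0$. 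Thus $\beta=0$ and $\ker(\mathcal W_\lambda)=\{0\}$. For $\lambda=\lambda_0$ the vanishing of $\sin\varphi_\ast$ makes the flux-continuity condition automatic and reduces the Kirchhoff-type condition to $\alpha_1+\alpha_2+\alpha_3=0$, carving out a two-dimensional subspace of $\mathbb R^3$, so $\dim\ker(\mathcal W_{\lambda_0})=2$.

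For the essential spectrum I would invoke Weyl's stability theorem: since $\cos\varphi_j(x)\to 1$ exponentially as $x\to+\infty$, the multiplier $\cos\varphi_j-1$ is a relatively compact perturbation of $-c_j^2\partial_x^2+1$, and this perturbation only alters the large-$x$ behaviour while leaving the vertex domain untouched. Combined with Proposition \ref{T}, which gives $\sigma_{\mathrm{ess}}(\mathcal T_\lambda)=[0,+\infty)$ for $\lambda\neq 0$, one obtains $\sigma_{\mathrm{ess}}(\mathcal W_\lambda)=\sigma_{\mathrm{ess}}(\mathcal T_\lambda+I_3)=[1,+\infty)$. The main obstacle I anticipate is the algebraic reduction at the vertex, in particular tracking how the family of parameters $b_j(\lambda)$ and $\varphi_\ast(\lambda)$ interact with the $\delta'$-conditions to produce the clean one-parameter equation $y=(1-y^2)\arctan y$; once that is in hand, the resulting scalar monotonicity check and the dimension count are routine.
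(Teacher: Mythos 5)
Your argument is correct and follows essentially the same route as the paper's proof: Sturm--Liouville theory on each half-line reduces any kernel element to $v_j=\alpha_j\varphi_j'$, the vertex conditions then become a linear system whose solvability hinges on whether $\varphi_j''(0)=0$ (i.e.\ $\lambda=-\tfrac{\pi}{2}\sum_j c_j$), and Weyl's theorem gives the essential spectrum. Your unified scalar equation $y=(1-y^2)\arctan y$ in $y=e^{b_1/c_1}$ is just a more explicit way of carrying out the sign/value comparison that the paper performs case by case, and it correctly supplies the detail the paper leaves terse for $\lambda\in\bigl(-\tfrac{\pi}{2}\sum_j c_j,\,-\sum_j c_j\bigr)$.
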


\begin{proof}  We consider $\bold{u}=(u_1, u_2, u_3)\in D(\mathcal{W}_\lambda)$ and $\mathcal{W}_\lambda \bold{u}=\bold{0}$. Then, from Sturm-Liouville theory on half-lines (see \cite{BeSh}, Chapter 2, Theorem 3.3), $u_j(x)=\alpha_j\varphi'_j(x)$, $x>0$, $j=1,2,3$. Thus,  for $\lambda\neq  -\frac{\pi}{2} \sum_{j=1}^3 c_j$ we obtain $\alpha_1/c_1=\alpha_2/c_2=\alpha_3/c_3$. Next, from the jump conditions for $\bold{u}$ and $\Psi_{\lambda,\delta'}=(\varphi_j)$,  we obtain
\begin{equation}\label{tricry5}
\alpha_1\varphi'_1(0)\sum_{j=1}^3 c_j=\alpha_1\lambda c_1 \varphi''_1(0).
\end{equation}
Next, suppose $\alpha_1\neq 0$. Since   $\varphi'_1(0)<0$ and for $\lambda\in  (-\infty, -\frac{\pi}{2}\sum_{j=1}^3 c_j)$ we have $\varphi''_1(0)<0$,  relation in \eqref{tricry5} implies a contradiction because of $c_j>0$. Now, considering $\lambda\in  ( -\frac{\pi}{2}\sum_{j=1}^3 c_j, -\sum_{j=1}^3 c_j)$ and from the specific values of $\varphi'_1(0), \varphi''_1(0)$ we obtain from \eqref{tricry5} again a contradiction. Thus, from the two cases above we need to have $\alpha_1=\alpha_2=\alpha_3=0$.  For $\lambda_0= -\frac{\pi}{2} \sum_{j=1}^3c_j$ we recall that $\varphi''_j(0)=0$ for every $j$. Hence, from the jump-condition for $\bold u$ follows $\sum_{j=1}^3 \alpha_j=0$. Then $\Psi_1=(\varphi'_1, -\varphi'_2, 0)$ and $\Psi_2=(0,\varphi'_2, -\varphi'_3)$ belong to $D(\mathcal W_{\lambda_0})$ and $\Span\{\Psi_1, \Psi_2\}=\ker(\mathcal W_{\lambda_0})$.

The statement $\sigma_{\mathrm{ess}}(\mathcal{W}_\lambda)=[1,+\infty)$ is an immediate consequence of Weyl's Theorem because of $\lim_{x\to +\infty} \cos(\varphi_1(x))=1=\lim_{x\to +\infty} \cos(\varphi_j(x))$ (see \cite{RS4}). This finishes the proof.
\end{proof}

\begin{proposition}\label{2crystal}
Let $\lambda\in [-\frac{\pi}{2} \sum_{j=1}^3 c_j, -\sum_{j=1}^3 c_j)$. Then $n( \mathcal{W}_\lambda )=1$.
\end{proposition}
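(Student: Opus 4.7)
The plan is to establish the bounds $n(\mathcal{W}_\lambda)\leq 1$ and $n(\mathcal{W}_\lambda)\geq 1$ separately via a quadratic-form analysis. Integrating by parts against $\mathbf{u}\in D(\mathcal{W}_\lambda)$ and using the $\delta'$-rules \eqref{bcIIfam}, the closed form associated with $\mathcal{W}_\lambda$ extends to $H^1(\mathcal{Y})$ as
\[
\tilde{q}_\lambda(\mathbf{u}) = \sum_{j=1}^3\int_0^\infty\!\big[c_j^2|u_j'|^2 + \cos(\varphi_j)|u_j|^2\big]\,dx + \frac{1}{\lambda}\Big|\sum_{j=1}^3 c_j u_j(0+)\Big|^2.
\]
The key algebraic tool I would employ is a Hardy-type factorization: since $\varphi_j'<0$ pointwise on $[0,\infty)$, every $u\in H^1(0,\infty)$ factors as $u=\varphi_j'\psi$, and using that $\varphi_j'$ solves the linearized equation $-c_j^2 v'' + \cos(\varphi_j)v = 0$ (obtained by differentiating \eqref{kinkequa}), integration by parts in the cross term $2\varphi_j'\varphi_j''\psi\psi'$ collapses the bulk to yield
\[
q_j(u) := \int_0^\infty\!\big[c_j^2|u'|^2 + \cos(\varphi_j)|u|^2\big]\,dx = -c_j^2\frac{\varphi_j''(0+)}{\varphi_j'(0+)}\,|u(0+)|^2 + \int_0^\infty\! c_j^2\Big|u' - \tfrac{\varphi_j''}{\varphi_j'}u\Big|^2\,dx,
\]
first for $u\in C_c^\infty([0,\infty))$ and then by density, using the boundedness of $\varphi_j''/\varphi_j'$ on $[0,\infty)$.

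For the upper bound, I would note that in the range $\lambda\in[-\tfrac{\pi}{2}\sum_j c_j,-\sum_j c_j)$ the kink parameter satisfies $b_j\leq 0$, whence $\varphi_j(0+)\in(0,\pi]$, $\sin(\varphi_j(0+))\geq 0$, and hence $\varphi_j''(0+)=c_j^{-2}\sin(\varphi_j(0+))\geq 0$. Since $\varphi_j'(0+)<0$, the prefactor $-\varphi_j''(0+)/\varphi_j'(0+)$ is non-negative, so $q_j\geq 0$ on $H^1(0,\infty)$ and the decoupled form $q^N(\mathbf{u}):=\sum_j q_j(u_j)$ is non-negative on $H^1(\mathcal{Y})$. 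If $n(\mathcal{W}_\lambda)\geq 2$, then $\tilde{q}_\lambda$ would be strictly negative on some two-dimensional subspace $W\subset H^1(\mathcal{Y})$. Since the continuous functional $\ell(\mathbf{u}):=\sum_j c_j u_j(0+)$ is non-zero, its kernel intersects $W$ in dimension at least one; but on $W\cap\ker\ell$ the boundary term of $\tilde{q}_\lambda$ vanishes, so $\tilde{q}_\lambda=q^N\geq 0$ there, contradicting strict negativity. Therefore $n(\mathcal{W}_\lambda)\leq 1$.

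For the lower bound I would take the explicit test $\mathbf{u}^\star:=(c_1\varphi_1',c_2\varphi_2',c_3\varphi_3')\in H^1(\mathcal{Y})$ and evaluate $\tilde{q}_\lambda(\mathbf{u}^\star)$ using the closed-form identities $\varphi_j'(0+)=-2c_j^{-1}\sech(\alpha)$ and $\sin(\varphi_j(0+))=-2\sech(\alpha)\tanh(\alpha)$, where $\alpha:=b_j/c_j\leq 0$ is common across edges by continuity at the vertex; together with the defining relation $|\lambda|=2\cosh(\alpha)\arctan(e^\alpha)\sum_j c_j$ this should yield
\[
\tilde{q}_\lambda(\mathbf{u}^\star) = 4\sech^2(\alpha)\Big(\sum_{j=1}^3 c_j\Big)\Big[|\tanh(\alpha)| - \frac{1}{2\cosh(\alpha)\arctan(e^\alpha)}\Big],
\]
whose negativity reduces to the scalar inequality $2|\sinh(\alpha)|\arctan(e^\alpha)<1$ for $\alpha\leq 0$. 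I would verify this by monotonicity of $h(\beta):=1/\sinh(\beta)-2\arctan(e^{-\beta})$ on $(0,\infty)$: direct differentiation yields $h'(\beta)=-1/(\sinh^2(\beta)\cosh(\beta))<0$, and $h(\beta)\to 0^+$ as $\beta\to\infty$, so $h>0$ throughout. The main technical hurdle I anticipate is the careful passage from the Hardy identity on $C_c^\infty([0,\infty))$ to all of $H^1(0,\infty)$; once this is handled, the two bounds combine to give $n(\mathcal{W}_\lambda)=1$.
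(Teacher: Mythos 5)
Your argument is correct, and it is worth separating its two halves. For the upper bound $n(\mathcal{W}_\lambda)\leq 1$ you are doing essentially what the paper does: your Hardy-type factorization of $q_j$ is exactly the identity \eqref{spec7}--\eqref{7spec}, and your ``restrict to $\ker\ell$ and lose one dimension'' step is the quadratic-form incarnation of the paper's extension-theory argument (the symmetric restriction $\widetilde{T}$ has deficiency indices $n_\pm=1$ and is non-negative, so by Proposition \ref{semibounded} every self-adjoint extension has at most one negative eigenvalue); the two mechanisms are interchangeable here. For the lower bound the routes genuinely differ: the paper tests with $\Psi_{\lambda,\delta'}=(\varphi_1,\varphi_2,\varphi_3)\in D(\mathcal{W}_\lambda)$ and obtains negativity from the elementary inequality $x\cos x\leq \sin x$ on $[0,\pi]$ (formula \eqref{negaqua}), whereas you test with $(c_j\varphi_j')_{j=1}^3$, which annihilates the bulk remainder in your factorization and reduces everything to the explicit scalar inequality $2|\sinh\alpha|\arctan(e^{\alpha})<1$ for $\alpha\leq 0$. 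I checked your closed-form identities ($\varphi_j'(0+)=-2c_j^{-1}\sech\alpha$, $c_j^2\varphi_j''(0+)=-2\sech\alpha\tanh\alpha$, $|\lambda|=2\cosh\alpha\arctan(e^{\alpha})\sum_j c_j$) and the monotonicity computation for $h$; they are all correct, and the endpoint $\alpha=0$ (i.e.\ $\lambda=-\tfrac{\pi}{2}\sum_j c_j$) is covered since the bracket equals $-2/\pi<0$ there. The one point your version must make explicit, which the paper's choice of test function sidesteps: $(c_j\varphi_j')_j$ satisfies the derivative-continuity condition but \emph{not} the Kirchhoff condition $\sum_j c_ju_j(0+)=\lambda c_1u_1'(0+)$ (that would force $2|\sinh\alpha|\arctan(e^{\alpha})=1$), so it lies in $H^1(\mathcal Y)$ but not in $D(\mathcal{W}_\lambda)$, and your min--max step therefore requires knowing that $\tilde q_\lambda$ on $H^1(\mathcal Y)$ is the closed sesquilinear form whose associated self-adjoint operator is $\mathcal{W}_\lambda$. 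This is standard for $\delta'$-type vertex couplings, and the paper itself uses exactly this form in \eqref{antiqua} for the anti-kink case, but it should be stated (or cited) rather than assumed; with that supplied, your proof is complete.
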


\begin{proof} We will use  the extension theory approach, which is based on the fact that the family $(\mathcal W_\lambda, D(\mathcal W_\lambda))$ represents all the self-adjoint extensions  of the closed symmetric operator $(\widetilde{T}, D(\widetilde{T}))$ with $D( \widetilde{T})\equiv D( \mathcal T)$ defined in \eqref{tricr2}, and 
\begin{equation}\label{tricry6}
\widetilde{T}=\Big (\Big(-c_j^2\frac{d^2}{dx^2}+\cos(\varphi_j)
\Big)\delta_{j,k} \Big ),\;\l1\leqq j, k\leqq 3,\\
\end{equation}
with $n_{\pm}(\widetilde{T})=1$. Next, we show that  $\widetilde{T}\geqq 0$. If we denote $L_j=-c_j^2\frac{d^2}{dx^2}+\cos(\varphi_j)$ then  we obtain 
\begin{equation}
\label{spec7}
L_j\psi=-\frac{1}{\varphi'_j} \frac{d}{dx}\Big[c_j^2  (\varphi'_j)^2 \frac{d}{dx}\Big(\frac{\psi}{\varphi'_j}\Big)\Big],
\end{equation}
for any $\psi$. It is to be observed that  $\varphi'_j\neq 0$. Then  we have for any $\Lambda=(\psi_j)\in D(\widetilde{T})$,
\begin{equation}\label{7spec}
\begin{split}
\langle \widetilde{T} \Lambda,\Lambda\rangle= A -\sum_{j=1}^3c_j^2\psi^2_j(0)\frac{\varphi''_j(0)}{\varphi'_j(0)}\equiv A+P,
\end{split}
\end{equation}
where $A\geqq 0$ represents the integral terms. Next we show that $P\geqq0$. Indeed, since $\varphi_j''(0)\geqq 0$ and  $\varphi_j'(0)<0$, for $j=1,2,3$, we obtain immediately $P\geqq 0$. Then, $ \widetilde{T}\geqq 0$.

Due to  Proposition \ref{semibounded} (see Appendix),  $n(\mathcal W_\lambda)\leqq 1$. Next, for $\Psi_{\lambda, \delta'}=(\varphi_1, \varphi_2, \varphi_3)\in D(\mathcal W_\lambda)$ 
 we obtain
\begin{equation}\label{negaqua}
\langle \mathcal W_\lambda \Psi_{\lambda, \delta'}, \Psi_{\lambda, \delta'}\rangle=\sum_{j=1}^3\int_0^{+\infty}[-\sin(\varphi_j)+\cos(\varphi_j)\phi_j]\varphi_jdx<0,
\end{equation}
because of $0<\varphi_j(x)\leqq \pi$ and $x\cos x\leqq \sin x$ for all $x\in [0, \pi]$.  Then from the minimax principle we  arrive at $n(\mathcal W_\lambda)=1$. This finishes the proof.
\end{proof}

\begin{proof}[Proof of Theorem \ref{trimain}]
Let $\lambda\in (-\frac{\pi}{2} \sum_{j=1}^3 c_j, -\sum_{j=1}^3 c_j)$. Then,  from Propositions \ref{1crystal} and \ref{2crystal} we have $\ker( \mathcal{W}_\lambda)=\{0\}$ and $n( \mathcal{W}_\lambda)=1$. Thus, from Theorem \ref{well3}, Corollary \ref{coro1} and  Theorem \ref{crit} follows the linear instability  property of the stationary profile $\Pi_{\lambda, \delta'}=(\varphi_1,\varphi_2, \varphi_3, 0,0,0)$. 

Next, we consider the closed subspace in $L^2(\mathcal Y)$, $\mathcal C_1=\{(u_j)_{j=1}^3\in L^2(\mathcal Y): u_1=u_2=u_3\}$,
 and the case $c_1=c_2=c_3$ (hence $b_1=b_2=b_3$). Then, we can show  that on 
$\mathcal B_1=\mathcal C_1 \cap D (\mathcal{W}_{\lambda})$  we obtain $\mathcal{W}_{\lambda}:\mathcal B_1\to \mathcal C_1$ is well-defined. Moreover, we note that for $\lambda_0=-\frac{\pi}{2} \sum_{j=1}^3c_j$ follows $\ker( \mathcal{W}_{\lambda_0}|_{\mathcal B_1})=\{0\}$ and $n( \mathcal{W}_{\lambda_0}|_{\mathcal B_1})=1$ (because $\Psi_{\lambda_0, \delta'}=(\varphi_1, \varphi_1, \varphi_1)\in \mathcal B_1$ and $\langle \mathcal W_{\lambda_0} \Psi_{\lambda_0, \delta'}, \Psi_{\lambda_0, \delta'}\rangle<0$).  Therefore, from Kato-Rellich Theorem,  analytic perturbation and a continuation argument (see  Angulo and Plaza  \cite{AnPl-delta} and/or Propositions \ref{antimain4}-\ref{Appanti} below), we can see that for all  $\lambda\in (-\infty, -\frac{\pi}{2} \sum_{j=1}^3c_j]$  the following relations hold: $\ker( \mathcal{W}_{\lambda}|_{\mathcal B_1})=\{0\}$ and $n( \mathcal{W}_{\lambda}|_{\mathcal B_1})=1$. Thus, the static profiles $\Pi_{\lambda, \delta'}$ are also  linearly unstable in this case.

Now, since the mapping data-solution for the sine-Gordon model on $H^1(\mathcal Y)\times L^2(\mathcal Y)$ is at least of class $C^2$ (indeed, it is smooth) by Theorem \ref{well3}, it follows that the linear instability property of $\Psi_{\lambda, \delta'}$   is in fact  of nonlinear type  in the $H^1(\mathcal Y)\times L^2(\mathcal Y)$-norm (see Henry {\it{et al.}} \cite{HPW82}, Angulo and Natali \cite{AngNat16}, and Angulo {\it{et al.}} \cite{ALN08}). This finishes the proof.
\end{proof}

\begin{remark}\label{fora}
For the case $\lambda\in (-\infty, -\frac{\pi}{2} \sum_{j=1}^3 c_j)$ in Proposition \ref{2crystal}, the formula for $P$ in \eqref{7spec}  satisfies $P<0$. Therefore, it is not clear whether the extension theory approach provides an estimate of the Morse-index of $\mathcal{W}_\lambda$; see also the related Remark 4.4 in \cite{AnPl-delta}.
\end{remark}

\subsection{Kink/anti-kink instability theory on a tricrystal junction}

In this subsection we study the existence and stability of kink/anti-kink profile in \eqref{antikink}.  Since these stationary profiles do not belong to the classical $H^2(\mathcal Y)$-energy space, we need to work with specific functional spaces suitable for our needs.

\subsubsection{The anti-kink/kink solutions  on a tricrystal junction}

In the sequel, we describe the profiles $(\varphi_j)_{j=1}^3$ in  \eqref{antikink} satisfying  the $\delta'$-condition in \eqref{bcIIfam}.  Thus, we obtain the following relations:
\begin{equation}\label{anti1}
\sech\big(\frac{a_1}{c_1}\big)=\sech\big(\frac{a_2}{c_2}\big)=\sech\big(\frac{a_3}{c_3}\big),
\end{equation}
and 
\begin{equation}\label{anti2}
c_1\arctan(e^{-a_1/c_1})+ c_2\arctan(e^{-a_2/c_2})+c_3\arctan(e^{-a_3/c_3})-\frac{(c_2+c_3)\pi}{2}=\frac{\lambda}{2} \sech \big( \frac{a_1}{c_1} \big).
\end{equation}

Here, we consider the specific class of anti-kink/kink profiles with the condition $\frac{a_1}{c_1}=\frac{a_2}{c_2}=\frac{a_3}{c_3}$. Thus, from \eqref{anti2}  and $y=e^{-a_1/c_1}$ we get the following equality
\begin{equation}\label{0anti3}
F(y)\equiv \frac{1+y^2}{y}\Big[\Big(\sum_{i=1}^3c_i\Big)\arctan(y)-\frac{(c_2+c_3)\pi}{2}\Big]=\lambda.
\end{equation}

Next, we show that the mapping $F$ is strictly increasing for $y>0$. Indeed, since for $y\in (0,1)$ we have that $\frac{1}{y}+ \frac{y^2-1}{y^2}\arctan(y)>0$, then $F'(y)>0$. Now, for $\theta=\sum_{i=1}^3c_i$, the relation
\begin{equation}\label{0anti4}
F'(y)=\theta\Big[\frac{1}{y}+ \frac{y^2-1}{y^2}\Big(\arctan(y)-\frac{\pi}{2}\Big)\Big]+\theta\Big[\frac{\pi}{2}-\frac{(c_2+c_3)\pi}{2 \sum_{i=1}^3c_i}\Big]\frac{y^2-1}{y^2}
\end{equation}
shows that  $F'(y)>0$ for $y\in [1, +\infty)$. Moreover, it is no difficult to see that $\lim_{y\to 0^+} F(y)=-\infty$, and $\lim_{y\to +\infty} F(y)=+\infty$. 

Then, from \eqref{0anti3} we have the following specific behavior of the $\lambda$-parameter:
\begin{enumerate}
\item[a)] for $a_1=0$, $\lambda=-\frac{\pi}{2}(c_2+c_3-c_1)$,
\item[b)] for $a_1>0$, $\lambda\in (-\infty, -\frac{\pi}{2}(c_2+c_3-c_1))$,
\item[c)] for $a_1<0$, $\lambda\in ( -\frac{\pi}{2}(c_2+c_3-c_1), +\infty)$.
\end{enumerate}
Moreover, from \eqref{0anti3} and the properties for $F$ we obtain the existence of a smooth shift-map (also real analytic) $\lambda\in (-\infty, +\infty)\mapsto a_1(\lambda)$ satisfying $F(e^{-a_1(\lambda)/c_1})=\lambda$. Thus,  the mapping 
$$
\lambda\in (-\infty, +\infty) \, \mapsto \Phi_{\lambda, \delta'}=(\varphi_{1,a_1(\lambda)}, \varphi_{2, a_2(\lambda)}, \varphi_{3, a_3(\lambda)},0,0,0),
$$
 represents a real-analytic family of static profiles for the sine-Gordon equation \eqref{sg2} on a tricrystal junction,  with 
 the profiles $\varphi_{j,a_j(\lambda)}$ satisfying the boundary condition in \eqref{antikink}. Hence we obtain, for $a_i=a_i(\lambda)$ and $\varphi_i=\varphi_{i,a_i(\lambda)}$, the following behavior:
\begin{enumerate}
\item[1)]  for $\lambda=-\frac{\pi}{2}(c_2+c_3-c_1)$ we obtain $a_1=a_2=a_3=0$,  $\varphi_2(0)=\varphi_3(0)=-\pi$, $\varphi_1(0)=\pi$, $\varphi_i''(0) =0$, $i=1,2, 3$. Thus, the profile of $(\varphi_1, \varphi_2,\varphi_3)$ represents one-half positive anti-kink  $\varphi_1$ and two-half negative kink solitons profiles  $\varphi_2$, $\varphi_3$ (connected in the vertex of the graph) such as Figure \ref{figTCZero} shows below; 

\begin{figure}[h]
\begin{center}
\subfigure[Kink/anti-kink profiles, $\varphi_j$.]{\label{figTCZeroProfiles}\includegraphics[scale=.55, clip=true]{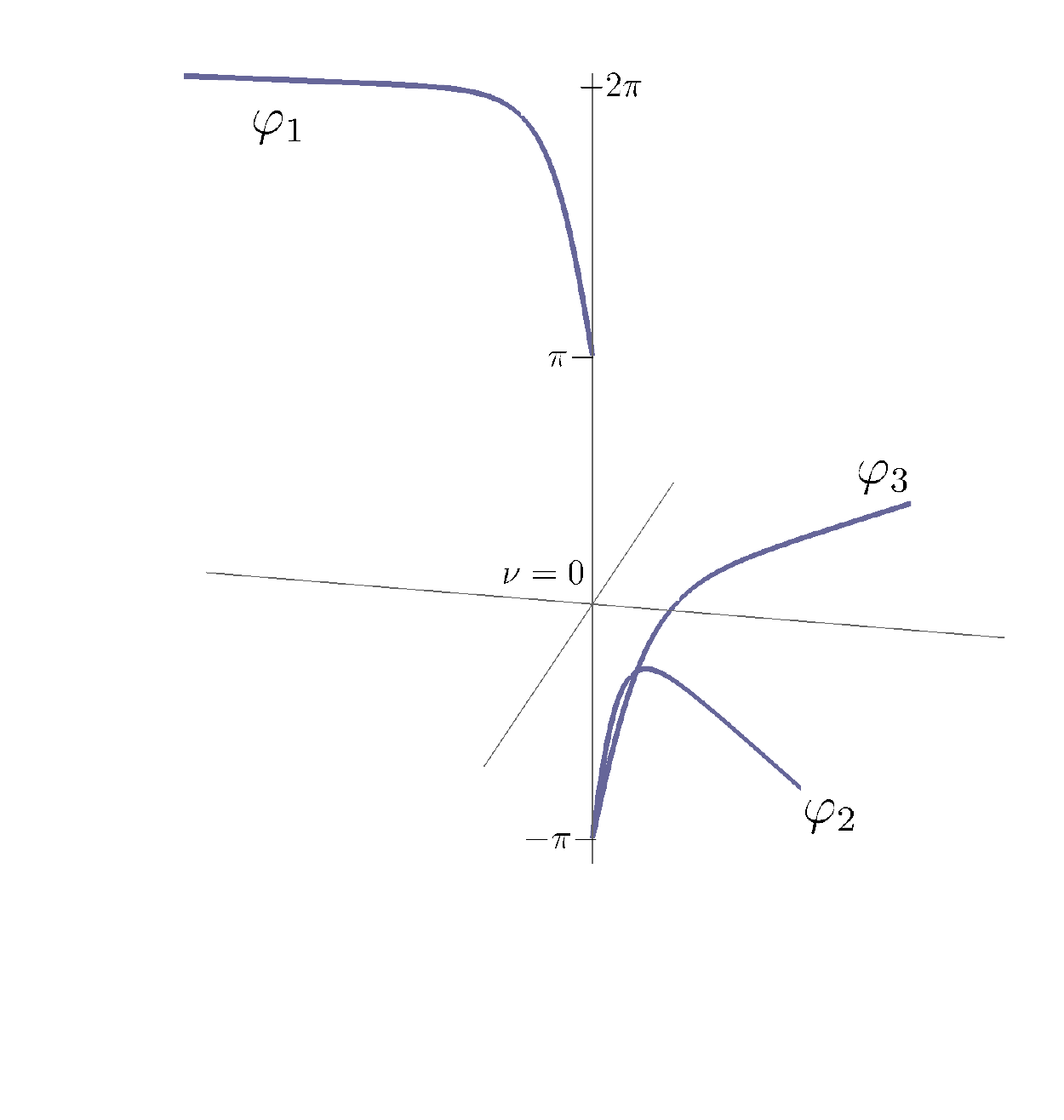}}
\subfigure[``Fluxons'', $\varphi_j'$.]{\label{figTCZeroDer}\includegraphics[scale=.55, clip=true]{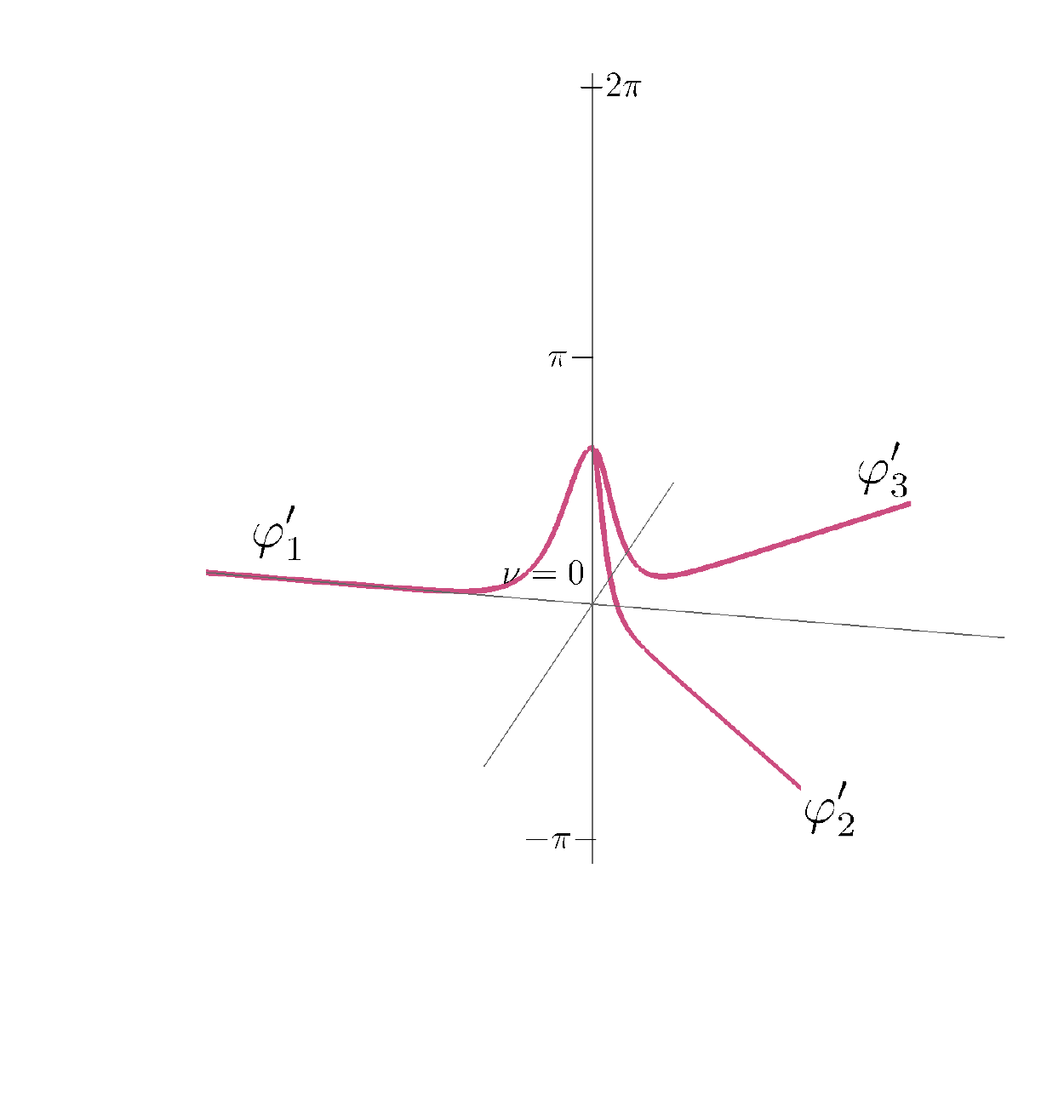}}
\end{center}
\caption{\small{Plots of the kink/anti-kink profiles \eqref{antikink} (panel (a), in blue) and their corresponding derivatives or ``fluxons'' (panel (b), in red) in the case where $c_j = 1$, $a_j = 0$, $j=1,2,3$, and $\lambda = - \tfrac{\pi}{2} (c_2+c_3-c_1) = - \tfrac{\pi}{2}$. Both plots are depicted in the same scale for comparison purposes (color online).}}\label{figTCZero}
\end{figure}

\item[2)] for  $\lambda\in (-\infty, -\frac{\pi}{2}(c_2+c_3-c_1))$ we obtain $a_1>0$, $\varphi_i''(a_1)=0$,  $i=1,2, 3$. Therefore, $\varphi_1(0)\in (0, \pi)$ and  $\varphi_2(0)=\varphi_3(0)\in (-\pi, -2\pi)$.
Thus, the profile of $(\varphi_1, \varphi_2,\varphi_2)$, looks similar to the one shown in Figure \ref{figTCBump} below (bump-profile type); 

\begin{figure}[h]
\begin{center}
\subfigure[Kink/anti-kink profiles, $\varphi_j$.]{\label{figTCBumpProfiles}\includegraphics[scale=.55, clip=true]{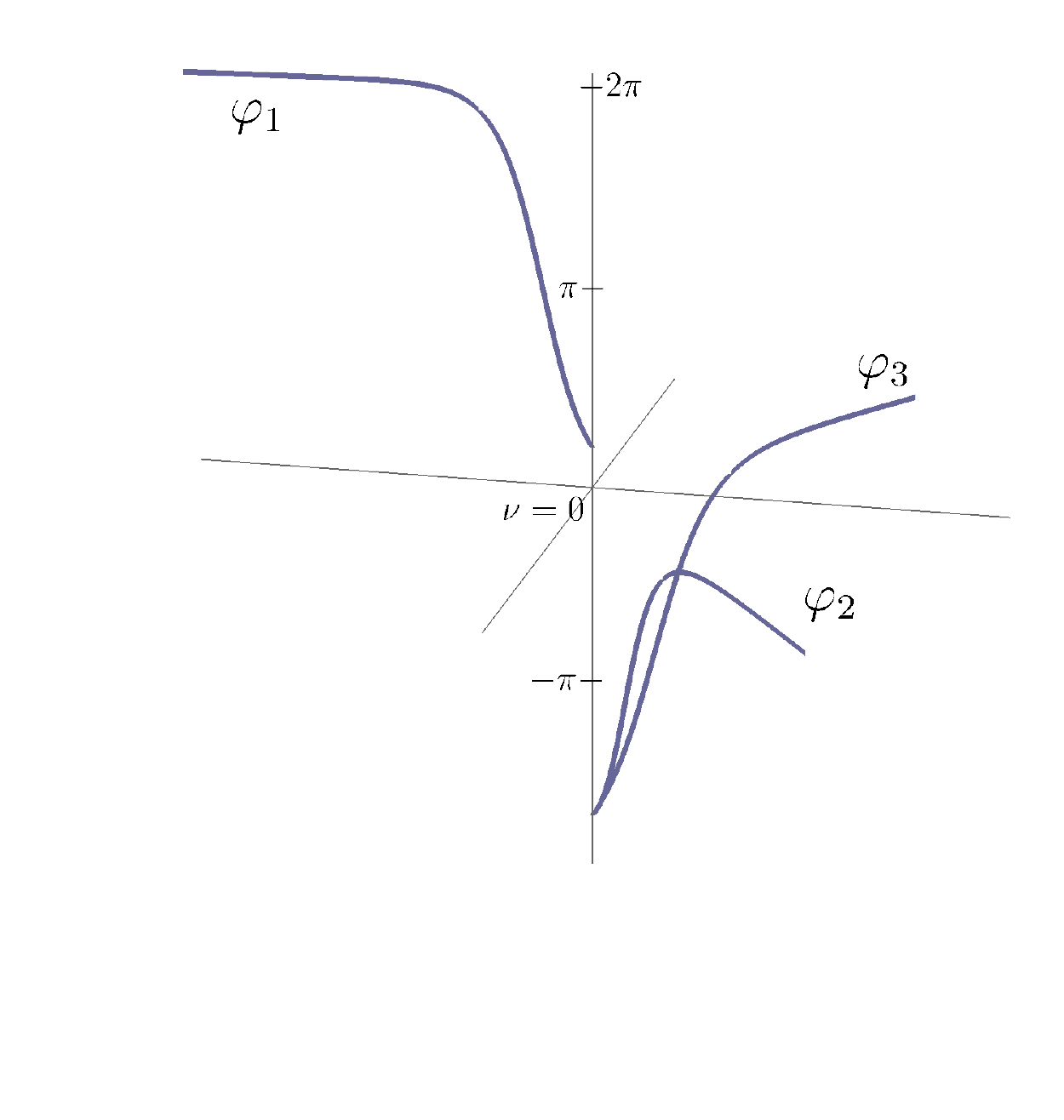}}
\subfigure[``Fluxons'', $\varphi_j'$.]{\label{figTCBumpDer}\includegraphics[scale=.55, clip=true]{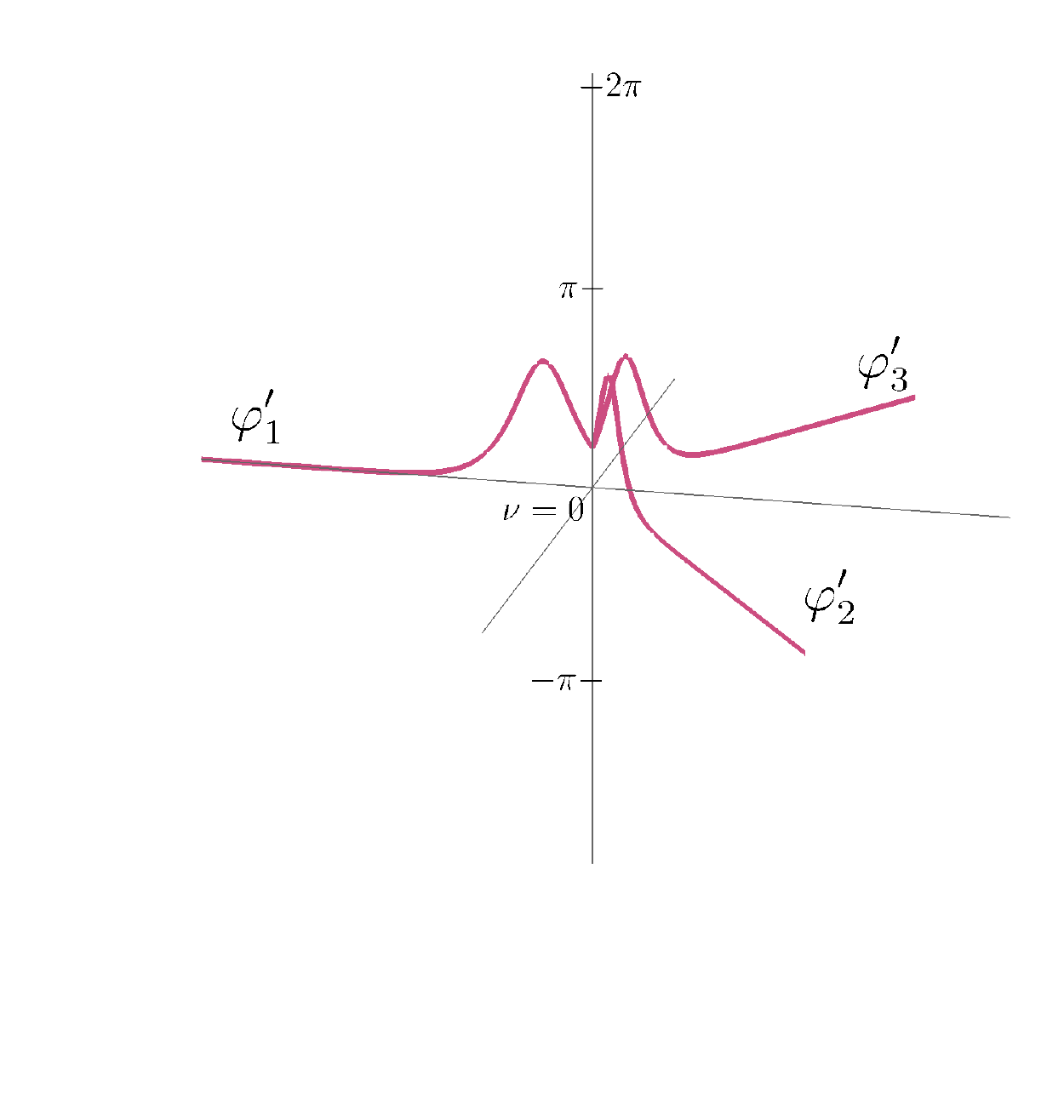}}
\end{center}
\caption{\small{Plots of the kink/anti-kink profiles \eqref{antikink} (panel (a), in blue) and their corresponding derivatives or ``fluxons'' (panel (b), in red) in the case where $c_j = 1$, $a_j = 1.8 > 0$, $j=1,2,3$, and $\lambda \in  (-\infty, - \tfrac{\pi}{2})$. Both plots are depicted in the same scale for comparison purposes (color online).}}\label{figTCBump}
\end{figure}

\item[3)] for   $\lambda\in (-\frac{\pi}{2}(c_2+c_3-c_1), +\infty)$ we obtain $a_1<0$ and 
 therefore   $ \varphi''_i<0$ for $i=1,2,3$,   Thus, the profile of $(\varphi_1, \varphi_2,\varphi_3)$ looks similar to that in Figure \ref{figTCTail} below (typical tail-profile). 
\end{enumerate}

\begin{figure}[h]
\begin{center}
\subfigure[Kink/anti-kink profiles, $\varphi_j$.]{\label{figTCTailProfiles}\includegraphics[scale=.55, clip=true]{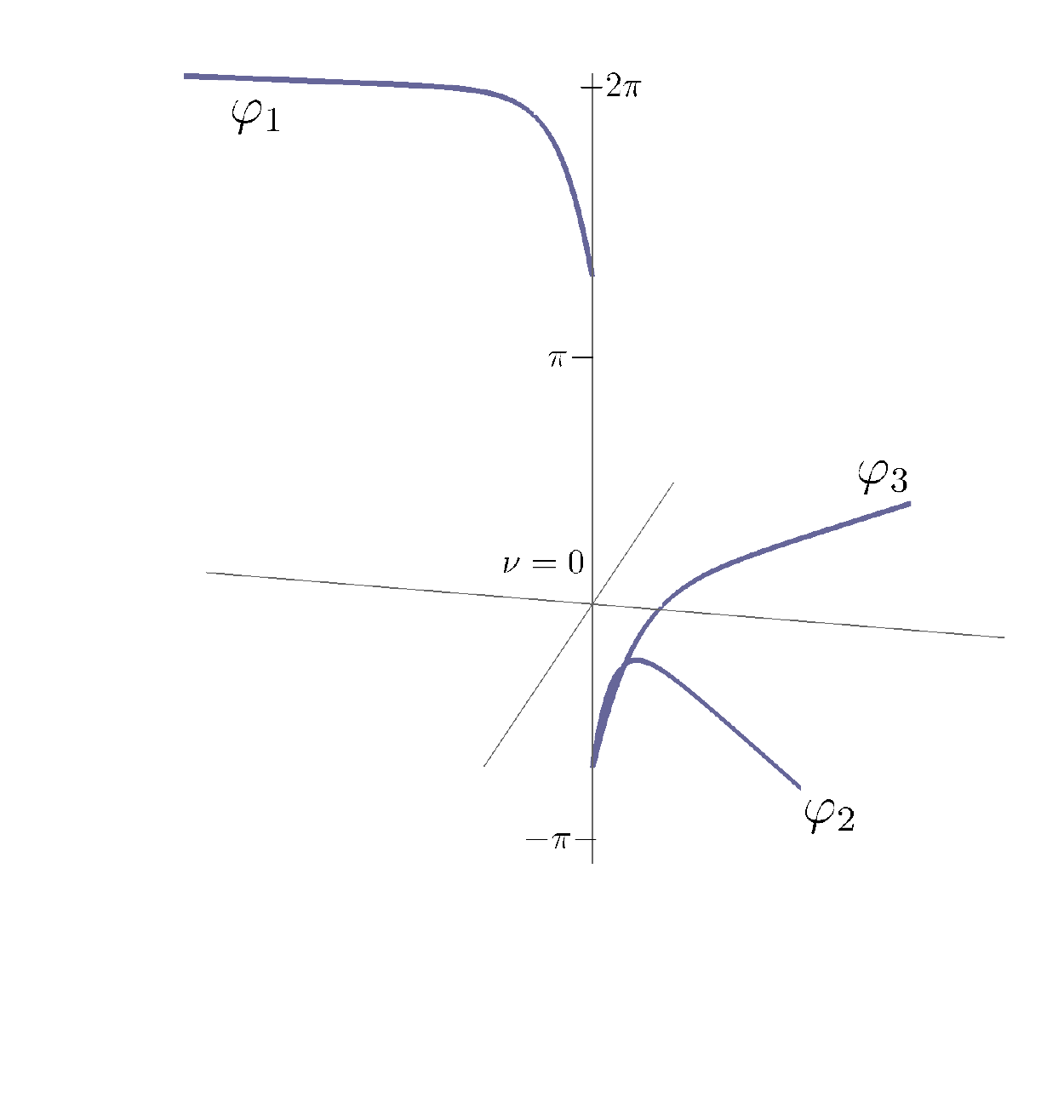}}
\subfigure[``Fluxons'', $\varphi_j'$.]{\label{figTCTailDer}\includegraphics[scale=.55, clip=true]{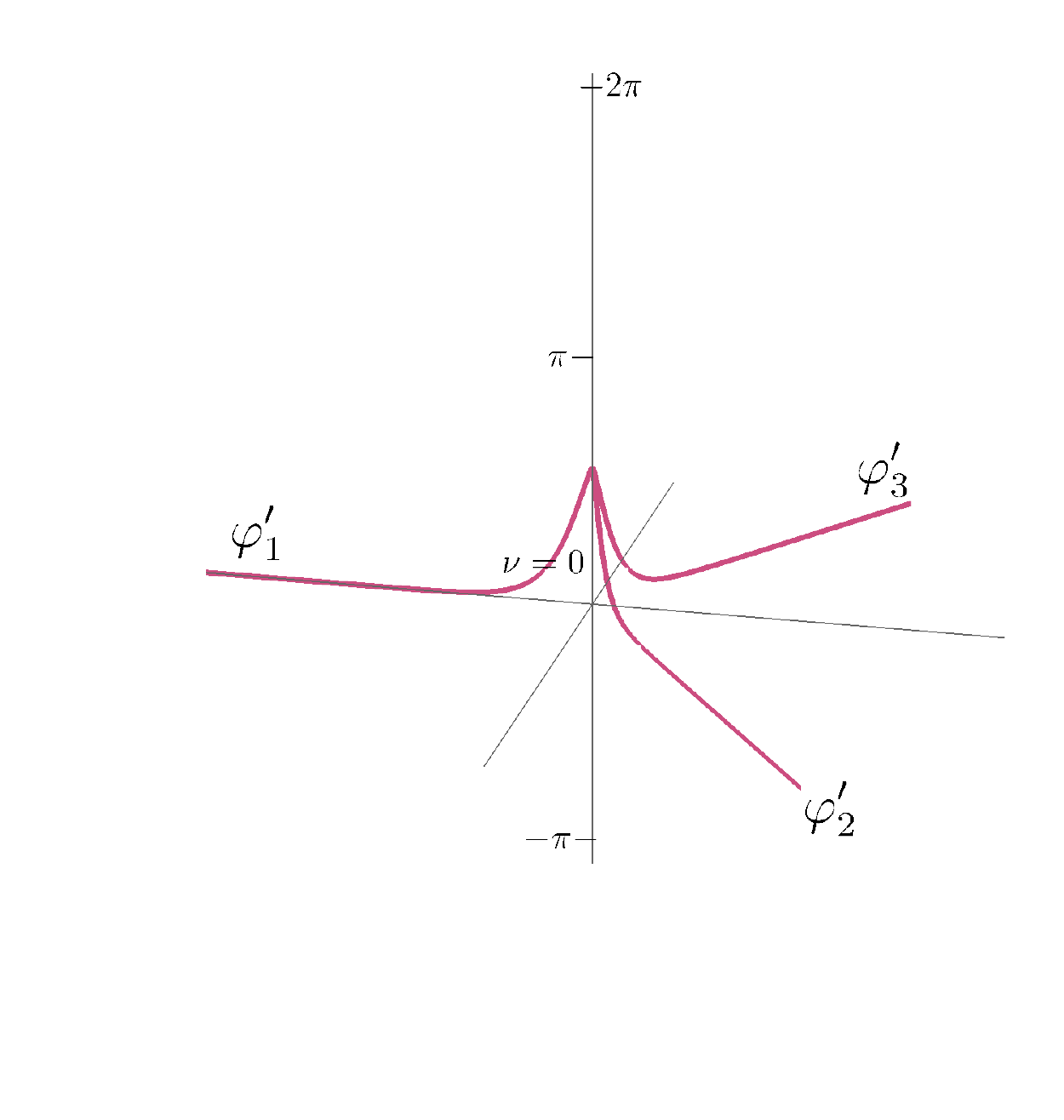}}
\end{center}
\caption{\small{Plots of the kink/anti-kink profiles \eqref{antikink} (panel (a), in blue) and their corresponding derivatives or ``fluxons'' (panel (b), in red) in the case where $c_j = 1$, $a_j = -0.5 < 0$, $j=1,2,3$, and $\lambda \in  (- \tfrac{\pi}{2}, \infty)$. Both plots are depicted in the same scale for comparison purposes (color online).}}\label{figTCTail}
\end{figure}

Here, our instability study of anti-kink/kink profiles will be in the case where the ``magnetic field'' or ``fluxon'' is continuous at the vertex of the tricrystal, namely, whenever $\varphi'_1(0)=\varphi'_2(0)=\varphi'_3($0) ($c_1=c_2=c_3$). It is to be observed that we have left open a possible instability study of other anti-kink/kink soliton configurations, for instance, profiles not satisfying the continuity property at zero for the components $\varphi_2, \varphi_3$ ($\frac{a_2}{c_2}\neq\frac{a_3}{c_3}$) and/or  the non-continuity of the magnetic field.  

The stability result for the stationary profiles, $\Phi_{\lambda,\delta'}=(\varphi_1,\varphi_2,\varphi_3, 0,0,0)$, with $\varphi_j=\varphi_{j, a_j(\lambda)}$ defined in \eqref{antikink}  is that established in Theorem \ref{0antimain}.

\subsubsection{Functional space for stability properties of the anti-kink/kink profile}
\label{akfunspa}
The natural framework space for studying  stability properties associated to the anti-kink/kink soliton profile $\Phi=(\varphi_j)_{j=1}^3$ described in the former subsection for the sine-Gordon model is $\mathcal X(\mathcal Y)= H^1_{\mathrm{loc}}(0,\infty)\bigoplus H^1(0, \infty)\bigoplus H^1(0, \infty)$. Thus we say that a flow $t\to (u(t), v(t))\in \mathcal X(\mathcal Y) \times L^2(\mathcal Y)$ is called  a \emph{perturbed solution} for the  anti-kink/kink profile $\Phi\in \mathcal X(\mathcal Y)$ if for $(P(t), Q(t))\equiv (u(t)-\Phi,v(t))$ we have that $ (P(t), Q(t))\in H^1(\mathcal Y)\times L^2(\mathcal Y)$ and  
$\bold z=(P,Q)^\top$ satisfies the following vectorial perturbed sine-Gordon model 
 \begin{equation}\label{akmodel}
 \begin{cases}
  \bold z_t=JE\bold z +F_1(\bold z)\\
 P(0)=u(0)-\Phi \in H^1(\mathcal Y),\\
 Q(0)=v(0)\in  L^2(\mathcal Y),
  \end{cases}
 \end{equation} 
  where for $P=(p_1, p_2, p_3)$ we have
   \begin{equation}
  F_1(\bold z)=\left(\begin{array}{cc}  0\\  0   \\  0   \\  \sin(\varphi_1)-\sin (p_1+\varphi_1)   \\  \sin(\varphi_2)-\sin (p_2+\varphi_2)   \\  \sin(\varphi_3)-\sin (p_3+\varphi_3)  \end{array}\right).
  \end{equation} 
Then, the stability analysis of the stationary anti-kink/kink $\Phi_{\lambda, \delta'}=(\varphi_1,\varphi_2,\varphi_3,0,0,0)$ by the sine-Gordon model on $\mathcal X(\mathcal Y) \times L^2(\mathcal Y)$ reduces to studying the stability properties of the trivial solution $(P,Q)=(0,0)$ for the  linearized  model associated to \eqref{akmodel} around $(P,Q)=(0,0)$. Thus, via Taylor's Theorem  we obtain the  linearized system in \eqref{stat4} but with the Schr\"odinger diagonal operator $\mathcal L$ in \eqref{stat5} now determined by the anti-kink/kink profile $\Phi=(\varphi_j)$. We denote this operator by $\mathcal L_{\lambda}$ in \eqref{deriva1} below, with the $\delta'$-interaction domain $D(\mathcal L_{\lambda})$ in \eqref{tricry2}. In this form, we can apply \emph{ipsis litteris} the  semi-group theory results in subsection 3.1.1  to the operator $JE$ and to the local well-posedness problem in $H^1(\mathcal Y)\times L^2(\mathcal Y)$ for the vectorial perturbed sine-Gordon model \eqref{akmodel}. Lastly, we note that the anti-kink/kink profile $\Phi\in \mathcal X(\mathcal Y)$, but $\Phi'\in H^2(\mathcal Y)$.

\subsubsection{The spectral study in the anti-kink/kink case}

In this subsection we provide the necessary spectral information for the family of self-adjoint operators $(\mathcal{L}_\lambda, D(\mathcal{L}_\lambda))$, where
\begin{equation}\label{deriva1}
\mathcal{L}_\lambda =\Big (\Big(-c_j^2\frac{d^2}{dx^2}+\cos(\varphi_j)
\Big)\delta_{j,k} \Big ),\quad \l1\leqq j, k\leqq 3,
\end{equation}
associated to the anti-kink/kink solutions $(\varphi_1, \varphi_2, \varphi_3)$ determined in the previous subsection. Here $D(\mathcal{L}_\lambda)$ is the $\delta'$-interaction domain defined  in \eqref{2trav23}.

By completeness and convenience of the reader, we prove the following  results associated to the kernel of the operators $\mathcal{L}_\lambda$ for all values of  $c_1, c_2, c_3$ and the admissible $\lambda$-values. We note  that these results are expected, due to the break in the translation symmetry of solutions for the sine-Gordon model on tricrystal configurations. 

\begin{proposition}\label{antimain}
Let $c_1, c_2, c_3>0$ and $\lambda\in  (-\infty,+\infty)$. Then, $\ker( \mathcal{L}_\lambda )=\{\mathbf{0}\}$ for all $\lambda\neq -\frac{\pi}{2}(c_2+c_3-c_1)$. For $\lambda=-\frac{\pi}{2}(c_2+c_3-c_1)$ we have $\dim (\ker( \mathcal{L}_\lambda) )=2$. Moreover, for all $\lambda$ we obtain $\sigma_{\mathrm{ess}}(\mathcal{L}_\lambda)=[1,+\infty)$.
\end{proposition}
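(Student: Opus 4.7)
The plan is to reduce the kernel analysis to a one-dimensional nonlinear equation via Sturm--Liouville theory on the half-line, and then to exploit the monotonicity of an explicit auxiliary function. Differentiating the profile equation $-c_j^2\varphi_j''+\sin\varphi_j=0$ gives $L_j\varphi_j'=0$, where $L_j=-c_j^2 d^2/dx^2+\cos\varphi_j$; since $\varphi_j'(x)=(2/c_j)\sech((x-a_j)/c_j)$ decays exponentially at $+\infty$ while the companion solution (obtained by reduction of order) grows exponentially, the classical half-line Sturm--Liouville theory of \cite{BeSh} implies that every $L^2$-solution of $L_j u_j=0$ on $(0,\infty)$ is a scalar multiple of $\varphi_j'$. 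Hence any $\mathbf{u}\in\ker(\mathcal{L}_\lambda)$ has the form $u_j=\alpha_j\varphi_j'$, $j=1,2,3$, for some constants $\alpha_j\in\R$.

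I then substitute this ansatz into the $\delta'$-vertex conditions \eqref{2trav23}. Writing $s=\sech(a_1/c_1)$, $t=\tanh(a_1/c_1)$, $\mu=c_1 u_1'(0+)$ and $\theta=c_1+c_2+c_3$, and using $\varphi_j'(0)=(2/c_j)s$ and $\varphi_j''(0)=(2/c_j^2)st$ together with the standing assumption $a_1/c_1=a_2/c_2=a_3/c_3$ from subsection 3.2.1, the flux-continuity condition collapses in the case $t\neq 0$ (i.e.\ $\lambda\neq -\tfrac{\pi}{2}(c_2+c_3-c_1)$) to $\alpha_j=\mu c_j/(2st)$, while the Kirchhoff-type condition becomes $\mu(\theta-\lambda t)=0$. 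The main obstacle is to verify that $\theta\neq\lambda t$ along the entire family of profiles. Setting $y=e^{-a_1/c_1}$ so that $t=(1-y^2)/(1+y^2)$ and $\lambda=F(y)$ from \eqref{0anti3}, the equation $\theta=\lambda t$ reduces to
\begin{equation*}
\psi(y)\;:=\;\arctan(y)-\frac{y}{1-y^2}\;=\;\frac{(c_2+c_3)\pi}{2\theta}.
\end{equation*}
A direct differentiation yields $\psi'(y)=-4y^2/[(1+y^2)(1-y^2)^2]<0$ on $(0,1)\cup(1,\infty)$, with boundary values $\psi(0^+)=0$, $\psi(1^-)=-\infty$, $\psi(1^+)=+\infty$ and $\psi(+\infty)=\pi/2$, so the range of $\psi$ equals $(-\infty,0)\cup(\pi/2,+\infty)$. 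Since $c_1>0$ the target $(c_2+c_3)\pi/(2\theta)$ lies in $(0,\pi/2)$ and is therefore never attained; consequently $\mu=0$ and all $\alpha_j$ vanish, so $\ker(\mathcal{L}_\lambda)=\{\mathbf{0}\}$.

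For the critical value $\lambda=-\tfrac{\pi}{2}(c_2+c_3-c_1)$ one has $a_j=0$, hence $t=0$ and $\varphi_j''(0)=0$, so the flux condition is automatically satisfied and the Kirchhoff-type condition reduces to the single relation $\sum_j\alpha_j=0$ on $\R^3$, producing a two-dimensional kernel spanned for example by $(\varphi_1',-\varphi_2',0)$ and $(0,\varphi_2',-\varphi_3')$. Finally, the essential spectrum is identified by Weyl's theorem: as $x\to+\infty$ one has $\varphi_1(x)\to 2\pi$ and $\varphi_j(x)\to 0$ for $j=2,3$, so each potential $\cos\varphi_j$ converges to $1$ and $\cos\varphi_j-1$ is a relatively compact perturbation of $-c_j^2 d^2/dx^2+1$; combining the three edges yields $\sigma_{\mathrm{ess}}(\mathcal{L}_\lambda)=[1,+\infty)$. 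The one genuinely nontrivial step is the monotonicity-and-range analysis of $\psi$, which I expect to be the main (if purely computational) obstacle.
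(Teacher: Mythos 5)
Your proof is correct, and its decisive step is genuinely different from the paper's. Both arguments share the same skeleton: Sturm--Liouville reduction on each half-line to get $u_j=\alpha_j\varphi_j'$, substitution into the $\delta'$-vertex conditions \eqref{2trav23}, and a scalar non-degeneracy condition to be ruled out. The paper, however, closes the argument by deriving the single relation \eqref{reker}, namely $\lambda=(c_1+c_2+c_3)\cosh^2(a_1/c_1)/\sinh(a_1/c_1)$, and then playing the resulting inequality $|\cosh^2 x/\sinh x|\geqq 2$ against the interval in which $\lambda$ must lie according to the sign of $a_1$ (cases a) and b) of its proof). You instead substitute the \emph{exact} parametrization $\lambda=F(y)$ from \eqref{0anti3} into the compatibility condition $\theta=\lambda t$, reducing everything to the transcendental equation $\psi(y)=\arctan(y)-y/(1-y^2)=(c_2+c_3)\pi/(2\theta)$, and you show by the monotonicity and range computation for $\psi$ that the right-hand side, which lies in $(0,\pi/2)$ because $c_1>0$, is never attained. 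This buys you something concrete: your argument is uniform in $\lambda$ and in the $c_j$ and does not depend on comparing $\lambda$ with the endpoints $\pm\frac{\pi}{2}(c_2+c_3-c_1)$ and $\pm 2(c_1+c_2+c_3)$. Indeed, a direct computation from $\varphi_1'(0)=\tfrac{2}{c_1}\sech(a_1/c_1)$ and $\varphi_1''(0)=\tfrac{2}{c_1^2}\sech(a_1/c_1)\tanh(a_1/c_1)$ gives $\varphi_1'(0)/\varphi_1''(0)=c_1\coth(a_1/c_1)$, so the compatibility relation is really $\lambda=\theta\coth(a_1/c_1)$ rather than \eqref{reker} as printed (a single power of $\cosh$, not two); with only the bound $|\coth|\geqq 1$ the paper's interval comparison in case a) would not close for all choices of $c_j$ (e.g.\ $c_1$ small relative to $c_2+c_3$), whereas your exact reduction to $\psi(y)$ settles every case. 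The critical value $\lambda=-\frac{\pi}{2}(c_2+c_3-c_1)$ (i.e.\ $t=0$, two-dimensional kernel spanned by $(\varphi_1',-\varphi_2',0)$ and $(0,\varphi_2',-\varphi_3')$) and the Weyl-theorem identification of $\sigma_{\mathrm{ess}}(\mathcal{L}_\lambda)=[1,+\infty)$ are handled exactly as in the paper.
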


\begin{proof} Let $\bold u=(u_1, u_2, u_3)\in D(\mathcal{L}_\lambda)$ and $\mathcal{L}_\lambda \bold{u}=\bold{0}$. Then, since $(\varphi'_1, \varphi'_3, \varphi'_2)\in H^2(\mathcal Y)$, it follows from Sturm-Liouville theory on half-lines that
\begin{equation}\label{anti2spec}
 u_j(x) = \alpha_j\phi'_j(x), \;\;x>0,\;\; j=1,2,3,
\end{equation}
for some $\alpha_j$, $j = 1,2,3$, real constant. In what follows, we assume $\lambda\neq -\frac{\pi}{2}(c_2+c_3-c_1)$. Thus, from \eqref{tricry2}  and by supposing $ \alpha_1\neq 0$ we have
\begin{equation} 
\label{reker0}
\frac{\alpha_2}{\alpha_1}=\frac{c_1}{c_2} \frac{\varphi''_1(0)}{ \varphi''_2(0)}=\frac{c_2}{c_1},\;\;\;\; \frac{\alpha_3}{\alpha_1}=\frac{c_1}{c_3} \frac{\varphi''_1(0)}{ \varphi''_3(0)}=\frac{c_3}{c_1}
\end{equation}
and
\begin{equation} 
\label{reker}
\lambda=\frac{1}{\alpha_1}\Big(\sum_{j=1}^3\alpha_j \Big) \frac{\varphi'_1(0)}{ \varphi''_1(0)}=(c_1+c_2+c_3) \frac{\cosh^2(a_1/c_1)}{\sinh(a_1/c_1)}.
\end{equation}

Now, we consider the following cases:
\begin{enumerate}

\item[a)] Suppose $a_1<0$ ($\lambda\in (-\frac{\pi}{2}(c_2+c_3-c_1), +\infty)$): then, since
\begin{equation}\label{inequa}
\frac{\cosh^2(a_1/c_1)}{\sinh(a_1/c_1)}\leqq -2,\;\;\text{for all}\;\; a_1<0,
\end{equation}
we have, from \eqref{reker}, $-\frac{\pi}{2}(c_2+c_3-c_1)<-2(c_1+c_2+c_3)$, which is a contradiction.  So, we need to have $0=\alpha_1=\alpha_2=\alpha_3$ and therefore $ \bold{u}=\bold{0}$.

\item[b)] Suppose $a_1>0$ ($\lambda\in (-\infty, -\frac{\pi}{2}(c_2+c_3-c_1)$): then, from  \eqref{reker} we have 
$\lambda> 2(c_1+c_2+c_3)$.  Thus for the case $c_2+c_3-c_1\geqq 0$ we obtain immediately a contradiction. Now, for  $c_2+c_3-c_1<0$ is sufficient to study the case $0<\lambda<  -\frac{\pi}{2}(c_2+c_3-c_1)$. Indeed, since  $2(c_1+c_2+c_3)> -\frac{\pi}{2}(c_2+c_3-c_1)$ we have again a contradiction. So, we need to have $0=\alpha_1=\alpha_2=\alpha_3$ and therefore $ \bold{u}=\bold{0}$.
\end{enumerate}

Now, suppose that $\lambda=-\frac{\pi}{2}(c_2+c_3-c_1)$.  In this case the Kirchhoff's condition for $\bold u$, $\varphi'_1(0)\neq 0$ and $\varphi''_1(0)=0$ we get the relation $\alpha_1+\alpha_2+\alpha_3=0$. Therefore
$$
(u_1, u_2, u_3)=\alpha_2(-\varphi'_1, \varphi'_2,0)+ \alpha_3(-\varphi'_1, 0,  \varphi'_3).
$$
Since $(-\varphi'_1, \varphi'_2,0), (-\varphi'_1, 0,  \varphi'_3)\in D(\mathcal L_\lambda)$ we obtain that $\dim(\ker(\mathcal L_\lambda))=2$.

The statement $\sigma_{\mathrm{ess}}(\mathcal{L}_\lambda)=[1,+\infty)$ is an immediate consequence of Weyl's Theorem because of $\lim_{x\to +\infty} \cos(\varphi_1(x))=1=\lim_{x\to +\infty} \cos(\varphi_j(x))$. This finishes the proof.
\end{proof}

\begin{proposition}
\label{anti5main}
Let $c_1=c_2=c_3$ and $\lambda\in [-\frac{\pi}{2}c_1, 0)$. Then $n( \mathcal{L}_\lambda )=1$.
\end{proposition}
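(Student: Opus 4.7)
I would follow the same extension-theory framework used in Proposition~\ref{2crystal}. The first step is to view $\mathcal L_\lambda$ as a self-adjoint extension of the closed symmetric operator $\widetilde T$ given by the same differential expression as $\mathcal L_\lambda$ but on the minimal domain in \eqref{tricr2}; by Proposition~\ref{T} this operator has deficiency indices $(1,1)$, so the one-parameter family $\{\mathcal L_\lambda\}_{\lambda\in\R}$ exhausts all its self-adjoint realizations, and the semi-boundedness principle of Proposition~\ref{semibounded} of the Appendix will give $n(\mathcal L_\lambda)\leqq 1$ as soon as I verify $\widetilde T\geqq 0$.

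The positivity of $\widetilde T$ is proved, as in Proposition~\ref{2crystal}, via the Darboux-type factorization
\[
L_j \psi \;=\; -\frac{1}{\varphi_j'}\frac{d}{dx}\Bigl[c_j^2 (\varphi_j')^2 \frac{d}{dx}\Bigl(\frac{\psi}{\varphi_j'}\Bigr)\Bigr],
\]
which is legitimate because the anti-kink/kink profiles satisfy $\varphi_j'(x)=\tfrac{2}{c_j}\sech((x-a_j)/c_j)>0$ everywhere on $[0,\infty)$. Integrating by parts for $\Lambda=(\psi_j)\in D(\widetilde T)$ splits the quadratic form as $\langle\widetilde T\Lambda,\Lambda\rangle=A+P$, with $A\geqq 0$ coming from the integral contribution and $P=-\sum_j c_j^2\psi_j(0)^2\,\varphi_j''(0)/\varphi_j'(0)$ from the boundary. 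Under the hypothesis $c_1=c_2=c_3$ and $\lambda\in[-\tfrac{\pi}{2}c_1,0)$, the profile analysis from the previous subsection forces $a_j\leqq 0$, so $\varphi_1(0)\in[\pi,2\pi)$ and $\varphi_j(0)\in[-\pi,0)$ for $j=2,3$; hence $\sin\varphi_j(0)\leqq 0$ for every $j$, which, together with the equation $c_j^2\varphi_j''=\sin\varphi_j$ and $\varphi_j'(0)>0$, makes each summand of $P$ nonnegative. This gives $\widetilde T\geqq 0$ and therefore $n(\mathcal L_\lambda)\leqq 1$.

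For the matching lower bound $n(\mathcal L_\lambda)\geqq 1$, I would exploit the symmetry $\varphi_2=\varphi_3$ guaranteed by $c_1=c_2=c_3$ and $a_2/c_2=a_3/c_3$ to restrict $\mathcal L_\lambda$ to the invariant subspace $\mathcal C_2=\{(u_1,u_2,u_3)\in L^2(\mathcal Y):u_2=u_3\}$ and then build a test function of the form
\[
\Psi=(\varphi_1-2\pi,\varphi_2,\varphi_3)+\beta(\eta,\eta,\eta)\in D(\mathcal L_\lambda)\cap\mathcal C_2,
\]
where $\eta\in C_0^\infty([0,\infty))$ is a smooth cutoff satisfying $\eta(0)=1$, $\eta'(0)=0$, and $\beta\in\R$ is tuned so that the Kirchhoff relation $\sum_j c_j\Psi_j(0)=\lambda c_1\Psi_1'(0)$ holds (the fluxes at the vertex coincide automatically by the symmetry). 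Expanding $\langle\mathcal L_\lambda\Psi,\Psi\rangle$ using $c_j^2\varphi_j''=\sin\varphi_j$ and the elementary bound $x\cos x\leqq\sin x$ on the relevant intervals of $\varphi_j$ should produce a strictly negative value, and the min-max principle then yields $n(\mathcal L_\lambda)\geqq 1$. The main obstacle is making this computation work uniformly over the whole interval $[-\tfrac{\pi}{2}c_1,0)$, since the boundary correction $\beta(\eta,\eta,\eta)$ introduces cross-terms absent from the simpler kink-only argument of Proposition~\ref{2crystal}; if the direct bound degenerates near the endpoint $\lambda_0=-\tfrac{\pi}{2}c_1$, the natural fallback is analytic perturbation in the spirit of the forthcoming Propositions~\ref{antimain4}-\ref{Appanti}: at $\lambda_0$ we have $\dim\ker\mathcal L_{\lambda_0}=2$ by Proposition~\ref{antimain}, and Kato-Rellich applied to the real-analytic family $\lambda\mapsto\mathcal L_\lambda|_{\mathcal C_2}$, together with a sign computation for the derivative of the bifurcating eigenvalues at $\lambda_0$, shows that exactly one of them crosses into the negative half-line as $\lambda$ enters $(-\tfrac{\pi}{2}c_1,0)$, while the other moves up or stays at zero, so that $n(\mathcal L_\lambda)=1$ throughout the interval.
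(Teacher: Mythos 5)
Your upper-bound argument ($n(\mathcal L_\lambda)\leqq 1$) is exactly the paper's: the same Darboux factorization as in \eqref{spec7}, the same splitting $\langle\widetilde T\Lambda,\Lambda\rangle=A+P$ over the minimal domain \eqref{tricr2}, and the same sign check $\varphi_j'(0)>0$, $\varphi_j''(0)\leqq 0$ (valid because $a_1\leqq 0$ on this $\lambda$-range), combined with Proposition \ref{semibounded}. That half is fine.

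The lower bound is where there is a genuine gap. The paper does not use a test function in the operator domain at all: it passes to the quadratic form \eqref{antiqua} of $(\mathcal L_\lambda,D(\mathcal L_\lambda))$, whose form domain is all of $H^1(\mathcal Y)$ with the vertex coupling absorbed into the boundary term $\frac{1}{\lambda}\big(\sum_j c_j\psi_j(0)\big)^2$, and evaluates it at $\Lambda_1=(\varphi_1',\varphi_2',\varphi_3')$. Since $\Phi'$ is annihilated by the differential expression, the integral terms cancel after one integration by parts and \eqref{2antiqua} reduces everything to the explicit scalar inequality $3c_1\cosh^2(a_1)/\sinh(a_1)<\lambda$, which follows from \eqref{inequa}; at the endpoint $\lambda=-\frac{\pi}{2}c_1$ negativity is immediate because $\varphi_j''(0)=0$. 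Your plan instead insists on a test function $\Psi=(\varphi_1-2\pi,\varphi_2,\varphi_3)+\beta(\eta,\eta,\eta)$ lying in $D(\mathcal L_\lambda)$, which forces the correction $\beta=\frac{\lambda}{3}\varphi_1'(0)-\varphi_2(0)$. This $\beta$ is not small: it equals $\frac{2\pi}{3}$ at $\lambda=-\frac{\pi}{2}c_1$ and tends to $\frac{2\pi}{3}$ again as $\lambda\to 0^-$, so the cross terms and the $\beta^2$ contribution are of the same order as the favorable term $\sum_j\int(\varphi_j\cos\varphi_j-\sin\varphi_j)\varphi_j\,dx$, and you give no estimate showing the total is negative; it is not clear that it is. The fallback is also shaky: a Kato--Rellich splitting of the two-dimensional kernel at $\lambda_0=-\frac{\pi}{2}c_1$ requires a known Morse index at the base point (precisely what you fear your direct computation fails to deliver there) and a first-order computation of the two bifurcating branches that you do not carry out; moreover the paper's perturbation argument (Proposition \ref{antimain4}) runs in the opposite direction, using the present proposition as its anchor for $\lambda\geqq 0$, so invoking that machinery here risks circularity. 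The missing idea is simply to test the quadratic form on $H^1(\mathcal Y)$ rather than on $D(\mathcal L_\lambda)$, so that $\Phi'$ itself is an admissible trial function and no cutoff is needed.
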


\begin{proof}  Similarly as in the proof of Proposition \ref{2crystal} (see formula \eqref{7spec}), we obtain via the extension theory that $n( \mathcal{L}_\lambda )\leqq 1$ for all $\lambda\in [-\frac{\pi}{2}c_1, +\infty)$  because of $\varphi_j''(0)\leqq 0$ for $j=1,2,3$.

Now we show that $n(\mathcal{L}_\lambda)\geqq 1$ for $\lambda\in [-\frac{\pi}{2}c_1, 0)$. Indeed, we consider the following quadratic form $\mathcal Q_\lambda$ associated to $(\mathcal{L}_\lambda, D(\mathcal{L}_\lambda))$ for  $\Lambda =(\psi_i)\in H^1(\mathcal Y)$,
\begin{equation}
\label{antiqua}
\mathcal Q(\Lambda)=\frac{1}{\lambda} \Big(\sum_{j=1}^3c_j \psi_j(0)\Big)^2 +\sum_{j=1}^3 \int_0^{\infty}c^2_j(\psi'_j)^2+\cos(\varphi_j) \psi_j^2 dx.
\end{equation}
Next, for $\Lambda_1=(\varphi'_1, \varphi'_2, \varphi'_3)\in H^1(\mathcal Y)$ we obtain  from the equalities  $-\varphi'''_j + \cos(\varphi_j)\varphi'_j=0$, $c_1\varphi'_1(0)=c_2\varphi'_2(0)=c_3\varphi'_3(0)$, and from integration by parts, the relation
\begin{equation}\label{2antiqua}
\mathcal Q_\lambda(\Lambda_1)=\frac{9c_1^2}{\lambda} [\varphi'_1(0)]^2 -c_1\varphi'_1(0) \sum_{j=1}^3 c_j \varphi''_j(0).
\end{equation}
Thus, for $\lambda=-\frac{\pi}{2}c_1$ we have $\varphi''_j(0)=0$ and therefore $\mathcal Q_\lambda(\Lambda_1)<0$. Thus $n(\mathcal{L}_{-\frac{\pi}{2}c_1})=1$. Now, for $\lambda\in (-\frac{\pi}{2}c_1, 0)$ we have $\varphi''_1(0)<0$ ($a_1<0$) and so since $\varphi''_1(0)=\varphi''_2(0)=\varphi''_3(0)$ we get $\mathcal Q_\lambda(\Lambda_1)<0$ if and only if 
\begin{equation}
3\frac{\varphi'_1(0)}{ \varphi''_1(0)}=3c_1\frac{\cosh^2(a_1)}{\sinh(a_1)}<\lambda, \;\;\text{for}\;\; a_1<0,
\end{equation}
which is true by \eqref{inequa}. Therefore, $n(\mathcal{L}_{\lambda})=1$ for $\lambda\in (-\frac{\pi}{2}c_1, 0)$.
\end{proof}

\begin{remark}\label{antifora}
For the case $\lambda\in [0, +\infty)$ in Proposition \ref{anti5main}, it was not possible to show (in an easy way) that the quadratic form $\mathcal Q_\lambda$ in \eqref{antiqua} has a negative direction. But we will see in the following, via analytic perturbation approach, that we still have $n( \mathcal{L}_\lambda )=1$.
\end{remark}

\begin{proposition}
\label{antimain4}
Let  $c_1=c_2=c_3$ and  $\lambda\in [0, +\infty)$. Then $n( \mathcal{L}_\lambda )=1$.
\end{proposition}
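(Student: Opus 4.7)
The upper bound $n(\mathcal{L}_\lambda) \leq 1$ was already established for the entire range $\lambda \in [-\tfrac{\pi}{2}c_1, +\infty)$ in Proposition \ref{anti5main}: the extension-theoretic argument used there only exploits the sign facts $\varphi_j''(0) \leq 0$ and $\varphi_j'(0) > 0$, which correspond to $a_1 \leq 0$ and hold throughout this range. Consequently the only task is to prove the lower bound $n(\mathcal{L}_\lambda) \geq 1$ for every $\lambda \in [0,+\infty)$.

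The plan is to continue the negative eigenvalue produced at $\lambda_\ast = -\tfrac{\pi}{2}c_1$ analytically in $\lambda$. By Proposition \ref{anti5main}, at $\lambda_\ast$ the operator $\mathcal{L}_{\lambda_\ast}$ has a simple negative eigenvalue $\mu_\ast < 0$, isolated from the essential spectrum $[1,+\infty)$. I would verify, via a Krein-type resolvent representation for the one-parameter family of $\delta'$-extensions (cf.\ Proposition \ref{T}; this is the content of the appendix result Proposition \ref{Appanti} invoked in the proof of Theorem \ref{trimain}), that $\{\mathcal{L}_\lambda\}_{\lambda \in \mathbb{R}}$ is a real-analytic family of self-adjoint operators in the sense of Kato. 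In this picture, the apparent $1/\lambda$ singularity of the quadratic form \eqref{antiqua} does not correspond to any singularity of the operator itself: the operator is well defined and analytic across $\lambda = 0$, and the $\delta'$-boundary perturbation enters the resolvent as a finite-rank correction whose coefficients are jointly real-analytic in $(\lambda,z)$ off the spectrum.

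The Kato-Rellich perturbation theorem then yields a real-analytic branch $\lambda \mapsto \mu(\lambda)$ of simple eigenvalues of $\mathcal{L}_\lambda$ near $\lambda_\ast$, with $\mu(\lambda_\ast) = \mu_\ast$. I would extend this branch to all $\lambda \geq \lambda_\ast$ by continuation. The branch can terminate only if $\mu(\lambda) \to 0$ (producing a nontrivial element of $\ker \mathcal{L}_\lambda$) or $\mu(\lambda) \to 1$ (hitting the essential spectrum). The first scenario is ruled out by Proposition \ref{antimain}, which gives $\ker(\mathcal{L}_\lambda) = \{0\}$ for every $\lambda \neq \lambda_\ast$; the second scenario cannot occur because $\mu(\lambda)$ is initially negative and, by continuity together with the preceding observation, cannot cross zero. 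Hence $\mu(\lambda) < 0$ for all $\lambda \geq \lambda_\ast$, which yields $n(\mathcal{L}_\lambda) \geq 1$ on $[0,+\infty)$ and, combined with the upper bound, finishes the proof.

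The main obstacle is the rigorous verification that $\{\mathcal{L}_\lambda\}$ is a real-analytic family through $\lambda = 0$, where the boundary interpretation of the parameter degenerates; this is handled by treating $\lambda$ as an additive parameter at the level of resolvents (Krein's formula) rather than at the level of the form \eqref{antiqua}, so no genuine singularity appears.
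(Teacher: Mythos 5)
Your proposal is correct and follows essentially the same route as the paper: the upper bound $n(\mathcal{L}_\lambda)\leqq 1$ comes from the extension-theoretic argument of Proposition \ref{anti5main}, and the lower bound is propagated by perturbation theory from a parameter value where $n=1$ is already known, using Proposition \ref{antimain} (trivial kernel for $\lambda\neq -\tfrac{\pi}{2}c_1$) to prevent the negative eigenvalue from crossing zero --- the paper anchors the continuation at small negative $\lambda$ and uses generalized (gap) convergence plus Riesz projections rather than Kato--Rellich analyticity from $\lambda_*=-\tfrac{\pi}{2}c_1$, but the mechanism is identical. One minor slip: Proposition \ref{Appanti} is not an appendix Krein-formula result but the statement about $\mathcal{L}_\lambda$ restricted to the symmetric subspace $\mathcal{C}$, so that citation is off, though it does not affect the substance of your argument.
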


\begin{proof} We will use analytic perturbation theory. Initially, from subsection 3.2.1 we have that  $\lambda\in (-\infty ,+\infty)\to a_1(\lambda)$ represents a  real-analytic   mapping function and so  from the relations $\varphi_{1, a_1(\lambda)}-\varphi_{1, a_1(0)}\in L^2(0, +\infty)$ for every $\lambda$ and $\|\varphi_{1, a_1(\lambda)}-\varphi_{1, a_1(0)}\|_{H^1(0, +\infty)} \to 0$ as $\lambda\to 0$, we obtain for $\Phi_{a_1(\lambda)}=(\varphi_{1, a_1(\lambda)}, \varphi_{3, a_1(\lambda)}, \varphi_{2, a_1(\lambda)})$ (where we have used that $a_1(\lambda)=a_2(\lambda)=a_3(\lambda)$) the convergence
$$
\|\Phi_{a_1(\lambda)}- \Phi_{a_1(0)}\|_{H^1(\mathcal Y)}\to 0\;\;\text{as}\;\;\lambda\to 0.
$$ 

Thus, we obtain that  $\mathcal L_{\lambda}$ converges to $\mathcal L_{0}$ as $\lambda\to 0$ in the generalized sense. Indeed, denoting $W_\lambda=\Big(\cos (\varphi_{j, a_1(\lambda)})\delta_{j,k}\Big)$
we obtain 
\begin{equation*}
 \begin{split}
 \widehat{\delta}(\mathcal L_\lambda, \mathcal L_{0} )&=\widehat{\delta}(\mathcal L_0 + (W_\lambda-W_{0}), \mathcal L_{0})\leqq \|W_\lambda-W_{0}\|_{L^2 (\mathcal Y)}\to 0,\qquad\text{as}\;\;\lambda\to 0,
\end{split}
\end{equation*}
where $\widehat{\delta}$ is  the gap metric  (see \cite[Chapter IV]{kato}). 

Now, we denote by $N=n(\mathcal L_{0})$ the Morse-index for $\mathcal L_{0}$ (from the proof of Proposition \ref{anti5main}, $N\leqq 1$). 
Thus, from Proposition \ref{antimain} we can separate the spectrum $\sigma(\mathcal L_{0})$ of $\mathcal L_{0}$   into two parts, $\sigma_0=\{\gamma: \gamma<0\}\cap \sigma (\mathcal L_{0})$ and $\sigma_1$ by a closed curve  $\Gamma$ belongs to the resolvent set of $\mathcal L_{0}$ with $0\in \Gamma$ and   such that $\sigma_0$ belongs to the inner domain of $\Gamma$ and $\sigma_1$ to the outer domain of $\Gamma$. Moreover, $\sigma_1\subset [\theta_{0}, +\infty)$ with $\theta_{0}=\inf\{\theta: \theta\in \sigma(\mathcal L_0),\; \theta>0\}>0$ (we recall that $\sigma_{\mathrm{ess}}(\mathcal L_0)=[1,+\infty)$). Then, by  \cite[Theorem 3.16, Chapter IV]{kato}, we have  $\Gamma\subset \rho(\mathcal L_{\lambda})$ for $\lambda\in [-\delta_1, \delta_1]$ and $\delta_1>0$ small enough. Moreover, $\sigma(\mathcal L_{\lambda})$ is likewise separated by $\Gamma$ into two parts so that the part of $\sigma (\mathcal L_{\lambda})$ inside $\Gamma$ will consist of a negative eigenvalue with exactly total (algebraic) multiplicity equal to $N$. Therefore, by Proposition \ref{anti5main} we need to have $n(\mathcal L_{\lambda})=N=1$ for $\lambda\in [-\delta_1, \delta_1]$.

Next, using a classical continuation argument based on the Riesz-projection, we can see that $n( \mathcal{L}_\lambda )=1$ for all $\lambda\in  [0, +\infty)$ (see \cite{AnPl-delta}).  This finishes the proof.
\end{proof}

The following result gives a precise value for the Morse-index of the operator $ \mathcal{L}_{\lambda} $, with $\lambda\in (-\infty, -\frac{\pi}{2}c_1)$,  when we consider the  domain $\mathcal C\cap D (\mathcal{L}_{\lambda})$, $\mathcal C=\{(u_j)_{j=1}^3\in L^2(\mathcal Y): u_1=u_2=u_3\}$. This strategy  will allow the use of the linear instability framework established in section 2 above.  

\begin{proposition}
\label{Appanti}
Let $c_1=c_2=c_3$ and $\lambda\in (-\infty, -\frac{\pi}{2}c_1]$.  Consider  $\mathcal B=\mathcal C \cap D (\mathcal{L}_{\lambda})$.  Then $\mathcal{L}_{\lambda}: \mathcal B\to \mathcal C$ is well defined. Moreover,  $\ker( \mathcal{L}_\lambda |_{\mathcal B})=\{0\}$ and $n( \mathcal{L}_{\lambda} |_{\mathcal B} )=1$.
\end{proposition}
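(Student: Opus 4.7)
My plan is to reduce the spectral analysis on $\mathcal{B}$ to a scalar half-line problem, then combine a Sturm-type factorization with the extension theory of Proposition \ref{semibounded} to control the Morse index from above, while an explicit test vector delivers the matching lower bound.

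Because $c_1=c_2=c_3$ forces $a_1=a_2=a_3$ in the profile's construction, we have $\varphi_2=\varphi_1-2\pi=\varphi_3$ and therefore $\cos(\varphi_1)=\cos(\varphi_2)=\cos(\varphi_3)$ on $(0,\infty)$. For any $\mathbf{u}=(u,u,u)\in\mathcal{B}$, the flux-continuity part of the vertex condition is automatic and the Kirchhoff-type part collapses to the scalar Robin condition $3u(0+)=\lambda u'(0+)$; since the three potentials coincide, $\mathcal{L}_\lambda\mathbf{u}$ has three identical components, proving $\mathcal{L}_\lambda:\mathcal{B}\to\mathcal{C}$ is well defined. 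Under the identification $\mathbf{u}\leftrightarrow u$, $\mathcal{L}_\lambda|_{\mathcal{B}}$ is unitarily equivalent to a positive multiple of the scalar Schr\"odinger operator $L_\lambda u=-c_1^2 u''+\cos(\varphi_1)u$ on $D(L_\lambda)=\{u\in H^2(0,\infty):3u(0+)=\lambda u'(0+)\}$, so the kernel and Morse-index questions reduce to the corresponding scalar problem.

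For the kernel, Sturm--Liouville theory on the half-line forces any $L^2$-solution of $L_\lambda u=0$ to be a multiple of $\varphi_1'$, and the Robin condition, using $\varphi_1'(0)=(2/c_1)\sech(a_1/c_1)$ and $\varphi_1''(0)=(2/c_1^2)\sech(a_1/c_1)\tanh(a_1/c_1)$, reduces to the necessary identity $\lambda=3c_1\coth(a_1(\lambda)/c_1)$. In the range $\lambda\leqq-\tfrac{\pi}{2}c_1$ one has $a_1(\lambda)\geqq 0$ by subsection 3.2.1, so the right-hand side is strictly positive (or $+\infty$ at $a_1=0$), contradicting $\lambda<0$; hence $\ker(\mathcal{L}_\lambda|_{\mathcal{B}})=\{0\}$. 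For the lower bound $n(\mathcal{L}_\lambda|_{\mathcal{B}})\geqq 1$, I take $\Lambda_1=(\varphi_1',\varphi_2',\varphi_3')\in\mathcal{C}\cap H^1(\mathcal{Y})$ (in $\mathcal{C}$ since $\varphi_1'=\varphi_2'=\varphi_3'$). Inserting $\Lambda_1$ in the form \eqref{antiqua} and integrating by parts with the aid of $c_1^2\varphi_j'''=\cos(\varphi_j)\varphi_j'$ reduces the form to
\begin{equation*}
\mathcal{Q}_\lambda(\Lambda_1)=\frac{9c_1^2}{\lambda}[\varphi_1'(0)]^2-3c_1^2\varphi_1'(0)\varphi_1''(0),
\end{equation*}
which is strictly negative in the whole range: at $\lambda=-\tfrac{\pi}{2}c_1$ one has $\varphi_1''(0)=0$ and the first summand is negative, while for $\lambda<-\tfrac{\pi}{2}c_1$ both summands are negative since $a_1>0$ forces $\varphi_1'(0),\varphi_1''(0)>0$. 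The min-max principle then yields the bound.

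For the upper bound I would argue exactly as in Proposition \ref{2crystal}. Let $(T_0,D(T_0))$ be the closed symmetric scalar operator $T_0 u=-c_1^2u''+\cos(\varphi_1)u$ with minimal domain $D(T_0)=\{u\in H^2(0,\infty):u(0+)=u'(0+)=0\}$, which has deficiency indices $(1,1)$ and of which each $L_\lambda$ is a self-adjoint extension. Since $\varphi_1'$ is strictly positive on $[0,\infty)$, the Sturm factorization $L_\lambda u=-(\varphi_1')^{-1}\bigl(c_1^2(\varphi_1')^2(u/\varphi_1')'\bigr)'$ is well defined, and integration by parts together with $u(0+)=u'(0+)=0$ gives
\begin{equation*}
\langle T_0 u,u\rangle=\int_0^{+\infty} c_1^2(\varphi_1')^2\Bigl(\bigl(u/\varphi_1'\bigr)'\Bigr)^2 dx\geqq 0.
\end{equation*}
Hence $T_0\geqq 0$ and Proposition \ref{semibounded} forces every self-adjoint extension to have Morse index at most $1$, so $n(L_\lambda)\leqq 1$; combining with the lower bound yields $n(\mathcal{L}_\lambda|_{\mathcal{B}})=1$. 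The principal obstacle I anticipate is the clean transfer of the scalar spectral bound back to the vectorial $\mathcal{L}_\lambda|_{\mathcal{B}}$ and verifying that no hidden boundary contribution is lost at $x=0$ in the Sturm factorization; both rely on the explicit positivity $\varphi_1'(0)>0$ throughout the admissible parameter range.
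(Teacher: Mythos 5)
Your proof is correct, and for the Morse-index count it takes a genuinely different route from the paper's. The paper establishes $\ker(\mathcal{L}_{\lambda_0}|_{\mathcal B})=\{0\}$ and $n(\mathcal{L}_{\lambda_0}|_{\mathcal B})=1$ only at the endpoint $\lambda_0=-\tfrac{\pi}{2}c_1$ --- where the vectorial extension-theory bound of Proposition \ref{anti5main} applies because $\varphi_j''(0)=0$ there --- and then transports the count to all $\lambda<\lambda_0$ by the Kato/Riesz-projection continuation of Proposition \ref{antimain4}. You instead observe that $Uu=\tfrac{1}{\sqrt3}(u,u,u)$ unitarily identifies $\mathcal{L}_\lambda|_{\mathcal B}$ with a scalar Robin realization $L_\lambda$ on the half-line and run the extension-theory argument of Proposition \ref{2crystal} at the scalar level, where the minimal operator (domain $u(0)=u'(0)=0$) is nonnegative because the boundary term of the Sturm factorization vanishes identically, \emph{irrespective of the sign of} $\varphi_1''(0)$. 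This gives $n(\mathcal{L}_\lambda|_{\mathcal B})\leqq 1$ uniformly in $\lambda$ --- exactly the bound that is unavailable at the vectorial level when $a_1>0$ (cf. the analogous obstruction in Remark \ref{fora}) --- and, combined with your direct check that both summands of \eqref{2antiqua} are nonpositive and not simultaneously zero when $a_1\geqq 0$, it eliminates the continuation argument altogether and makes the proof uniform in $\lambda$. Your kernel computation ($\lambda=3c_1\coth(a_1/c_1)>0$ against $\lambda<0$ for $a_1>0$, and $\varphi_1''(0)=0\neq\varphi_1'(0)$ at $a_1=0$) is the scalar shadow of Proposition \ref{antimain} and is equally valid. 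The only steps worth making explicit are the standard facts that the scalar half-line minimal Schr\"odinger operator with bounded potential has deficiency indices $(1,1)$ (the scalar analogue of \eqref{tricry6}) and that the unitary equivalence transfers self-adjointness and eigenvalue counts verbatim.
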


\begin{proof} Initially, since $\varphi_1-2\pi=\varphi_2=\varphi_3$ and $\cos(\varphi_1) = \cos(\varphi_1-2\pi) = \cos(\varphi_2) = \cos(\varphi_3)$ follows immediately that, for $\bold u=(u_j)_{j=1}^3\in \mathcal B$, we have $ \mathcal{L}_{\lambda}\bold u\in \mathcal C$.

Next, from the proof of Proposition \ref{antimain} for $\lambda_0=-\frac{\pi}{2}c_1$ we have $\ker(\mathcal{L}_{\lambda_0} |_{\mathcal B} )=\{\bold 0\}$. Moreover, the anti-kink/kink profile for $\lambda_0$, $\Phi_0=(\varphi_{1, 0}, \varphi_{2,0}, \varphi_{2,0})$, satisfies that $\Phi'_0=(\varphi'_{1,0}, \varphi'_{2,0}, \varphi'_{2,0})\in \mathcal C\cap H^1(\mathcal Y)$. Then, from \eqref{2antiqua} we know that the quadratic form $Q_{\lambda_0}$ associated to $(\mathcal{L}_{\lambda_0}, \mathcal B)$ satisfies
$Q_{\lambda_0}(\Phi'_0)<0$. Therefore, $n( \mathcal{L}_{\lambda_0} |_{\mathcal B} )=1$. Hence, by using a similar strategy as in the proof 
of Proposition  \ref{antimain4} we obtain that $n( \mathcal{L}_{\lambda} |_{\mathcal B} )=1$ for $\lambda\in (-\infty, -\frac{\pi}{2}c_1)$. This finishes the proof.
\end{proof}

\begin{proof}[Proof of Theorem \ref{0antimain}] We consider initially the case $\lambda\in (-\frac{\pi}{2}c_1, +\infty)$. From Propositions \ref{antimain}, \ref{anti5main} and \ref{antimain4} we have $\ker( \mathcal{L}_\lambda)=\{0\}$ and $n( \mathcal{L}_\lambda)=1$. Moreover, from subsection \ref{akfunspa} and based in Proposition \ref{coro1} we verify Assumption $(S_1)$ for $J\mathcal E$ on $H^1(\mathcal Y)\times L^2(\mathcal Y)$ in the linear instability criterion in subsection 3.1. Thus, from Theorem \ref{crit} follows the linear instability property of the stationary anti-kink/kink soliton profile $\Pi_{\lambda, \delta'}$. 

For the case $\lambda\in (-\infty,-\frac{\pi}{2}c_1]$ we need to adjust our instability criterion in section 2 to the space $\mathcal R=(\mathcal C\cap H^1(\mathcal Y))\times \mathcal C$ determined by \eqref{akmodel}. Thus, initially we need to see that $J\mathcal E$ is the generator of a $C_0$-semigroup on the restricted subspace $\mathcal R$ (assumption $(S_1)$). Indeed, this is a consequence of the mapping $J\mathcal E: (\mathcal C\cap D(\mathcal L_\lambda))\times (\mathcal C\cap H^1(\mathcal Y))\to (\mathcal C\cap H^1(\mathcal Y))\times \mathcal C$ is well defined and  by using the same strategy as in Angulo and Plaza \cite{AnPl-deltaprime} (subsection 3.1.1 and Proposition 3.11). Now, from Proposition \ref{Appanti} and Theorem \ref{crit} we finish the proof.

\end{proof}


\section*{Acknowledgements}
J. Angulo  was supported in part by CNPq/Brazil Grant and by FAPERJ/Brazil program PRONEX-E - 26/010.001258/2016.  

\appendix
\section{Appendix }
\label{secApp}

For the sake of completeness, in this section we develop the extension theory of symmetric operators suitable for our needs. For further information on the subject the reader is referred to the monographs by Naimark \cite{Nai67,Nai68}. The following classical result, known as  \emph{the von-Neumann decomposition theorem}, can be found in \cite{RS1, RS2}.
 
\begin{theorem}\label{d5} 
Let $A$ be a closed, symmetric operator, then
\begin{equation}\label{d6}
D(A^*)=D(A)\oplus\mathcal N_{-i} \oplus\mathcal N_{+i}.
\end{equation}
with $\mathcal N_{\pm i}= \ker (A^*\mp iI)$. Therefore, for $u\in D(A^*)$ and $u=x+y+z\in D(A)\oplus\mathcal N_{-i} \oplus\mathcal N_{+i}$,
\begin{equation}\label{d6a}
A^*u=Ax+(-i)y+iz.
\end{equation}
\end{theorem}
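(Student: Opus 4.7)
The plan is to prove the decomposition $D(A^*)=D(A)\oplus\mathcal N_{-i}\oplus\mathcal N_{+i}$ in three stages: first, establish orthogonality of the three summands with respect to the graph inner product on $D(A^*)$ so that the sum is direct; second, verify that every $u\in D(A^*)$ admits such a decomposition by exploiting the orthogonal splitting of $H$ induced by $\operatorname{ran}(A+iI)$; and third, read off the action formula \eqref{d6a} for $A^*$ by linearity.

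First I would endow $D(A^*)$ with the graph inner product $\langle u,v\rangle_{+}:=\langle u,v\rangle+\langle A^*u,A^*v\rangle$, under which $D(A^*)$ is a Hilbert space (since $A^*$ is closed) and, using $A^*|_{D(A)}=A$, the subspace $D(A)$ is closed. A short computation based on the defining identity $\langle Ax,\eta\rangle=\langle x,A^*\eta\rangle$ for $x\in D(A)$, $\eta\in D(A^*)$, together with the eigen-relations $A^*y=-iy$ for $y\in\mathcal N_{-i}$ and $A^*z=iz$ for $z\in\mathcal N_{+i}$, gives $\langle x,y\rangle_{+}=\langle x,z\rangle_{+}=\langle y,z\rangle_{+}=0$. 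Consequently, the sum on the right-hand side of \eqref{d6} is direct inside $D(A^*)$.

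Next I would exhibit the decomposition. Because $A$ is closed and symmetric, the estimate $\|(A+iI)x\|^2=\|Ax\|^2+\|x\|^2\geq\|x\|^2$ forces $\operatorname{ran}(A+iI)$ to be closed in $H$, and the standard duality $\operatorname{ran}(A+iI)^{\perp}=\ker(A^*-iI)=\mathcal N_{+i}$ yields the orthogonal splitting $H=\operatorname{ran}(A+iI)\oplus\mathcal N_{+i}$. Given $u\in D(A^*)$, I would apply this splitting to the vector $(A^*+iI)u\in H$ to obtain $x\in D(A)$ and $v\in\mathcal N_{+i}$ with $(A^*+iI)u=(A+iI)x+v$. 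Setting $z:=-iv/2\in\mathcal N_{+i}$ and $y:=u-x-z$, a direct computation gives $(A^*+iI)y=0$, so $y\in\mathcal N_{-i}$, yielding $u=x+y+z$. The formula $A^*u=Ax+(-i)y+iz$ then follows immediately from linearity of $A^*$ together with $A^*|_{D(A)}=A$ and the eigen-relations on $\mathcal N_{\pm i}$. The only non-algebraic ingredient in the whole argument is the closedness of $\operatorname{ran}(A+iI)$ for closed symmetric $A$, which is the usual technical heart of the von Neumann theory and the one step I would verify with care.
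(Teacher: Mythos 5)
Your proof is correct and complete; it is the classical von Neumann argument (graph-inner-product orthogonality of the three summands, plus the splitting $H=\operatorname{ran}(A+iI)\oplus\mathcal N_{+i}$ coming from closedness of the range). The paper does not prove this statement but simply cites Reed--Simon, where essentially this same argument appears, so your route matches the intended one; note also that your graph-norm orthogonality is consistent with the paper's remark that the sum need not be orthogonal in the original inner product of $H$.
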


\begin{remark} The direct sum in (\ref{d6}) is not necessarily orthogonal.
\end{remark}

The following propositions provide a strategy for estimating the Morse-index of the self-adjoint extensions (see Naimark \cite{Nai68}).

\begin{proposition}\label{semibounded}
Let $A$  be a densely defined lower semi-bounded symmetric operator (that is, $A\geq mI$)  with finite deficiency indices, $n_{\pm}(A)=k<\infty$,  in the Hilbert space $\mathcal{H}$, and let $\widehat{A}$ be a self-adjoint extension of $A$.  Then the spectrum of $\widehat{A}$  in $(-\infty, m)$ is discrete and  consists of, at most, $k$  eigenvalues counting multiplicities.
\end{proposition}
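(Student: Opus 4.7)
The plan is to compare $\widehat{A}$ with the Friedrichs extension $A_F$, which is the distinguished self-adjoint extension preserving the lower bound, and then exploit the fact that the ``extra freedom'' in passing from $A$ to any self-adjoint extension is exactly $k$-dimensional by Theorem \ref{d5}.

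First I would invoke the Friedrichs construction: since $A\geq mI$, there exists a self-adjoint extension $A_F$ with $A_F\geq mI$, and therefore $\sigma(A_F)\subseteq[m,+\infty)$, in particular $\sigma_{\mathrm{ess}}(A_F)\subseteq[m,+\infty)$. Next I would transfer this essential-spectrum information to $\widehat{A}$. By the von Neumann decomposition \eqref{d6} together with the classical description of all self-adjoint extensions in terms of a unitary $U:\mathcal N_{+i}\to\mathcal N_{-i}$, both $D(A_F)$ and $D(\widehat{A})$ sit inside $D(A^*)=D(A)\oplus\mathcal N_{-i}\oplus\mathcal N_{+i}$, and each exceeds $D(A)$ by a subspace of dimension exactly $k$. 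A Krein-type resolvent formula---or a direct computation using \eqref{d6a}---then yields that for $\zeta\in\rho(\widehat{A})\cap\rho(A_F)$ the difference $(\widehat{A}-\zeta)^{-1}-(A_F-\zeta)^{-1}$ has rank at most $k$ and hence is compact. Weyl's theorem on the invariance of essential spectrum under compact resolvent perturbations gives $\sigma_{\mathrm{ess}}(\widehat{A})=\sigma_{\mathrm{ess}}(A_F)\subseteq[m,+\infty)$, so $\sigma(\widehat{A})\cap(-\infty,m)$ consists of isolated eigenvalues of finite multiplicity.

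It remains to bound the total multiplicity of those eigenvalues by $k$ via a dimension-counting argument. Suppose, for contradiction, that there exist $k+1$ orthonormal eigenvectors $\psi_1,\dots,\psi_{k+1}\in D(\widehat{A})$ with eigenvalues $\lambda_1,\dots,\lambda_{k+1}<m$ (orthogonalizing within each eigenspace if necessary), and set $V=\Span\{\psi_1,\dots,\psi_{k+1}\}$. Writing $D(\widehat{A})=D(A)\dotplus M_U$ where $M_U$ is the graph of the parametrizing unitary $U$ and has dimension $k$, the restriction to $V$ of the projection $D(\widehat{A})\to M_U$ has kernel of dimension at least $(k+1)-k=1$, so there exists $0\neq u\in V\cap D(A)$. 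Expanding $u=\sum c_i\psi_i$ gives $\langle\widehat{A}u,u\rangle=\sum\lambda_i|c_i|^2<m\|u\|^2$, while $u\in D(A)$ forces $\langle\widehat{A}u,u\rangle=\langle Au,u\rangle\geq m\|u\|^2$, a contradiction.

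The hard part---and the only place where real care is required---is establishing the rank-$k$ bound for the resolvent difference of two self-adjoint extensions, which is the bridge from the abstract extension theory of Theorem \ref{d5} to the spectral conclusions. Once that is in hand, the essential-spectrum inclusion follows from Weyl's theorem and the eigenvalue count follows from a purely linear-algebraic dimension argument inside the $2k$-dimensional quotient $D(A^*)/D(A)$.
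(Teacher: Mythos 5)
Your argument is correct. Note that the paper does not actually prove this proposition: it is stated as a classical fact and attributed to Naimark \cite{Nai68}, so your write-up supplies a proof where the authors only give a citation. All three ingredients check out: the rank bound on the resolvent difference is the standard Krein-type observation that $(\widehat{A}-\zeta)^{-1}$ and $(A_F-\zeta)^{-1}$ agree on $\operatorname{ran}(A-\zeta)$, whose orthogonal complement $\ker(A^*-\bar\zeta)$ has dimension $k$ for non-real $\zeta$; Weyl's theorem then localizes $\sigma_{\mathrm{ess}}(\widehat{A})$ in $[m,+\infty)$; and the dimension count in $D(\widehat{A})=D(A)\dotplus M_U$ is exactly right. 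One remark worth making: the first two steps are actually dispensable. If you run your final quadratic-form argument not on a system of $k+1$ orthonormal eigenvectors but on an arbitrary $(k+1)$-dimensional subspace $V$ of the range of the spectral projection $E_{\widehat{A}}\bigl((-\infty,m-\epsilon]\bigr)$, the same contradiction ($m\|u\|^2\leq\langle Au,u\rangle=\langle\widehat{A}u,u\rangle\leq(m-\epsilon)\|u\|^2$ for some $0\neq u\in V\cap D(A)$) shows directly that this projection has rank at most $k$ for every $\epsilon>0$; this simultaneously rules out essential spectrum below $m$ and delivers the eigenvalue count, with no need for the Friedrichs extension or the resolvent comparison. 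Your longer route is nonetheless valid and has the merit of isolating the rank-$k$ resolvent formula, which is of independent use.
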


\begin{proposition}\label{11}
	Let $A$ be a densely defined, closed, symmetric operator in some Hilbert space $H$ with deficiency indices equal  $n_{\pm}(A)=1$. All self-adjoint extensions $A_\theta$ of $A$ may be parametrized by a real parameter $\theta\in [0,2\pi)$ where
	\begin{equation*}
	\begin{split}
	D(A_\theta)&=\{x+c\phi_+ + \zeta e^{i\theta}\phi_{-}: x\in D(A), \zeta \in \mathbb C\},\\
	A_\theta (x + \zeta \phi_+ + \zeta e^{i\theta}\phi_{-})&= Ax+i \zeta \phi_+ - i \zeta e^{i\theta}\phi_{-},
	\end{split}
	\end{equation*}
	with $A^*\phi_{\pm}=\pm i \phi_{\pm}$, and $\|\phi_+\|=\|\phi_-\|$.
\end{proposition}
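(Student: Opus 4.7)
The plan is to exploit the von Neumann decomposition (Theorem \ref{d5}) already at our disposal, and to translate the problem of finding self-adjoint extensions of $A$ into that of classifying unitary isomorphisms between the two deficiency subspaces $\mathcal N_{\pm i}$. Since $n_\pm(A)=1$, these deficiency subspaces are one-dimensional and spanned by prescribed unit (or equal-norm) elements $\phi_\pm$ satisfying $A^\ast\phi_\pm=\pm i\phi_\pm$, and the final parametrization then reduces to choosing a unit scalar $e^{i\theta}\in\mathbb S^1$.

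The first step is to record the \emph{boundary form} of $A^\ast$. For arbitrary $u=x_u+\alpha_+\phi_+ +\alpha_-\phi_-$ and $v=x_v+\beta_+\phi_+ +\beta_-\phi_-$ in $D(A^\ast)$, with $x_u,x_v\in D(A)$, a direct computation using Theorem \ref{d5} together with the symmetry of $A$ and the Hermitian conjugation yields the identity
\begin{equation*}
\langle A^\ast u,v\rangle-\langle u,A^\ast v\rangle=2i\bigl(\alpha_-\overline{\beta_-}\|\phi_-\|^2-\alpha_+\overline{\beta_+}\|\phi_+\|^2\bigr).
\end{equation*}
Because $\|\phi_+\|=\|\phi_-\|$, this is exactly $2i(\langle\alpha_-\phi_-,\beta_-\phi_-\rangle-\langle\alpha_+\phi_+,\beta_+\phi_+\rangle)$, so the boundary form is the natural symplectic form transported from $\mathcal N_{+i}\oplus\mathcal N_{-i}$.

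Next, a closed extension $\tilde A$ of $A$ with $\tilde A\subset A^\ast$ is symmetric if and only if the boundary form vanishes identically on $D(\tilde A)\times D(\tilde A)$; it is self-adjoint if and only if $D(\tilde A)$ is a maximal such subspace (i.e.\ Lagrangian for this boundary form). Writing $D(\tilde A)=D(A)\oplus G$ for some subspace $G\subset\mathcal N_{+i}\oplus\mathcal N_{-i}$, the Lagrangian condition forces $G$ to be the graph of an isometry $U:\mathcal N_{+i}\to\mathcal N_{-i}$, that is, $G=\{y+Uy:\,y\in\mathcal N_{+i}\}$ with $\|Uy\|=\|y\|$. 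This is the classical von Neumann correspondence, and I would justify it by verifying both directions: (i) any such graph is isotropic and maximal; (ii) any maximal isotropic subspace meets each $\mathcal N_{\pm i}$ only at zero and projects bijectively onto $\mathcal N_{+i}$.

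Finally, I would specialize to $n_\pm(A)=1$. Since $\mathcal N_{\pm i}=\mathbb C\phi_\pm$ with $\|\phi_+\|=\|\phi_-\|$, any isometry $U:\mathcal N_{+i}\to\mathcal N_{-i}$ is determined by a unimodular scalar, $U(\zeta\phi_+)=\zeta e^{i\theta}\phi_-$ with $\theta\in[0,2\pi)$. This gives the stated domain $D(A_\theta)=\{x+\zeta\phi_++\zeta e^{i\theta}\phi_-:x\in D(A),\zeta\in\mathbb C\}$, and the action formula $A_\theta(x+\zeta\phi_++\zeta e^{i\theta}\phi_-)=Ax+i\zeta\phi_+-i\zeta e^{i\theta}\phi_-$ is then an immediate specialization of \eqref{d6a}. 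The main technical obstacle is the Lagrangian/maximality step: verifying that the subspace generated by $\phi_++e^{i\theta}\phi_-$ is not merely isotropic (so $A_\theta$ is symmetric) but actually yields $D(A_\theta^\ast)=D(A_\theta)$, which requires a careful count of dimensions quotiented by $D(A)$ together with the Cayley-transform interpretation to rule out any strictly larger symmetric extension inside $A^\ast$.
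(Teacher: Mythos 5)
Your proposal is correct and follows precisely the classical von Neumann extension-theory argument; the paper itself gives no proof of Proposition \ref{11}, simply citing Naimark \cite{Nai68} together with the decomposition \eqref{d6}--\eqref{d6a}, which is exactly the machinery you deploy (boundary form on $D(A^*)$, maximal isotropic subspaces as graphs of isometries $\mathcal N_{+i}\to\mathcal N_{-i}$, reduction to a unimodular scalar when $n_\pm(A)=1$). Two cosmetic remarks: with the paper's convention $(u,v)=\int u\overline{v}\,dx$ the boundary form comes out as $2i\bigl(\alpha_+\overline{\beta_+}\|\phi_+\|^2-\alpha_-\overline{\beta_-}\|\phi_-\|^2\bigr)$, the negative of what you wrote (harmless, since only its vanishing matters), and the maximality step you flag as the main obstacle is settled at once by observing that this form is nondegenerate of signature $(1,1)$ on the two-dimensional quotient $D(A^*)/D(A)$, so the one-dimensional isotropic line spanned by $\phi_++e^{i\theta}\phi_-$ is automatically maximal isotropic and the resulting extension is self-adjoint rather than merely symmetric.
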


\def\cprime{$'\!\!$} \def\cprimel{$'\!$}



\end{document}